\newcommand{\LL}{\Lambda} 
\newcommand{\LLo}{\Lambda_0} 
\newcommand{\LLp}{\Lambda_+} 
\newcommand{\M}{\mathcal M} 
\newcommand{\cM}{\overline{\mathcal M}} 
\newcommand{\lag}{\mathcal{L}} 
\newcommand{\g}{\frak{g}} 
\newcommand{\bb}{\frak{b}} 
\newcommand{\xx}{{\underline{X}}} 
\newcommand{\F}{\mathcal{F}} 
\newcommand{\ob}{\operatorname{Ob}} 
\newcommand{\crit}{\operatorname{Crit}} 
\newcommand{\mf}{\operatorname{MF}} 
\newcommand{\br}{\operatorname{Br}} 
\newcommand{\ev}{\operatorname{ev}} 
\newcommand{\m}{\frak{m}} 
\newcommand{\val}{\operatorname{val}} 
\newcommand{\bcar}{\mathrm{Car}} 
  \newtheorem{theorem}{Theorem}[section]
  \newtheorem{definition}[theorem]{Definition}
  \newtheorem{remark}[theorem]{Remark} 
  \newtheorem{lemma}[theorem]{Lemma}
  \newtheorem{proposition}[theorem]{Proposition}
  \newtheorem{corollary}[theorem]{Corollary}
  \newtheorem{assumption}[theorem]{Assumption}
\newcommand{\Z}{\mathbb{Z}}
\newcommand{\C}{\mathbb{C}}
\newcommand{\R}{\mathbb{R}}
\newcommand{\mr}{\mathrm{m}}
\newcommand{\aaa}{\mathfrak{a}}
\newcommand{\an}{\mathrm{an}}
\newcommand{\ano}{\mathcal{O}}
\newcommand{\bv}{\vec{b}}
\newcommand{\ini}{\mathrm{in}}
\newcommand{\Hom}{\mathrm{Hom}}
\newcommand{\Spec}{\mathrm{Spec}}
\newcommand{\Spm}{\mathrm{Spm}}
\newcommand{\lan}[1]{\langle #1 \rangle}
\newcommand{\trop}{\mathrm{trop}}
\newcommand{\Trop}{\mathrm{Trop}}
\newcommand{\inte}{{\mathrm{int}}}
\newcommand{\uP}{\mathcal{U}_P}
\newcommand{\pr}{\operatorname{pr}}
\newcommand{\perf}{\mathrm{perf}}
\newcommand{\pe}{\mathrm{pe}}
\newcommand{\stab}{\mathrm{stab}}
\newcommand{\sing}{\mathrm{Sing}}
\newcommand{\cl}{Cl}
\newcommand{\op}{\mathrm{op}}
\newcommand{\module}{\mathrm{mod}}
\newcommand{\uotimes}{\, \underset{\C}{\otimes}\, }
\newcommand{\sg}{\wedge \g ^\vee}
\newcommand{\Sym}{\mathrm{Sym}}
\newcommand{\A}{\mathcal{A}} 
\newcommand{\cA}{{\overline{\mathcal{A}}}} 
\newcommand{\cF}{\overline{\mathcal{F}}} 
\newcommand{\hhom}{\mathcal{H}\mathit{om}}
\newcommand{\cZ}{\mathbb{Z}} 
\newcommand{\cR}{\mathbb{R}} 
\newcommand{\Lag}{\mathbb{L}}
\begin{document}
\title{Equivariant Homological Mirror Symmetry for $\C$ and $\C P^1$}
\author{Masahiro Futaki}
\address{Department of Mathematics and Informatics, Graduate School of Science, Chiba University, 1-33, Yayoicho, Inage-ku, Chiba-shi, Chiba, 263-8522 Japan}
\email{futaki@math.s.chiba-u.ac.jp}
\author{Fumihiko Sanda}
\address{Department of Mathematics, Graduate School of Science, Kyoto University, Kitashirakawa-Oiwake-cho, Sakyo-ku, Kyoto, 606-8502, Japan}
\email{sanda.fumihiko.5s@kyoto-u.ac.jp}
\date{\today}

\begin{abstract}
In this paper we define an equivariant Floer $A_\infty$ algebra 
for $\C$ and $\C P^1$ 
by using Cartan model.
We then prove an equivariant homological mirror symmetry, 
i.e.\ an equivalence between an $A_\infty$ category of equivariant Lagrangian branes 
and the category of matrix factorizations of Givental's equivariant Landau-Ginzburg potential function.
\end{abstract}
\maketitle
\tableofcontents

\renewcommand{\thefootnote}{}
\footnote{
M.~F.\ is supported by Grant-in-Aid for Scientific Research 
(C) (18K03269) 
of the Japan Society for the Promotion of Science. 
F.~S.\ is partially supported by Grant-in-Aid for Scientific Research 
(B) (17K17817), 
(S) (16H06337), 
(S) (21H04994) 
of the Japan Society for the Promotion of Science.
}

\section{Introduction}

One form of the homological mirror symmetry for a toric Fano manifold $X$ relates 
the Fukaya category of $X$ to the dg-category of matrix factorizations 
of its Landau-Ginzburg mirror $F_X$, e.g.\ \cite{CHLtor} \cite{ELGmfd}.
In this paper we study an equivariant version of this story.

Givental introduced the equivariant Landau-Ginzburg potential $F_X$ mirror to 
the toric manifold $X$, 
e.g.\ \cite{giv}.
The potential is of the form
\begin{align}
F_X &= f_X - \lambda_1 \log g_1 - \cdots - \lambda_r \log g_r , \nonumber
\end{align}
where $f_X$ is the non-equivariant Landau-Ginzburg mirror potential of $X$ and 
$\lambda_1 ,..., \lambda_r$ are equivariant parameters 
corresponding to a basis of the Lie algebra of the torus $T$ acting on $X$, 
and $g_1,\ldots ,g_r$ are some invertible functions.

Cho-Oh \cite{CO} defined a potential function by using Floer theory and showed that 
it coincides with the (nonequivariant) Landau-Ginzburg potential 
in the case of compact toric Fano manifolds 
based on the idea of Hori-Vafa \cite{HV}, 
cf \cite{MR2057871} \cite{fooo:toric1}.
Kim-Lau-Zheng \cite{KLZpot} showed that Givental's equivariant Landau-Ginzburg mirror 
is recovered from the $T$-equivariant potential function  
when $X$ is a semi-projective semi-Fano toric manifold.

The equivariant Fukaya category is yet to be defined, 
but should consist of the Lagrangians 
preserved by the group action 
and of the equivariant Floer $A_\infty$ algebras.

There have been several approaches to the equivariant Floer theory.
Zernik \cite{zerequ, zerloc} defined an equivariant Floer cohomology 
for possibly nonorientable Lagrangians using the Cartan model, 
and applied it to the study of the open Gromov-Witten theory 
of the real projective space inside the complex projective space.
Kim-Lau-Zheng mentioned above employed the Morse model 
to study the disc potential.

In this paper we introduce our version of the equivariant Floer $A_\infty$ algebra 
using the Cartan model, 
and prove the following.
\vspace{5pt}

\begin{theorem}[Section \ref{equivhms}, 
Theorems \ref{maintheorem1} and \ref{maintheorem2}]

Let $X$ be $\C P^1$ or $\C$ with the standard $S^1$ action.
We have a cohomologically fully faithful $A_\infty$ functor from $\F_X$ to $\br(F_X)$ 
whose image split-generates the triangulated category $[\br(F_X)]$, 
where $\F_X$ is an equivariant Floer $A_\infty$ category 
and $\br(F_X)$ is the dg-category of matrix factorizations of $F_X$.
\end{theorem}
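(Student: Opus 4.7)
The plan is to follow the toric homological mirror symmetry framework of Cho-Hong-Lau, adapted to the equivariant setting via the Cartan model. On the A-side, the objects of $\F_X$ should be the $S^1$-invariant Lagrangians of $X$, namely the moment-map orbits $L_u = \{|z|^2 = 2u\}$, together with bounding cochains/holonomy parameters $b$. For $X=\C$ they form a one-parameter family, and for $X=\C P^1$ they are parameterized by $u$ in the interior of the moment polytope. On the B-side, $\br(F_X)$ is governed by the critical points of Givental's equivariant potential $F_X$: one effective critical point for $X=\C$ and two for $X=\C P^1$.

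First I would compute the equivariant Floer $A_\infty$ algebra of $(L_u,b)$ in the Cartan model. The underlying module is the equivariant cohomology $H^*_{S^1}(L_u) \cong \R[\lambda]\langle \mathbf{1}, e \rangle$, free of rank two over $\R[\lambda]$ with $e$ in degree $1$. The operations $m_k^b$ count $S^1$-equivariant holomorphic discs in the Cartan sense, refining the Cho-Oh counts with extra contributions weighted by powers of $\lambda$. The deformed curvature $m_0^b$ is then a multiple $\po(b,\lambda)\cdot \mathbf{1}$ of the unit, and the equivariant disc-counting formula of Kim-Lau-Zheng, specialized to these two examples, identifies $\po(b,\lambda)$ with $F_X(b,\lambda)$ after a change of coordinates.

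Next I would construct the $A_\infty$ functor $\Phi : \F_X \to \br(F_X)$ by sending $(L_u,b)$ to the matrix factorization whose underlying $\Z/2$-graded module is $H^*_{S^1}(L_u)$ with differential $m_1^{b,b}$. The $A_\infty$ relations force $m_1^{b,b} \circ m_1^{b,b} = F_X(b,\lambda)\cdot \id$ (modulo a reference value), so this is a genuine matrix factorization of $F_X$; higher components of $\Phi$ are built from the $m_k^{b,b}$'s in the usual Yoneda style. Cohomological full faithfulness is then checked by matching $\Hom$-spaces: at a critical bounding cochain $b_\ast$ the endomorphism algebra of $\Phi(L_u,b_\ast)$ in $\br(F_X)$ is the Jacobi ring of $F_X$ localized at $b_\ast$, which should coincide with the equivariant Floer cohomology $HF^*_{S^1}((L_u,b_\ast),(L_u,b_\ast))$ via a Kodaira-Spencer-type identification. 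Split-generation of $[\br(F_X)]$ follows once the $\Phi(L_u,b_\ast)$ cover all critical points of $F_X$: for $\C P^1$ both roots of the critical equation are realized by choosing $b_\ast$ appropriately, and for $\C$ the analogous statement reduces to a single, explicitly computable object.

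The main obstacle I anticipate is in the first step: establishing that the Cartan model produces an honest $A_\infty$ structure over $\R[\lambda]$ whose curvature reproduces Givental's logarithmic terms $-\lambda \log g$. The logarithmic dependence on $\lambda$ is a genuinely equivariant phenomenon invisible in the non-equivariant Cho-Oh count and must arise from resumming infinitely many equivariant disc configurations in the Cartan model. A secondary difficulty is the non-compactness of $X=\C$, where the standard compactness arguments for disc moduli do not apply directly; this will likely need either a direct computation using the explicit $S^1$-action or a limiting argument from the computation on $\C P^1$.
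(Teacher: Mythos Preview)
Your proposed route via a Cho--Hong--Lau localized mirror functor is different from the paper's, which instead computes the equivariant Floer $A_\infty$ algebra of each $(L_u,\bb)$ at a critical bounding cochain directly: one finds $\m_1^{\bb,\lambda}=0$, and a divisor-axiom computation gives $2\m_2^{\bb,\lambda}(\mathbf{e}^1,\mathbf{e}^1)=\partial^2 F(\m)$, identifying the cohomology algebra with the Clifford algebra $\cl(-\hat F_\m)$. Sheridan's intrinsic formality upgrades this to an $A_\infty$ quasi-isomorphism, and Dyckerhoff's theorem realizes $\cl(-\hat F_\m)$ as the endomorphism algebra of the split-generator $\Lambda^{\stab}_\m\in\mf(\hat F_\m)$. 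The functor simply sends $(L_u,\bb)\mapsto\Lambda^{\stab}_\m$; no Yoneda-type family construction is needed in dimension one. Your approach could in principle be made to work and would generalize better to higher-dimensional toric targets, but for $\C$ and $\C P^1$ the paper's Clifford-algebra shortcut is considerably more direct.

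Your anticipated main obstacle, however, rests on a misconception. In the Cartan model the logarithmic term $-\lambda\log x$ does \emph{not} come from resumming equivariant disc configurations: the equivariant differential is $\m_1^\lambda=\m_1-\lambda\,i_{e_1}$, so the curvature of the $\bb$-deformed algebra acquires the single algebraic contribution $-\lambda\,i_{e_1}(\bb)=-\lambda\langle\bb,e_1\rangle$, and with $\bb=b\,\mathbf{e}^1$ and $x=e^b$ this is exactly $-\lambda\log x$. You may be thinking of the Morse model of Kim--Lau--Zheng, where the analogous term does arise from trajectory contributions; in the de Rham/Cartan model it is a one-line consequence of the interior product. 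Likewise, non-compactness of $\C$ is dispatched by the maximum principle: every holomorphic disc bounded by $L_u$ lies in a fixed compact set, so the moduli spaces behave as in the closed case. A smaller slip in your sketch: $(m_1^{b,b})^2=0$, not $F_X(b,\lambda)\cdot\id$, since the curvatures on the two sides cancel; a CHL-type functor needs a fixed reference bounding cochain on one side, or $b$ as a formal variable, to produce a matrix factorization of $F_X$ rather than of $0$.
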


\noindent
For precise statements, see Section \ref{equivhms}.

This paper is organized as follows.
In Section \ref{algprelim}, 
we review algebraic notions: 
$\g$-differential spaces and its $A_\infty$ version, 
the Cartan models and the gapped filtered $A_\infty$ categories.
In Section \ref{equivfloer}, 
we review some Floer theory, and then define 
an equivariant Floer $A_\infty$ algebra 
and an $A_\infty$ category of equivariant Lagrangian branes.
In Section \ref{mf}, 
we define the dg-category of matrix factorizations 
for Givental type potential functions.
In Section \ref{equivhms}, 
we formulate and prove the main theorems.
In Appendix A, we compute the dimension of the Jacobian ring of the equivariant Landau-Ginzburg mirror of a semi-projective toric manifold (Theorem \ref{maintheorem}).
In Appendix B, we introduce another version of  categories of branes for equivariant Landau-Ginzburg potentials.
\vspace{5pt}

\noindent 
{\bf Notations.} \\
$\LL := \{ \sum_{i \geq 0} a_i T^{\lambda_i} | 
a_i \in \C, 
\lambda_0 < \lambda_1 < \cdots \in \R, 
\lim_{i\to\infty}\lambda_i = \infty \}$ the Novikov field over $\C$, \\
$\LLo := \{ \sum_{i \geq 0} a_i T^{\lambda_i} \in \LL | 
\lambda_i \geq 0 \} 
\subset \LL$ the Novikov ring, 
and\\
$\LLp := \{ \sum_{i \geq 0} a_i T^{\lambda_i} \in \LL | 
\lambda_i > 0 \} \subset \LLo$ the maximal ideal of the Novikov ring. \\
We define the valuation of elements of the Novikov field by its $T$-exponent 
of the initial term. 
Namely, 
$\val (a) := \lambda_0$ for $a=\sum_{i \geq 0} a_i T^{\lambda_i} \in \LL^\ast$ 
with $a_0 \neq 0$ and set $\val(0) := \infty$.\\
We also use the following notations:\\
$\hat\uotimes \, $ : the $\Z/2\Z$-graded completed tensor product over $\C$.\\
$|x|$ : the degree of a homogeneous element of a graded module.\\
$|x|' := |x| -1$ : the shifted degree.\\
$\Spec$ : the set of prime ideals with Zariski topology.\\
$\Spm$ : the set of maximal ideals with Zariski or analytic topology.\\
$\mathrm{Sym}$ : the graded symmetric tensor product.\\
$[1]$ : the degree one shift of a graded module, i.e., for a graded module $V$,  the degree $k$ part $({V[1]})^k$ is $V^{k+1}$.

\section{Algebraic preliminaries}\label{algprelim}

In this section, let $(\g, [ \cdot, \cdot])$ be a complex Lie algebra (considered as a $\Z$-graded Lie algebra concentrated in degree $0$).
\subsection{Equivariant cohomology of $\g$-differential spaces}\label{sub 2.1}
In this subsection, we recall some basics on de Rham models of equivariant cohomology.
See, e.g., \cite{guillemin} for more details.
Let $(M,\delta)$ be a cochain complex, i.e., $M$ is a $\Z$-graded vector space over $\C$ and $\delta$ is a degree $1$ endmorphism of $M$ with $\delta^2=0$.
\begin{definition}
Let $(M, \delta)$ be a cochain complex.
Suppose that two linear maps $i$ and $L$ of degree $-1$ and $0$ respectively are given
\begin{align*}
\g \underset{\C}{\otimes} M &\to M : X\otimes x  \mapsto i_X x\\
\g \underset{\C}{\otimes} M &\to M : X\otimes x  \mapsto L_X x
\end{align*}
which satisfy the following $(X, Y \in \g)$: 
\begin{align*}
\delta L_X-L_X \delta&=0, \\
\delta i_X + i_X \delta&=L_X, \\
L_X  L_Y-L_Y L_X&=L_{[X, Y]}, \\
L_X i_Y-i_YL_X&=i_{[X, Y]}, \\
i_X i_Y+i_Yi_X&=0.
\end{align*}
Then $(M, \delta, L, i)$ is called a $\g$-differential space $($we denote it briefly by $M)$:
\end{definition}
For $\g$-differential spaces $M, N$, we easily see that $M \underset{\C}{\otimes} N$ is naturally a $\g$-differential space.
For a $\g$-differential space $M$, we set 
\begin{align*}
{M}^\g&:=\{x \in M \mid L_Xx=0 \ \text{for all}\ X \in \g\},\\
{M}_\mathrm{hor}&:=\{x \in M \mid i_Xx=0 \ \text{for all}\ X \in \g\}, \\
{M}_\mathrm{bas}&:=\{x \in M \mid L_Xx=0, i_Xx=0 \ \text{for all}\ X \in \g\}.
\end{align*}
Then $M_\mathrm{hor}$ is closed under $L_X$, and $M^\g$ and $M_\mathrm{bas}$ are closed under $\delta.$
For a $\C$-linear subspace $N \subseteq M,$ the intersection $N \cap M^\g$ is also denoted by $N^\g$. 

Let $\wedge \g$ be the ($\Z$-graded) symmetric algebra $\mathrm{Sym}(\g [1])$, $\wedge \g^\vee$ be $\mathrm{Sym}(\g^\vee[-1])$, and $S\g^\vee$ be $\mathrm{Sym}(\g^\vee[-2])$.
Choose a basis $e_1, e_2, \dots, e_r$ of $\g$ and let $e^1, e^2, \dots, e^r$ be the dual basis of $\g^\vee$. 
Let $c^i_{j k}$ be the structure constants of $\g$, i.e., $c^i_{j k}:=\lan{e^i, [e_j, e_k]}$.
There exists a natural non-degenerate bilinear paring $\lan{\cdot, \cdot} : \sg \uotimes \wedge \g \to \C$. 
For $X_1, \dots, X_k \in \g, x_1, \dots , x_k \in \g^\vee$, 
we have
\[\lan{x^1\wedge \cdots \wedge x^k, X_1\wedge \cdots \wedge X_k}
=\mathrm{det}((\lan{x^i, X_j})_{i, j=1}^k).\] 
The element $e^i \in \g ^\vee$ is also denoted by $\theta^i$ if it is considered as an element of $\sg$. 

We introduce a structure of a $\g$-differential space on $\sg$.
We define 
\[ \delta \colon \wedge \g^\vee \to \wedge \g^\vee \]
by the formula 
$\delta (\theta^i)=-\frac{1}{2}\sum_{j, k}c^i_{j k} \theta^j \theta^k$.
(Note that this $\delta$ comes from the dual of the Lie bracket.)

For $X, Y \in \g$ and $x \in \g^\vee$, set 
\[\lan{L_Xx, Y}:=-\lan{x, [X, Y]}, \ i_Xx=\lan{X, x}.\]
Extending $L_X$ and $i_X$ by the Leibniz rule, we obtain two linear maps \[L_X, i_X : \sg \to \sg.\]
We see that $(\sg, \delta, L, i)$ is a $\g$-differential space.
We note that $\delta$ is also written as $\frac{1}{2}\sum_i \theta^i \circ L_{e_i}$, where $\theta^i$ is the left multiplication by $\theta^i.$ 

We next introduce the Weil algebra.
Set $W\g:=S \g^\vee \underset{\C}{\otimes} \wedge \g^\vee,$ which is naturally a $\Z$-graded commutative algebra.
For simplicity, $1 \otimes e^i \in W\g$ is also denoted by $\theta^i$ and $e^i \otimes 1 \in W\g$ is denoted by $\overline{e}^i$.
Set $F^i:=\overline{e}^i+\frac{1}{2}\sum_{j, k}c^i_{j k} \theta^j \theta^k$,
then $\theta^1, \dots, \theta^r, F^1, \dots, F^r$ also generate $W\g$.
We note that $|\theta ^i|=1, |\overline{e}^i|=2, $ and $|F^i|=2$.
We define 
\[\delta_\mathrm{W}(\theta^i)=\overline{e}^i=F^i-\frac{1}{2} \sum_{j, k} c^i_{j k} \theta^j \theta^k \in W \g, \ 
  \delta_\mathrm{W}(\overline{e}^i)=0,\]
and extend it to $\delta_\mathrm{W} : W \g \to W \g$ by the Leibniz rule.
From the definition, we easily see that $(W\g, \delta_\mathrm{W})$ is acyclic.
By using the Jacobi identity, we have 
\[\delta_\mathrm{W}(F^i)=\sum_{j, k} c^i_{j k} F^j \theta^k.\]
Similarly, we define two linear maps $L, i : \g \underset{\C}{\otimes} W \g \to W \g$ by
\begin{align*}
i_{e_j}(\theta^i)&:=\delta^i_j, \hspace{1.85cm} i_{e_j}(F^i):=0, \\
L_{e_j}(\theta^i)&:=-\sum_k c^i_{j k}\theta^k, \ L_{e_j}(F^i):=-\sum_kc^i_{j k}F^k.
\end{align*}
Then $(W\g, \delta_\mathrm{W}, L, i)$ gives a $\g$-differential space, which is called a Weil algebra.
By definition,  we see that $(W \g)_\mathrm{hor} \cong S \g^\vee,$ 
which is freely generated by $F^1, F^2, \dots, F^r \in (W \g)_\mathrm{hor}$ and closed under $L_X$.

Let $M$ be a $\g$-differential space.
Then $M \underset{\C}{\otimes} W\g$ is also a $\g$-differential space and its differential is also denoted by $\delta_\mathrm{W}$.
Set $C^*_\mathrm{W}(M):=(M \underset{\C}{\otimes} W\g)_\mathrm{bas}.$
This is a $\Z$-graded $(W \g)_\mathrm{bas} \cong (S\g^\vee)^\g$-module and $\delta_\mathrm{W}$ is compatible with this module structure.
\begin{definition}\label{Weil}
The cochain complex $(C^*_\mathrm{W}(M), \delta_\mathrm{W})$ is called a Weil model
and its cohomology is called an equivariant cohomology of $M$.
The equivariant cohomology is an $(S\g^\vee)^\g$-module which is denoted by $H^*_\g(M)$.
\end{definition}

We next define the Cartan model of equivariant cohomology of a $\g$-differential space $M$.
Set $C^*_\mathrm{Car}(M):=(M \underset{\C}{\otimes}(W \g)_\mathrm{hor})^\g, $ which is a $\Z$-graded $(S \g^\vee)^\g$-module.
To simplify notation, for $x \in W \g$, the left multiplication by $x$ is also denoted by $x$.
We define a differential $\delta_\mathrm{Car} : C^*_\mathrm{Car}(M) \to C^*_\mathrm{Car}(M)$ by 
\[\delta_\mathrm{Car}:=\delta \otimes 1-\sum_i i_{e_i}\otimes F^i,\]
which is compatible with the $\Z$-graded $(S\g^\vee)^\g$-module structure.
\begin{definition}\label{CAR} 
The cochain complex $(C^*_\mathrm{Car}(M), \delta_\mathrm{Car})$ is called a Cartan model.
\end{definition}
Set $\gamma:=\sum_j i_{e_j} \otimes \theta^j \in \mathrm{End}(M \underset{\C}{\otimes} W\g).$
This is a degree $0$ nilpotent operator and we define an automorphism $\phi:=\mathrm{exp}(\gamma)$ (called a Mathai-Quillen morphism).
This morphism $\phi$ is compatible with the $\Z$-graded $W\g$-module structure on $M \uotimes W\g$.
\begin{theorem}[See, e.g., {\cite[Theorem 3.2]{kalkman}} and {\cite[Chapter 4]{guillemin}}]\label{KAL}
The image of the Weil model $C^*_\mathrm{W}(M)$ by $\phi$ is $C^*_\mathrm{Car}(M)$
and $\delta_\mathrm{Car} \circ \phi=\phi \circ \delta_\mathrm{W}$.
Hence $\phi$ gives an isomorphism between the equivariant cohomology $H^*_\g(M)$ and the cohomology of the Cartan model as $(S\g^\vee)^\g$-modules.
\end{theorem}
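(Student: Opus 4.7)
The plan is to verify the theorem by a direct computation, showing that the Mathai--Quillen morphism $\phi = \exp(\gamma)$ is an invertible operator on $M \otimes W\g$ that (i) maps the basic subspace $(M \otimes W\g)_\mathrm{bas}$ onto $(M \otimes (W\g)_\mathrm{hor})^\g$ and (ii) intertwines the two differentials. Since $\gamma = \sum_j i_{e_j} \otimes \theta^j$ is a finite sum whose building blocks anticommute (by $i_X i_Y + i_Y i_X = 0$ on $M$ and $\theta^i \theta^j + \theta^j \theta^i = 0$ in $W\g$), any product of more than $r = \dim \g$ of these factors contains a repeated $\theta^j$ and hence vanishes; thus $\gamma^{r+1} = 0$ and $\phi$ is a polynomial in $\gamma$ with inverse $\exp(-\gamma)$. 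Its $(S\g^\vee)^\g$-linearity is immediate from $i_{e_j}(F^i) = 0$ and the graded commutativity of $\theta^j$ with $F^i$ in $W\g$, so $\phi$ commutes with multiplication by each $F^i$.

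The heart of the proof is two conjugation identities. First, for invariance one verifies $[\tilde{L}_X, \gamma] = 0$, where $\tilde{L}_X := L_X \otimes 1 + 1 \otimes L_X$: the two contributions $i_{[X, e_j]} \otimes \theta^j$ (from $[L_X, i_{e_j}] = i_{[X, e_j]}$ on $M$) and $i_{e_j} \otimes L_X(\theta^j)$ (from the coadjoint action on $W\g$) cancel by the definition of $L_X$ on $\g^\vee$. Hence $\phi$ commutes with $\tilde{L}_X$ and preserves $\g$-invariance. Second, for horizontality one establishes the Mathai--Quillen intertwining $\phi \circ (1 \otimes i_X) = \tilde{i}_X \circ \phi$ (up to sign, depending on the chosen tensor-product convention) via the Duhamel expansion $\phi^{-1} \tilde{i}_X \phi = \tilde{i}_X + [\tilde{i}_X, \gamma] + \tfrac12[[\tilde{i}_X, \gamma], \gamma] + \cdots$. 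The first-order bracket reduces, via $\{i_X, i_{e_j}\} = 0$ on $M$ and $\{i_X, \theta^j\} = \langle X, e^j\rangle$ on $W\g$, to $\pm i_X \otimes 1$, and all higher iterated brackets vanish. Combined with the invariance statement, this gives the set-theoretic bijection $\phi(C^*_\mathrm{W}(M)) = C^*_\mathrm{Car}(M)$.

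Finally, the differential intertwining $\delta_\mathrm{Car} \circ \phi = \phi \circ \delta_\mathrm{W}$ is obtained analogously by expanding
\[
\phi^{-1} \delta_\mathrm{W} \phi = \delta_\mathrm{W} + [\delta_\mathrm{W}, \gamma] + \tfrac{1}{2}[[\delta_\mathrm{W}, \gamma], \gamma] + \cdots.
\]
A direct calculation from $\{\delta, i_{e_j}\} = L_{e_j}$ on $M$ and $\delta_\mathrm{W}(\theta^j) = \overline{e}^j = F^j - \tfrac{1}{2}\sum_{k,l} c^j_{k l} \theta^k \theta^l$ yields a first-order bracket of the form $-\sum_j L_{e_j} \otimes \theta^j + \sum_j i_{e_j} \otimes \overline{e}^j$; the second-order bracket then precisely cancels the quadratic $\theta^k \theta^l$ piece on the image of $\phi$, and all higher iterated brackets vanish there. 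What remains, after restriction to $\phi(C^*_\mathrm{W}(M)) \subset M \otimes (W\g)_\mathrm{hor}$, is exactly $\delta \otimes 1 - \sum_i i_{e_i} \otimes F^i = \delta_\mathrm{Car}$. Passing to cohomology yields the claimed isomorphism of $(S\g^\vee)^\g$-modules.

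The main obstacle is the graded-sign bookkeeping in these Duhamel expansions: one must carefully track all Koszul signs that arise when iterating the brackets, and verify that the quadratic structure-constant correction in $\delta_\mathrm{W}(\theta^j)$ cancels the higher-order brackets in exactly the right way to produce the Cartan differential. This is the content of the classical Kalkman identity; I would either reproduce the computation by induction on the bracket order or cite the detailed treatment in \cite{kalkman} and \cite{guillemin} as indicated in the statement.
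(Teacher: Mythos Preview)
The paper does not supply its own proof of this theorem; it is stated with the attribution ``See, e.g., \cite[Theorem 3.2]{kalkman} and \cite[Chapter 4]{guillemin}'' and used thereafter as a black box. So there is no in-paper argument to compare against.

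Your sketch follows the standard Mathai--Quillen/Kalkman argument from those references and is correct in spirit: nilpotency of $\gamma$, $\tilde L_X$-equivariance of $\phi$, and the Duhamel/BCH expansion to identify the conjugated contractions and differential. One point to tighten: the intertwining identity you write, $\phi\circ(1\otimes i_X)=\tilde i_X\circ\phi$, is oriented the wrong way for the conclusion you need. To show that $\phi$ carries basic elements (killed by $\tilde i_X$) into $M\otimes (W\g)_{\mathrm{hor}}$ (killed by $1\otimes i_X$), you need the opposite direction, namely $(1\otimes i_X)\circ\phi=\phi\circ\tilde i_X$, equivalently $\phi^{-1}(1\otimes i_X)\phi=\tilde i_X$. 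Your BCH computation of $\phi^{-1}\tilde i_X\phi$ would therefore have to be replaced by that of $\phi^{-1}(1\otimes i_X)\phi$ (or you can conjugate by $\phi$ rather than $\phi^{-1}$). This is precisely the sign/direction bookkeeping you already flag as the main obstacle, and it is straightened out carefully in \cite[Chapter 4]{guillemin}; with that correction your outline matches the cited proofs.
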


\subsection{Preliminaries on gapped filtered $A_\infty$ categories}\label{sub 2.3}
Let $\mathrm{G}$ be a discrete submonoid of $2\cZ \times \cR_{\geq 0}$. 
We denote by $\mu \colon \mathrm{G} \to 2\Z$ the first projection and 
by $\omega \colon \mathrm{G} \to \R_{\geq 0}$ the second projection.
Suppose that for each $E \in \R_{\geq 0}$
\[\# \{ \beta \in\mathrm{G} \mid \omega(\beta) \leq E \} < \infty.\]
Let $R$ be a $\Z$-graded commutative algebra over $\C$.
We first recall the notion of an $A_\infty$ category (over $R$).
 \begin{definition}
 A $\Z$-graded unital $A_\infty$ category $(\cA, \{\m^\cA_k\})$ over $R$ 
 consists of the following data $($the first three data is simply denoted by $\cA)$:
 \begin{itemize}
 \item $\ob\cA$: a set of objects,
 \item $\cA(A,B)$: $\Z$-graded modules over $R$ 
 for pairs $(A,B) \in \left(\ob\cA\right)^2$,
 \item $1_A$: degree $0$ elements of $\cA(A, A)$ for $A \in \ob\cA$.
 \item $R$-module maps $\m_k^\cA \ (k \geq 1)$ of degree $1$ 
 for $(k+1)$-tuples $(A_0,...,A_k) \in \left(\ob\cA\right)^{k+1}$,
 \begin{align}
 \m_k^\cA
 &\colon \cA(A_0,A_1)[1]\underset{R}\otimes\cdots\underset{R}\otimes\cA(A_{k-1},A_k)[1]\to\cA(A_0,A_k)[1] \nonumber
 \end{align}
 \end{itemize}
 such that they satisfy the following relations:
 \begin{itemize}
 \item $A_\infty$ relations : 
 \begin{align}
 \sum_{\substack{k_1+k_2=k+1\\1 \leq k_1, k_2\\0\leq i< i+k_2\leq k}}
 (-1)^{\#}\m^\cA_{k_1}(x_1,...,x_i,\m^\cA_{k_2}(x_{i+1},...,x_{i+k_2}),x_{i+k_2+1},...,x_k) = 0 \nonumber
 \end{align}
 where $x_i$ are homogeneous elements of $\cA(A_{i-1}, A_i)$ and $\# = |x_1|'+\cdots+|x_i|'$.
 \item Unitality:
 \begin{align}
 \m_2^\cA(1_A,x)=(-1)^{|x|}\m_2^\cA(x,1_A)=x &{\rm \ if \ }k=2, \nonumber\\
 \m_k^\cA(...,1_A,...)=0 &{\rm \ if \ }k\neq 2. \nonumber
 \end{align}
 \end{itemize}
 For simplification, a $\Z$-graded unital $A_\infty$ category $(\cA, \{\m^\cA_k\})$ is also denoted by $\cA$.
 \end{definition}
 The morphisms $\m_k^\cA$ naturally induce degree $2-k$ morphisms
 \begin{align}
 &\cA(A_0,A_1)\underset{\C}\otimes\cdots\underset{\C}\otimes\cA(A_{k-1},A_k)\to\cA(A_0,A_k) \label{unshifted}
 \end{align}
 which are also denoted by $\m_k^\cA.$
 
 We next recall the notion of a unital $\mathrm{G}$-gapped filtered $A_\infty$ category.
\begin{definition}[cf.\,\cite{fukaya:mir2}]
A unital $\mathrm{G}$-gapped filtered $A_\infty$ category $(\cA, \{\m_{k, \beta}\})$ consists of the following data:
\begin{itemize}
\item $\cA$: a unital $\Z$-graded $A_\infty$ category over $R$.
\item   $R$-module morphisms of degree $1-\mu(\beta)$ with $\m_{k, 0}=\m_k  ($especially $\m_{0, 0}=0)$
for $A_0, \dots, A_k \in \ob\cA, k \in \Z_{\ge0}, \beta \in \mathrm{G}$:
\begin{align}
\m_{k, \beta} \colon \cA(A_0,A_1)[1]\underset{R}\otimes\cdots\underset{R}\otimes\cA(A_{k-1},A_k)[1]\to\cA(A_0,A_k)[1], \nonumber
\end{align}
which naturally induce degree $2-k-\mu(\beta)$ morphisms $\m_{k, \beta}$ 
similar to $($\ref{unshifted}$)$,
\end{itemize}
such that they satisfy the following relations:
\begin{itemize}
\item
For $A_0,...,A_k \in \ob\cA $ and homogeneous elements $x_i \in \cA(A_{i-1}, A_i),$ the morphisms $\m_{k, \beta}$ satisfy the $A_\infty$ relations for $(k, \beta) \in \Z_{\ge 0} \times \mathrm{G}$:
\begin{align}
\sum_{\substack{k_1+k_2=k+1, \ 0 \leq k_1, k_2, \\0\leq i\leq k-k_2\\
\beta_1+\beta_2=\beta}}
(-1)^{\#}\m_{k_1,\beta_1}(x_1,...,x_i,
\m_{k_2,\beta_2}(x_{i+1},...,x_{i+k_2}),x_{i+k_2+1},...,x_k) = 0 \nonumber
\end{align}
where $\# = |x_1|'+\cdots+|x_i|'$.
\item 
The units $1_A$ satisfy 
\begin{align}\label{unit}
&\m_{k,\beta}(\cdots,1_A,\cdots)=0 &{\rm if\ } (k,\beta)\neq (2,0),\\ \label{unit2}
&\m_{2,0}(1_A,x)=(-1)^{|x|}\m_{2,0}(x,1_A)=x &{\rm if\ } (k,\beta)=(2,0).
\end{align}
\end{itemize}
We simply say an unital gapped filtered $A_\infty$ category when we don't specify $\mathrm{G}$.
\end{definition}
To simplify notation $\m_{1, 0}$ is also denoted by $\delta$, then we have $\delta^2=0$. 

\begin{remark}
We obtain a $\Z/2\Z$-graded unital curved $A_\infty$ category $(\A, \{\m_k\})$ over $\Lambda$ 
from a gapped filtered $A_\infty$ category $(\cA,\{\m_{k,\beta}\})$ over $\C$ by taking
\begin{align}
\ob\A &:= \ob\cA, \nonumber\\
\A(A,B) &:= \cA(A,B)\hat\uotimes\Lambda,\nonumber\\
1_A&:=1_A \in \A (A, A),\nonumber\\
\m_k &:=\sum_{\beta\in\mathrm{G}} T^{\omega(\beta)/2\pi} \m_{k,\beta}.\nonumber
\end{align} 
\end{remark}
\subsection{$\g$-differential gapped filtered $A_\infty$ categories}\label{sub 2.4}
Let $\g$ be a complex Lie algebra.
\begin{definition}\label{gfA}
Let $(\cA, \{\m_{k, \beta}\})$ be a unital $\mathrm{G}$-gapped filtered $A_\infty$ category over $\C$.
Suppose that $\cA(A, B)$ is a $\g$-differential space with the differential $\delta=\m_{1,0}$ for each $A, B \in \ob\cA$.
These data is called a $\g$-differential unital $\mathrm{G}$-gapped filtered $A_\infty$ category over $\C$
if they satisfy the following equation for each $k \in \Z_{\ge 0}$ and $\beta \in \mathrm{G}$ with $(k, \beta) \neq (1, 0)$: 
\begin{align}\label{interior}
   i_X \m_{k,\beta} (x_1,...,x_k) + \sum_{i=1}^k (-1)^{|x_1|'+\cdots +|x_{i-1}|'} \m_{k,\beta} (x_1,..., i_X x_i ,..., x_k) = 0.
\end{align}
Here $X \in \g$ and $x_i \in \cA(A_{i-1}, A_i)$ are homogeneous elements.
\end{definition}
\begin{proposition}
For $k \in \Z_{\ge 0}, \beta \in \mathrm{G}, X \in \g$, and homogeneous elements $x_i \in \cA(A_{i-1}, A_i) \ (i=1,2, \dots , k)$, we have 
\begin{align}\label{lieder}
 L_X \m_{k,\beta} (x_1,...,x_k)=\sum_{i=1}^k \m_{k,\beta} (x_1,..., L_X x_i ,..., x_k).
\end{align} 
\end{proposition}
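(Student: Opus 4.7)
The plan is to work in the bar coalgebra, where both hypothesis \eqref{interior} and conclusion \eqref{lieder} take compact coderivation forms related by a one-line Jacobi calculation.

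Let $V = \bigoplus_{A, B \in \mathrm{ob}\,\cA} \cA(A, B)$ and form the tensor coalgebra $T(V[1]) = \bigoplus_{n \geq 0} V[1]^{\otimes n}$. Any graded linear map $f \colon V[1]^{\otimes k} \to V[1]$ extends uniquely to a coderivation $\hat f$ of $T(V[1])$, determined by its ``Taylor components'' $\pi \circ \hat f|_{V[1]^{\otimes n}}$, where $\pi \colon T(V[1]) \to V[1]$ is the projection. Write $\hat\m_{k, \beta}$, $\hat\delta := \hat\m_{1, 0}$, $\hat{\imath}_X$, $\hat L_X$ for the coderivations extending $\m_{k, \beta}$, $\delta$, $i_X$, $L_X$. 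Two translations follow from length bookkeeping (both $\hat{\imath}_X$ and $\hat\delta$ preserve length, while $\hat\m_{k, \beta}$ shifts length by $k - 1$) combined with the defining identities: Cartan's magic formula $L_X = \delta i_X + i_X \delta$ becomes $\hat L_X = [\hat{\imath}_X, \hat\delta]$, and the derivation property \eqref{interior} becomes $[\hat{\imath}_X, \hat\m_{k, \beta}] = 0$ for every $(k, \beta) \neq (1, 0)$.

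The graded Jacobi identity then gives
\begin{align*}
[\hat L_X, \hat\m_{k, \beta}] = [[\hat{\imath}_X, \hat\delta], \hat\m_{k, \beta}] = [\hat{\imath}_X, [\hat\delta, \hat\m_{k, \beta}]] + [\hat\delta, [\hat{\imath}_X, \hat\m_{k, \beta}]].
\end{align*}
For $(k, \beta) \neq (1, 0)$ the second term vanishes by the translation of \eqref{interior}. For the first, the $A_\infty$ relation for index $(k, \beta)$ identifies $[\hat\delta, \hat\m_{k, \beta}]$ (up to sign) with the coderivation extending the nested part $R_{k, \beta}$ of the relation, consisting of all compositions $\m_{k_1, \beta_1}(\ldots, \m_{k_2, \beta_2}(\ldots), \ldots)$ with both $(k_1, \beta_1), (k_2, \beta_2) \neq (1, 0)$. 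Since \eqref{interior} applies to each $\m_{k_i, \beta_i}$ individually, iterating it factor by factor shows $i_X$ is a derivation of $R_{k, \beta}$, so $[\hat{\imath}_X, [\hat\delta, \hat\m_{k, \beta}]] = 0$ as well. Reading off the Taylor component of $[\hat L_X, \hat\m_{k, \beta}] = 0$ at length $k$ yields \eqref{lieder}. The remaining case $(k, \beta) = (1, 0)$ reduces to $L_X \delta = \delta L_X$, one of the axioms of a $\g$-differential space.

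The main obstacle is the careful identification of $[\hat\delta, \hat\m_{k, \beta}]$ with the nested part of the $A_\infty$ relation -- including the curvature terms with $k_2 = 0$, for which \eqref{interior} reads simply $i_X \m_{0, \beta} = 0$ -- and the verification that iterated application of \eqref{interior} correctly yields $i_X R_{k, \beta}(\vec x) = -\sum_i (-1)^{\epsilon_i} R_{k, \beta}(x_1, \ldots, i_X x_i, \ldots, x_k)$ with the signs expected from \eqref{interior} for $R_{k, \beta}$. A direct expansion of $L_X \m_{k, \beta}(\vec x) = \delta i_X \m_{k, \beta}(\vec x) + i_X \delta \m_{k, \beta}(\vec x)$ via Cartan and the $A_\infty$ relation is equally possible but notationally heavier; the coderivation formalism is designed precisely to suppress such sign bookkeeping.
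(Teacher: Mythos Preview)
Your argument is correct and is the paper's own proof repackaged in bar-coalgebra language: the paper performs exactly the direct expansion you allude to in your final paragraph, writing $L_X\m_{k,\beta}=\delta i_X\m_{k,\beta}+i_X\delta\m_{k,\beta}$, pushing $i_X$ through $\m_{k,\beta}$ via \eqref{interior} and $\delta$ through $\m_{k,\beta}$ via the $A_\infty$ relation $\delta\m_{k,\beta}+\m_{k,\beta}\delta+\hat\m^2_{k,\beta}=0$ (their $\hat\m^2_{k,\beta}$ is your $R_{k,\beta}$), with your vanishing $[\hat\imath_X,[\hat\delta,\hat\m_{k,\beta}]]=0$ appearing there as the cancellation of the two copies of $\hat\m^2_{k,\beta}\,i_X$. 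One small correction to your last paragraph: since $R_{k,\beta}$ is a composition of two odd operations it has \emph{even} degree, so iterating \eqref{interior} twice gives $i_X R_{k,\beta}=+\sum_i(-1)^{\epsilon_i}R_{k,\beta}(\ldots,i_Xx_i,\ldots)$ rather than $-$; this is precisely the sign needed for the graded commutator $[\hat\imath_X,\widehat{R_{k,\beta}}]$ (an ordinary commutator here, as $\widehat{R_{k,\beta}}$ is even) to vanish, so your conclusion is unaffected.
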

\begin{proof}
For $(k, \beta)=(1, 0)$, this proposition follows from the definition of $\g$-differential space.
We assume $(k, \beta) \neq (1, 0)$.
Set 
\begin{align*}
       \delta(x_1 \otimes \cdots \otimes x_k)&:=
       \sum_{i=1}^k(-1)^{|x_1|'+ \cdots +|x_{i-1}|'} x_1 \otimes \cdots \otimes \delta x_i \otimes \cdots \otimes x_k \\
       i_X(x_1 \otimes \cdots \otimes x_k)&:=
       \sum_{i=1}^k(-1)^{|x_1|'+ \cdots +|x_{i-1}|'} x_1 \otimes \cdots \otimes i_X x_i \otimes \cdots \otimes x_k 
 \end{align*}
 We also set       
 \[ \hat{\m}^2_{k, \beta}(x_1\otimes \cdots \otimes x_k):=
       \sum(-1)^{|x_1|'+ \cdots +|x_i|'}\m_{k_1, \beta_1}(x_1 \otimes \cdots \otimes 
       \m_{k_2, \beta_2}(x_{i+1} \otimes \cdots \otimes x_{i+k_2}) \otimes \cdots \otimes x_k), \]
where the sum is taken over the set 
\[\{(k_1, k_2, \beta_1, \beta_2, i)\mid \substack{k_1+k_2=k+1, \ \beta_1+\beta_2=\beta, \ 0 \le i \le k-k_2, \\  
      (k_1, \beta_1) \neq (1, 0), \ (k_2, \beta_2)\neq (1, 0)}\}.\]
By definition, we have $\delta \circ \m_{k, \beta}+\m_{k, \beta} \circ \delta + \hat{\m}_{k, \beta}^2=0.$
Then we see that 
\begin{align*}
      \delta i_X \m_{k, \beta}&=-\delta \m_{k, \beta} i_X=\m_{k, \beta} \delta i_X+\hat{\m}_{k, \beta}^2 i_X, \\
      i_X \delta \m_{k, \beta}&=-i_X \m_{k, \beta} \delta-i_X \hat{\m}_{k, \beta}^2=\m_{k, \beta} i_X \delta-\hat{\m}_{k, \beta}^2 i_X.
      \end{align*}
 Hence we have $L_X \m_{k, \beta}=\m_{k, \beta}(\delta i_X +i_X \delta).$
 Combined with the equation
 \[(\delta i_X +i_X \delta)(x_1 \otimes \cdots \otimes x_k)=\sum_{i=1}^k x_1 \otimes \cdots \otimes L_Xx_i \otimes \cdots \otimes x_k,\]
 we obtain the desired equation.
\end{proof}

Since $i_X \circ \m_{2, 0}+\m_{2, 0} \circ (i_X \otimes \mathrm{id} + \mathrm{id} \otimes i_X)=0$ for $X \in \g$, we have
\begin{align}
i_X(1_A)=0, \ L_X(1_A)=(\delta \circ i_X+i_X \circ \delta)(1_A)=0 . \label{dunit}
\end{align}

Let $(\cA, \{\m_{k, \beta}\})$ be a $\g$-differential unital $\mathrm{G}$-gapped filtered $A_\infty$ category over $\C$.
Let $\bullet$ be $\mathrm{W}$ or $\mathrm{Car}$.
By Equations (\ref{dunit}), we have $1_A \in C^0_\bullet(\cA(A, A)).$

For $A_0, A_1, \dots, A_k \in \ob\cA$ and $(k, \beta) \neq (1, 0)$, 
let $\m_{k, \beta}^\bullet$ be the trivial extension of $\m_{k, \beta}$ to 
\[
C^*_\bullet(\cA(A_0, A_1))[1] \underset{(S\g^\vee)^\g}{\otimes} \cdots \underset{(S\g^\vee)^\g}{\otimes} C^*_\bullet(\cA(A_{k-1}, A_k))[1],\]
i.e., for homogeneous elements $x_i \in \cA(A_{i-1}, A_i)$ and $f_i \in W\g$ 
\[\m^\bullet_{k, \beta}(x_1\otimes f_1, \dots, x_k \otimes f_k):=(-1)^{\sum_{i<j}|f_i|\cdot|x_j|'}
  \m_{k, \beta}(x_1, \dots, x_k)\otimes (f_1\cdots f_k)\]
and set $\m_{1, 0}^\bullet:=\delta_\bullet$.

Then $C_\bullet(\cA)$ consists of the following data:
\begin{itemize}
\item a set of objects $\ob C_\bullet(\cA):=\ob \cA.$
\item $\Z$-graded $(S\g^\vee)^\g$-modules $C_\bullet(\cA)(A, B):=C^*_\bullet(\cA(A, B))$ for $A, B \in \ob\cA.$
\item degree $0$ morphisms $1_A \in C_\bullet(\cA)(A, A)$ for $A \in \ob\cA$.
\item the homomorphisms $\m_{k,\beta}^\bullet$.
\end{itemize}

By using Equations $(\ref{interior})$, $(\ref{lieder})$, and $(\ref{dunit})$, we easily see that $\m_{k, \beta}^\bullet$ give a
unital $\mathrm{G}$-gapped filtered $A_\infty$ algebra structure over $(S\g^\vee)^\g$ on $C_\bullet(\cA).$
Moreover the Mathai-Quillen morphism $\phi$ satisfies
\begin{align*}
\phi\circ\m_{k, \beta}^\mathrm{W}&=\m_{k, \beta}^\mathrm{Car}\circ(\overbrace{\phi \otimes\cdots\otimes\phi}^k) 
\end{align*}
i.e., $\phi$ gives an $A_\infty$ functor.

Suppose that $\g$ is abelian.
Let $\lambda$ be a $\C$-algebra homomorphism from $(W\g)_{\mathrm{hor}} \cong S\g^\vee$ to $\Lambda_0,$
which is called an equivariant parameter.
By evaluating $\m_{k, \beta}^\mathrm{Car}$ at $\lambda$, we will define a $\Z/2\Z$-graded unital curved $A_\infty$ category 
$(\A^\g, \{\m_k^\lambda\})$ over $\Lambda.$
We define $\cA^\g$ by
\begin{itemize}
\item a set of objects $\ob \cA^\g:=\ob \cA,$
\item $\Z$-graded modules $\cA^\g(A, B):=\cA(A, B)^\g$,
\item degree $0$ elements $1_A \in \cA^\g(A, A)$ (recall Equations $(\ref{dunit})$).
\end{itemize}
By Equation (\ref{lieder}), the restrictions of $\m_{k, \beta}$ to the $\g$-invariant part give a unital $\mathrm{G}$-gapped filtered $A_\infty$ structure $\m_{k, \beta}^\g$ on $\cA^\g.$
This unital $\mathrm{G}$-gapped filtered $A_\infty$ algebra induces a 
$\Z/2\Z$-graded unital curved $A_\infty$ structure $\m_k^\g$ on $\A^\g$,
where $\A^\g(A, B):=\cA^\g(A,B)\hat{\uotimes}\Lambda$ and $\m_k^\g:=\sum_{\beta \in \mathrm{G}}
T^{\frac{\omega(\beta)}{2\pi}} \m^\g_{k, \beta}.$ 
Set 
\begin{align*}
\m_k^\lambda:=
\begin{cases}
\m_k^\g &\text{ if } k \neq 1 \\
\m_1^\g- \sum_{i=1}^r \lambda(F^i) i_{e_i} & \text{ if } k=1.
\end{cases}
\end{align*}
By Equations (\ref{interior}) and $(\ref{dunit})$,
we see that $(\A^\g, \{\m_k^\lambda\})$ is a $\Z/2\Z$-graded unital curved $A_\infty$ category over $\Lambda.$
We note that this $\Z/2\Z$-graded curved $A_\infty$ category is induced from a ($\Z$-graded) $\mathrm{G}'$-gapped filtered $A_\infty$ category
$(\cA^\g, \{\m_{k, \beta}^\lambda\})$ over $\C$ for some $\mathrm{G}' \supseteq \mathrm{G}$. 

We introduce a notion of  bounding cochains for unital $\g$-differential gapped filtered $A_\infty$ categories.
Let $(\cA, \{\m_{k, \beta}\})$ be a unital $\g$-differential $\mathrm{G}$-gapped filtered $A_\infty$ category over $\C$ and
$\mathfrak{b}_{+, i}$ are odd elements of $\cA(A_i, A_i) \hat{\uotimes}\Lambda_+ \ (A_0, \dots, A_k \in \ob \cA)$.
We recall that the following operators
\begin{align}\label{mctwist}
\m^{\mathfrak{b}_+}_k(x_1,...,x_k) 
:=\sum_{l_0+\cdots+l_k=l} 
\m_{k+l}
(\overbrace{\bb_{+,0},...,\bb_{+, 0}}^{l_0},x_1,
\overbrace{\bb_{+, 1},...,\bb_{+,1}}^{l_1},...,x_k,
\overbrace{\bb_{+,k},...,\bb_{+, k}}^{l_k})
\end{align}
also satisfy the $A_\infty$ relations and unitality with units $1_{A_0}, \dots, 1_{A_k}$(cf. \cite[Proposition 1.20]{fukaya:mir2}).  

Suppose that 
\begin{align}
\mathfrak{b}_{+, i} \in \cA^1(A_i, A_i)\hat{\uotimes}\Lambda_+ \nonumber
\end{align}
and there exist $c_0, \dots, c_k \in \Hom_\C(\g, \Lambda_+)$ such that 
\begin{align}
i_X(\mathfrak{b}_{+, 0})=c_0(X) \cdot 1_{A_0}, \dots,\  i_X(\mathfrak{b}_{+, k})=c_k(X) \cdot 1_{A_k} \nonumber
\end{align}
for all $X \in \g$ and $i=0, 1, \dots, k$, where $\cA^1(A_i, A_i)$ denotes the degree one part of $\cA(A_i, A_i)$.
If $c_0=c_1=\cdots =c_k$, then, by using Equations (\ref{unit}) and (\ref{unit2}), we easily see that the operators $\m^{\mathfrak{b}_+}_k$ also satisfy
the Equation (\ref{interior}) (for some monoid $\mathrm{G}'$).
Hence, by choosing a finite collection $\{(A_i, \bb_{+, i})\} \ (i=0, 1, \dots, k)$ as objects of $\cA^\mathrm{bc}$
and putting 
\begin{align}
\cA^\mathrm{bc}((A_i, \bb_{+, i}), (A_j,\bb_{+, j}))=
\begin{cases} \cA(A_i, A_j) & \text{if} \ c_i=c_j \\ 0 &\text{if}\  c_i \neq c_j, \nonumber
\end{cases}
\end{align}
we obtain a unital $\g$-differential gapped filtered $A_\infty$ category $(\cA^{\mathrm{bc}}, \{\m_{k, \beta}^{\bb_+}\})$.
\begin{definition}\label{bc}
Let $(\cA, \{\m_{k, \beta}\})$ be a unital $\g$-differential $\mathrm{G}$-gapped filtered $A_\infty$ category over $\C.$
Let $A \in \ob \cA$ and $\mathfrak{b_+}$ be an element of $\cA^1(A, A) \hat\uotimes \Lambda_+$ with $\delta \bb_+=0$. 
Suppose that $i_X(\mathfrak{b}_+) \in \Lambda_+ \cdot 1_A$ for all $X \in \g$.
The element $\mathfrak{b}_+$ is called a bounding cochain if $\m_0^{\bb_+}(1) \in \Lambda_+ \cdot 1_A.$
\end{definition}

Choose an equivariant parameter $\lambda$ and suppose that $\bb_{+, i}$ are bounding cochains.
Then we have $L_X \bb_{+, i}=0$ and we can deform $(\cA^\g, \{\m_{k, \beta}^\lambda\})$ by the same way as Equation (\ref{mctwist}).
Thus we obtain a unital gapped filtered  $A_\infty$ category $(\cA^\g, \{\m_{k, \beta}^{\bb_+, \lambda}\})$
(for some monoid $\mathrm{G}'$).
Let $(\A^\g, \{\m_k^{\bb_+, \lambda}\})$ be the associated curved $A_\infty$ category.
By the definition, the curvature term $\m^{\mathfrak{b_+}, \lambda}_0(1) \in \A^\g(A_i, A_i)$ is 
\begin{align}\label{curvature}
\sum_{k=0}^\infty \m_{k}^\g(\bb_{+, i},\dots,\bb_{+, i})
- \sum_{j=1}^r \lambda(F^j) i_{e_j}(\bb_{+, i})  . 
\end{align}
Since $\bb_{+, i}$ is a bounding cochain, we have 
\[\m^{\mathfrak{b_+}, \lambda}_0(1) \in \Lambda_+ \cdot 1_{A_i}.\]
 Finally, we obtain a $\Z/2\Z$-graded (uncurved) unital $A_\infty$ category $(\A'^{\g}, \{ \m'^{\bb_+, \lambda}_k \})$ over $\Lambda$
 by setting 
 \begin{align*}
 \A'^\g((A_i, \bb_{+, i}), (A_j, \bb_{+, j}))=
 \begin{cases}
 \A^\g((A_i, \bb_{+, i}), (A_j, \bb_{+, j}))  \ &\text{if} \ \m^{\bb_{+, i}, \lambda}_0(1) =  \m^{\bb_{+, j}, \lambda}_0(1) \\
 0 \ &\text{if} \ \m^{\bb_{+, i}, \lambda}_0(1) \neq  \m^{\bb_{+, j}, \lambda}_0(1)
 \end{cases}
 \end{align*} 
 and
 \begin{align*}
 \m'^{\bb_+, \lambda}_k(x_1, x_2, \dots, x_k)=
 \begin{cases}
 \m^{\bb_+, \lambda}_k(x_1, x_2, \dots, x_k) &\text{if} \ k \ge 1 \ \text{and} \ x_1 \neq 0 , \cdots, x_k \neq 0\\
 0 &\text{if} \ k=0 \ \text{or} \ x_i=0 \ \text{for some} \ i.
 \end{cases}
 \end{align*}
 Here $x_i$ are homogeneous elements of morphism spaces of $\A'^\g.$ 

\section{Equivariant Floer theory}\label{equivfloer}

In this section we shall formulate 
a version of the equivarinat Floer theory 
using the de Rham model.
We first review some basics on the equivariant cohomology 
to fix conventions. 

\subsection{Equivariant cohomology}\label{equivcoh}

Let $G$ be a compact connected Lie group, 
$Lie(G)$ be its Lie algebra and 
$\g := Lie(G) \otimes_\R \C$ be its complexification.
We take a basis $e_1,...,e_r$ of $Lie(G)$, 
regarded also as the basis of $\g$ over $\C$ 
and its dual basis $e^1,...,e^r$ of $\g^\vee$.
Then $G$ acts on $S\g^\vee$ via  
$(g\cdot f)(X) := f(g^{-1}\cdot X)$ where $g \in G$ and $X \in \g$, 
and $g^{-1} \cdot X$ denotes the adjoint action of $g^{-1}$.

Let $M$ be an $n$-dimensional $G$-manifold, i.e.\ $G$ acts smoothly on $M$ from the left 
and let $\rho$ be a $\C$-local system on $M$.
For $X \in \g$, $\xx \in \Gamma (TM)$ is defined by 
$\left.\dfrac{d}{dt}\right|_{t=0} \exp (tX)\cdot p$ for $p \in M$.

\begin{definition}
Let $\Omega^\ast(M;\rho)$ be the de Rham complex with coefficients in $\rho$ 
with the differential locally defined as 
$d(\alpha\otimes s) := d\alpha \otimes s$, 
where $\alpha$ is a complex valued form and $s$ is a flat section of $\rho$.
We simply denote by $\Omega^\ast(M)$ the de Rham complex 
of $\C$-valued differential forms with the trivial local system of rank $1$.

Let $i_\xx \colon \Omega^\ast(M;\rho) \to \Omega^{\ast-1}(M;\rho)$ 
be the interior product locally defined by $i_\xx(\alpha\otimes s) := i_\xx (\alpha)\otimes s$ 
for $X \in \g$, 
and let $L_\xx \colon \Omega^\ast(M;\rho) \to \Omega^{\ast}(M;\rho)$ 
be the Lie derivative $L_\xx := d i_\xx + i_\xx d$.
Then the quadruple $(\Omega^\ast(M;\rho),\delta,L,i)$ forms 
a $\g$-differential space, 
where $\delta := d$, $L_X := L_\xx$ and $i_X := i_\xx$.
\end{definition}

The equivariant cohomology of $M$ is defined to be 
the cohomology of the Weil model of this $\g$-differential space:
$H^\ast_G(M) := H^\ast(\Omega^\ast_W(M),\delta_W)$.
See Atiyah-Bott \cite[Theorem 4.13]{ab} for the relationship between the equivariant cohomologies 
defined by the Weil model and by the homotopy quotient 
$EG \underset{G}\times M$.

Let $M$ be a compact manifold with corners.
There are several different formulations of manifolds with corners.
In this paper we use the formulation by Joyce \cite{joyce}.
We call $M$ a $G$-manifold with corners if it is a manifold with corners 
equipped with a smooth $G$-action.
When $M$ is oriented, $\partial M$ can be equipped with an orientation 
so that the orientation of the both sides of the following coincide:
$T_xM \cong \R v \oplus T_{x'} \partial M$ 
where $x' \in \partial M$, $x = i(x')$ via $i \colon \partial M \to M$ 
and $v$ is an outward vector at $x$ (cf.\ \cite[Convention 7.2]{joyce}).
The smooth differential $r$-forms on $M$ with coefficients in $\rho$ 
are defined to be the smooth sections $M \to \bigwedge^r T^\ast M \otimes \rho$.
We denote by $\eta|_{\partial M}$ the pull-back of a form $\eta$ on $M$ 
via $i \colon \partial M \to M$.

Let $M$ and $N$ be compact $G$-manifolds with corners, 
$f \colon M \to N$ be a smooth $G$-equivariant map 
and $\rho$ be a $\C$-local system on $N$.
Note that 
$f^\ast (i_\xx \eta) = i_\xx (f^\ast \eta)$, 
$f^\ast (L_\xx \eta ) = L_\xx (f^\ast \eta)$ hold 
and therefore 
$f$ induces a homomorphism of complexes 
$\Omega^\ast(N;\rho)^\g \to \Omega^\ast(M;f^\ast \rho)^\g$ 
which we also denote by $f^\ast$ abusing notation.

Assume further that $f$ is submersive 
(i.e.\ $f$ restricted to any stratum is submersive, \cite[Definition 3.2 (iv)]{joyce})
and that $M$ and $N$ are both oriented. 
We orient the fibers of $f$ 
such that the orientation of 
$T_{f(p)}N \oplus T^{\mathrm{fiber}}_{p}M$ 
coincides with that of $T_p M$, 
where $T^{\mathrm{fiber}}_{p}M$ is the tangent space of the fiber $f^{-1}(f(p))\subseteq M$ at $p \in M$ of $f$.
Then the integration along the fiber $f_\ast \eta$ 
of the form $\eta$ on $M$ with coefficients in $f^\ast \rho$ 
is defined as a form on $N$ with coefficients in $\rho$.
This is characterized by that the formula 
\begin{align}
\int_N \omega \wedge f_\ast \eta 
= \int_M f^\ast \omega \wedge \eta \nonumber
\end{align}
holds for any $\omega \in \Omega^\ast(N;\rho^\vee)$ 
(note that $\omega\wedge f_\ast\eta\in\Omega^\ast(N;\rho^\vee\otimes\rho )
\cong\Omega^\ast(N),
f^\ast\omega\wedge\eta\in\Omega^\ast(M;f^\ast\rho^\vee\otimes f^\ast\rho)
\cong\Omega^\ast(M)$).

\begin{lemma}
$i_\xx (f_\ast \eta) =  
f_\ast (i_\xx \eta)$. 
\end{lemma}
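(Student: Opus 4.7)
The plan is to establish the identity by testing against complementary-degree forms on $N$ and reducing to the defining integral property of the fiber integration. The key input is that $G$-equivariance of $f$ forces the fundamental vector field on $M$ to be $f$-related to that on $N$, i.e.\ $df(\xx) = \xx$ (obtained by differentiating $f \circ \exp(tX) = \exp(tX) \circ f$ at $t=0$), so that $i_\xx$ commutes with $f^\ast$ on $\Omega^\ast(N;\rho)$.

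I then fix $\omega \in \Omega^\ast(N;\rho^\vee)$ of degree $n - |f_\ast \eta| + 1$, chosen so that $\omega \wedge i_\xx(f_\ast \eta)$ is a top form on $N$. The cornerstone is a dimension count: $\omega \wedge f_\ast \eta$ has degree $n+1 > n = \dim N$ and thus vanishes identically. Applying the odd derivation $i_\xx$ to this zero form and rearranging yields $\omega \wedge i_\xx(f_\ast \eta) = (-1)^{|\omega|+1}\, i_\xx\omega \wedge f_\ast\eta$. Integrating over $N$, invoking the defining property of $f_\ast$ applied to the pair $(i_\xx\omega,\eta)$, and using $f^\ast(i_\xx\omega) = i_\xx(f^\ast\omega)$, I obtain $\int_N \omega \wedge i_\xx(f_\ast\eta) = (-1)^{|\omega|+1} \int_M i_\xx(f^\ast\omega)\wedge\eta$.

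The same vanishing trick applies on $M$: since $f^\ast\omega\wedge\eta$ has degree $m+1 > m = \dim M$, it too is zero, and expanding $i_\xx$ as a derivation of this zero gives $i_\xx(f^\ast\omega)\wedge\eta = -(-1)^{|\omega|} f^\ast\omega \wedge i_\xx\eta$. Substituting and collapsing signs yields $\int_N \omega \wedge i_\xx(f_\ast\eta) = \int_M f^\ast\omega \wedge i_\xx\eta = \int_N \omega \wedge f_\ast(i_\xx\eta)$, the last equality being the defining property of $f_\ast$ applied to $i_\xx\eta$. Since $\omega$ ranges over all complementary-degree forms, non-degeneracy of the wedge-integration pairing on the compact oriented $N$ forces $i_\xx(f_\ast\eta) = f_\ast(i_\xx\eta)$. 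The argument is essentially algebraic once the dimension cancellation is set up; the only points requiring care are the sign bookkeeping, the $\rho^\vee \otimes \rho \cong \C$ coefficient system, and the orientation convention for fibers, all of which are already absorbed into the defining integral property quoted above.
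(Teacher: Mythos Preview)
Your argument is correct. The signs and degree counts all check out: with $|\omega| + |f_\ast\eta| = n+1$ one has $|f^\ast\omega| + |\eta| = |\omega| + |f_\ast\eta| + (\dim M - \dim N) = m+1$, so both vanishing steps go through, and the two factors of $(-1)^{|\omega|+1}$ cancel as you say. The only point worth making explicit is that the wedge--integration pairing remains non-degenerate even though $N$ is a manifold with corners: if a smooth form $\alpha$ pairs to zero with every complementary form, then testing against bump forms supported in the interior forces $\alpha$ to vanish on the dense interior, hence everywhere by continuity. The paper already accepts this implicitly when it says the integral formula \emph{characterizes} $f_\ast$.

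That said, your route is genuinely different from the paper's. The paper simply writes ``The proof is based on the local calculation'' --- i.e.\ one chooses submersion coordinates $(x,y) \mapsto x$ in which $f_\ast$ is integration over the fiber variables $y$, decomposes $\xx$ into base and fiber components (using equivariance to see that the base component of $\xx$ on $M$ projects to $\xx$ on $N$), and computes both sides directly. Your global duality argument avoids any coordinate work and instead leverages the characterizing integral identity together with the dimension trick $i_\xx(\text{form of excessive degree}) = 0$. The local approach is more hands-on and makes the mechanism visible; yours is slicker and coordinate-free, at the mild cost of relying on the non-degeneracy of the pairing.
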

The proof is based on the local calculation.

This lemma implies that 
$L_\xx f_\ast \eta = (d i_\xx + i_\xx d)(f_\ast \eta ) = f_\ast (L_\xx \eta)$.
Therefore $f$ induces $\C$-linear maps between the spaces of $\g$-invariant forms 
$f_\ast \colon \Omega^\ast (M;f^\ast\rho)^\g 
\to \Omega^{\ast -\dim M +\dim N}(N;\rho)^\g$.
The restriction 
$f|_{\partial M} \colon \partial M \stackrel{i}\to M \stackrel{f}\to N$ is also submersive 
when $f$ is submersive, 
and the integrations along the fiber both for $f$ and for $f|_{\partial M}$ 
are defined.
Then the following holds.
\begin{lemma}
\begin{enumerate}
\item The Stokes formula holds: 
$df_\ast \eta =  
f_\ast d\eta + 
(-1)^{|\eta|+\dim M}
\left( f|_{\partial M} \right)_\ast \eta|_{\partial M}
$.\\
\item Let $L$ be another compact oriented manifold with corners 
and $g \colon N \to L$ be 
a smooth submersive map.
Then the integration along the fiber is compatible with composition, i.e.\ 
$(g \circ f)_\ast \eta = g_\ast \circ f_\ast \eta$.\\
\item $f_\ast (f^\ast \omega \wedge \eta ) = \omega \wedge f_\ast \eta$. 
\end{enumerate}
\end{lemma}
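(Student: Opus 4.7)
The plan is to prove all three parts by the same duality strategy: pair both sides against an arbitrary test form $\omega \in \Omega^*(N;\rho^\vee)$ (of the appropriate degree; for part (1) we additionally demand $\supp \omega \subset \interior N$), invoke the defining identity of the integration along the fibre to push everything onto $M$, manipulate with the Leibniz rule and the ordinary Stokes theorem, and transfer back to $N$. The uniqueness step, i.e.\ that a smooth form $\alpha$ on $N$ satisfying $\int_N \omega \wedge \alpha = 0$ for all such test $\omega$ must vanish, is standard: localise with bump forms on $\interior N$ and invoke smoothness of $\alpha$ to extend equality to the corner strata.

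Parts (2) and (3) are then essentially formal. For (2), the chain of equalities
\[\int_L \omega \wedge (g \circ f)_*\eta = \int_M (g\circ f)^*\omega \wedge \eta = \int_M f^* g^*\omega \wedge \eta = \int_N g^*\omega \wedge f_*\eta = \int_L \omega \wedge g_* f_*\eta\]
invokes the defining identity successively for $g\circ f$, $f$, and $g$; uniqueness yields the claim. For (3), testing against a suitable $\omega'$ on $N$ gives
\[\int_N \omega' \wedge f_*(f^*\omega \wedge \eta) = \int_M f^*\omega' \wedge f^*\omega \wedge \eta = \int_M f^*(\omega' \wedge \omega) \wedge \eta = \int_N (\omega' \wedge \omega) \wedge f_*\eta = \int_N \omega' \wedge (\omega \wedge f_*\eta),\]
and the argument closes by uniqueness.

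Part (1) is the substantial one; it rests on two Stokes applications, one on $M$ and one on $N$. Because $\supp\omega\subset\interior N$, ordinary Stokes on $N$ gives $\int_N d(\omega \wedge f_*\eta)=0$, so $\int_N d\omega \wedge f_*\eta = -(-1)^{|\omega|}\int_N \omega \wedge df_*\eta$. On the other hand, expanding $d(f^*\omega \wedge \eta)$ on $M$ by the Leibniz rule, integrating, applying ordinary Stokes on $M$, and feeding in the defining identity for the submersion $f|_{\partial M}\colon \partial M \to N$ yields
\[\int_N \omega \wedge (f|_{\partial M})_*\eta|_{\partial M} = \int_N d\omega \wedge f_*\eta + (-1)^{|\omega|}\int_N \omega \wedge f_*d\eta.\]
Substituting the first identity into the second and rearranging produces the asserted Stokes formula. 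I expect the main obstacle to be the sign bookkeeping: one reads off from $|\omega|=\dim M-|\eta|-1$ that $-(-1)^{|\omega|}=(-1)^{\dim M+|\eta|}$, matching the stated sign. A secondary point is a careful treatment of $f|_{\partial M}$ so that its fibre-orientation convention is compatible with the boundary orientation on $\partial M$; Joyce's framework for manifolds with corners makes this rigorous.
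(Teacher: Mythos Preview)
The paper does not actually provide a proof of this lemma; it is stated without proof as a standard fact about integration along the fibre, and the text immediately moves on to fibre products. Your duality approach via the defining characterization $\int_N \omega \wedge f_*\eta = \int_M f^*\omega \wedge \eta$ is the standard way to establish these identities, and your argument is correct: parts (2) and (3) are formal, and your two applications of Stokes for part (1), together with the degree count $|\omega|=\dim M-|\eta|-1$, produce the stated sign. The one point you flag yourself---compatibility of the fibre-orientation convention for $f|_{\partial M}$ with the boundary orientation on $\partial M$---does require checking against the paper's conventions (outward-normal-first for $\partial M$, base-then-fibre for fibres), but this is routine and you have identified it correctly as the only nontrivial verification remaining.
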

Let $M, N$ be compact oriented manifolds with corners, 
$L$ be a closed oriented manifold with a $\C$-local system $\rho$ 
and $f \colon M \to L$, $g \colon N \to L$ be submersions.
Recall that we can form the fiber product 
\[
\xymatrix{
 & N \underset{L}\times M \ar[dl]_t \ar[dr]^s & \\
 N \ar[dr]_g & & M \ar[dl]^f \\
 & L & 
 } 
 \]
so that $s$ and $t$ are smooth submersions.
The fiber product $N \underset{L}\times M$ is orientable 
and we define the orientation of the main stratum as follows:
decompose the tangent spaces of $N$ and $M$ at interior points as 
$TN = \ker dg \oplus g^\ast TL$ and $TM = f^\ast TL \oplus \ker df$, 
and define the orientation of the fiber product 
by the decomposition 
$T (N \underset{L}\times M) = t^\ast\ker dg \oplus t^\ast g^\ast TL \oplus s^\ast \ker df$.
Then we have the following formula.
\begin{lemma}
For $\eta \in \Omega^\ast(M;f^\ast\rho)$, 
\begin{align}
g^\ast \circ f_\ast \eta = t_\ast \circ s^\ast \eta . \nonumber
\end{align}
\end{lemma}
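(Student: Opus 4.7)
The plan is to establish this pointwise on $N$, using that integration along the fiber of a submersion is characterized locally. First I would observe that both sides live in $\Omega^{\ast-\dim M+\dim L}(N;g^\ast\rho)$: the right hand side lies there because $f\circ s = g\circ t$ gives $s^\ast f^\ast\rho \cong t^\ast g^\ast\rho$, and the fiber dimension of $t$ equals the fiber dimension of $f$, namely $\dim M-\dim L$. Since both operations are local on the base $N$ and behave naturally under restriction to the interior (the corner strata have measure zero for integration purposes, and $f,g$ being submersive in Joyce's sense guarantees that the fiber product has the expected local product structure on each stratum), it suffices to verify the identity in the interior using local models.

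Fix $n\in N$ in the interior and let $l=g(n)\in L$. The $t$-fiber over $n$ is $t^{-1}(n)=\{n\}\times f^{-1}(l)$, and $s$ restricted to this fiber is the canonical diffeomorphism onto $f^{-1}(l)\subset M$. I would now pick local coordinates in which $f$ and $g$ are linear projections: near a point $m\in f^{-1}(l)$ choose coordinates so that $M\cong U_L\times U_F$ with $f$ the first projection, and near $n$ choose coordinates so that $N\cong U_L\times U_G$ with $g$ the first projection. Then $N\times_L M$ is locally $U_L\times U_G\times U_F$ with $s$ and $t$ the obvious projections, and $s^\ast\eta$ is literally $\eta$ viewed as a form pulled up with an extra $U_G$ factor. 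Writing $\eta = \sum \eta_I\, dx^I$ in the $(U_L,U_F)$ coordinates, the identity $g^\ast f_\ast\eta = t_\ast s^\ast\eta$ at $n$ is then Fubini's theorem: integrating first over the fiber $U_F$ and then pulling back to $U_G$ gives the same result as pulling back to $U_G\times U_F$ and integrating out $U_F$. The coefficients in the local system pose no extra difficulty, because locally $\rho$ is trivial and the isomorphism $s^\ast f^\ast\rho\cong t^\ast g^\ast\rho$ respects this trivialization.

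The one point that requires real care is the orientation bookkeeping. The paper prescribes the orientation of the main stratum of $N\times_L M$ by the decomposition $T(N\times_L M)=t^\ast\ker dg\oplus t^\ast g^\ast TL\oplus s^\ast\ker df$, and the fiber of $t$ is oriented so that $TN\oplus T^{\mathrm{fiber}}(N\times_L M)$ matches $T(N\times_L M)$. Comparing these two decompositions shows that the $t$-fiber orientation on $s^\ast\ker df$ agrees with the orientation of $\ker df$ inherited from the convention $TM=f^\ast TL\oplus\ker df$ used to define $f_\ast$. Consequently the diffeomorphism $s\colon t^{-1}(n)\to f^{-1}(g(n))$ is orientation-preserving, and the Fubini comparison is a genuine equality with no hidden sign.

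The main obstacle is not any individual step but keeping the orientation conventions straight in the corners/local-system setting. Once the two fiber orientations are matched via the prescribed direct-sum decompositions, the result reduces cleanly to the standard base-change (projection) formula for fiber integration.
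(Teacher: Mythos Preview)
The paper states this lemma without proof, treating it as a standard base-change (projection) formula for fiber integration; there is no argument in the text to compare against. Your approach---reducing to a local model in which $f$ and $g$ are projections, identifying the $t$-fiber with the $f$-fiber via $s$, and invoking Fubini---is the usual proof and is correct.

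Your orientation check also goes through. With the paper's conventions, the decomposition $TN=\ker dg\oplus g^\ast TL$ fixes an orientation on $\ker dg$ so that $TN$ carries the product orientation, and likewise $TM=f^\ast TL\oplus\ker df$ orients $\ker df$ exactly with the fiber orientation used to define $f_\ast$. The fiber product is then oriented as $\ker dg\oplus TL\oplus\ker df$, so comparing $T_{t(p)}N\oplus T^{\mathrm{fiber}}_t$ with $T(N\times_L M)$ shows that the $t$-fiber inherits precisely the orientation $o_f$ on $s^\ast\ker df$, with no residual sign. Hence $s\colon t^{-1}(n)\to f^{-1}(g(n))$ is orientation-preserving, as you claim.
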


\subsection{Floer theory}\label{floertheory}

In this subsection we review some basics on the Floer theory.

Let $(M,\omega)$ be a compact symplectic manifold with 
$\dim M = 2n$ and $L$ be a compact oriented spin Lagrangian submanifold in $M$.
Choose an almost complex structure $J$ compatible with $\omega$.
For $k \in \mathbb{Z}_{\geq 0}$ and $\beta \in H_2(M,L;\mathbb{Z})$, 
we denote by 
$u \colon (D,\partial D ;z_0,...,z_k) \to (M,L)$ 
a $J$-holomorphic map $u \colon D \to M$ 
such that $u(\partial D) \subset L$, 
$u_\ast ([D,\partial D]) = \beta$ 
with $(k+1)$ boundary marked points $z_0,...,z_k \in \partial D$ 
which have counter-clockwise order.
We say two such maps $u \colon (D,\partial D ;z_0,...,z_k) \to (M,L)$, 
$u' \colon (D,\partial D ;z'_0,...,z'_k) \to (M,L)$ are equivalent 
if there exists a biholomorphic map $\phi \colon D \to D$ such that 
$u' = u \circ \phi$ and $\phi (z'_i) = z_i$, 
and denote by $u \sim u'$.
We then define the moduli space 
of $(k+1)$-pointed $J$-holomorphic disks bounded by $L$ to be 
the set of equivalence classes
\begin{align}
\M_{k+1,\beta}(L) := 
\{ u \colon (D,\partial D ;z_0,...,z_k) \to (M,L) \} / \sim \nonumber
\end{align}
and denote by $\cM_{k+1,\beta}(L)$ its compactification consisting of stable maps 
when $\beta \neq 0$ or $k \geq 2$.

\begin{proposition}[{\cite[Proposition 7.1.1]{fooo:book2}}]\label{boundary}
The moduli space $\cM_{k+1,\beta}(L)$ has a Kuranishi structure with an orientation 
of dimension $(n+k+\mu(\beta)-2)$ 
where $\mu(\beta)$ denotes the Maslov index of the class $\beta$, and 
we have an isomorphism of the spaces with Kuranishi structures with orientations
\begin{align}
\partial \cM_{k+1,\beta}(L) &\cong \coprod_{k_1+k_2=k+1, \beta_1+\beta_2=\beta} 
(-1)^{\sharp_1} \cM_{k_1+1,\beta_1}(L) {}_{\ev_i} \times_{\ev_0} \cM_{k_2+1,\beta_2}(L) 
\label{boundary}
\end{align}
where $\sharp_1 := k_1k_2 + ik_2 + i + n$ 
and the sum is taken over $(k,\beta)$ satisfying $\beta \neq 0$ or $k \geq 2$.
\end{proposition}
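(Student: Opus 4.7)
The plan is to follow the standard construction of Fukaya--Oh--Ohta--Ono; in fact the statement is exactly \cite[Proposition 7.1.1]{fooo:book2}, and what follows outlines the steps I would carry out.

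First, I would build Kuranishi charts on $\cM_{k+1,\beta}(L)$ by a local obstruction-bundle construction around each stable map $u$. Locally the Kuranishi neighborhood is a finite-dimensional thickening of $\ker D_u\bar\partial$ obtained by choosing an obstruction space that surjects onto $\operatorname{coker} D_u\bar\partial$, equivariant under $\operatorname{Aut}(u)$. The virtual dimension comes out to be the real index of the linearized Cauchy--Riemann operator with totally real boundary conditions along $L$, plus the dimension of the moduli of domains with marked points, minus the automorphisms of the unpointed disk: Riemann--Roch on the disk gives $n + \mu(\beta)$; adding $k+1$ real parameters for the boundary marked points and subtracting $3 = \dim PSL(2,\R)$ yields $n + k + \mu(\beta) - 2$. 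The orientation would be induced from the determinant line of the Cauchy--Riemann operator, trivialized using the (relative) spin structure on $L$ by the standard construction in Fukaya theory.

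Next, I would analyze the codimension-one boundary strata. These arise only from disk bubbling: the domain degenerates to two disks joined at a boundary node, the $(k+1)$ marked points are distributed so that $k_1 + k_2 = k+1$, and the homology classes split as $\beta_1 + \beta_2 = \beta$; the matching condition at the node is precisely the evaluation fiber product $\cM_{k_1+1,\beta_1}(L)\,{}_{\ev_i}\!\times_{\ev_0}\cM_{k_2+1,\beta_2}(L)$. Sphere bubbling occurs in codimension at least two and does not contribute. The sign $\sharp_1 = k_1 k_2 + ik_2 + i + n$ comes from comparing the boundary orientation induced from the outward normal with the product fiber-product orientation: the comparison reshuffles tangent-space factors and absorbs the $n$-dimensional direction $TL$ at the node.

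The hard part will be pinning down this sign. Getting the precise value $\sharp_1$ requires fixing global conventions (orientations of the disk, of $L$, of the $PSL(2,\R)$ automorphism group, of the fiber product, and of the convention for the ordering of marked points at the node) and carefully tracking their interaction on each codimension-one face. This orientation bookkeeping is the substantive content of the FOOO computation; in a self-contained proof I would import their conventions verbatim rather than reprove the sign identity from scratch.
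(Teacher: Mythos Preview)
The paper does not prove this proposition at all: it is stated with the citation \cite[Proposition 7.1.1]{fooo:book2} and immediately followed by a remark on orientations, with no argument supplied. Your plan correctly identifies this as a quoted result and sketches the FOOO construction that underlies it, so there is nothing to compare; if anything, your outline goes beyond what the paper itself provides.
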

\begin{remark}
The orientation of the moduli space 
is determined by the orientation and the spin structure of $L$ \cite[Chapter 8]{fooo:book2}.
\end{remark}
\begin{definition}[cf. {\cite[Section 7]{fukaya:cyc}}]
\label{ainfty}
We can define the following operators.
\begin{align}
\m_{k,\beta}(x_1,\ldots,x_k) &:= (-1)^{\sharp_2} {\ev_0}_\ast 
(\ev_1 \times \cdots \times \ev_k)^\ast (x_1\times\cdots\times x_k), 
\label{floermk}\\
\intertext{
for $\beta \neq 0$ or $k\geq 2$, and
}
\m_{1,0}(x_1) &:= d x_1 \nonumber
\end{align}
for $(k,\beta)=(1,0)$, 
where 
$x_1,\ldots,x_k \in \Omega^\ast(L)$ and 
$\displaystyle \sharp_2 = 
\sum_{i=1}^k i |x_i|' + 1$.
\end{definition}
\begin{theorem}[{\cite[Theorem 7.1]{fukaya:cyc}}]\label{fukayacat}
$(\Omega^\ast(L),\{\m_{k,\beta}\})$ forms 
a unital $\mathrm{G}$-gapped filtered $A_\infty$ algebra for some $\mathrm{G}$.
The constant function $1 \in \Omega^0(L)$ gives the unit.
\end{theorem}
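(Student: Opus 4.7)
The plan is to follow the standard construction of filtered $A_\infty$ algebras from Kuranishi moduli of bordered $J$-holomorphic disks, as carried out in \cite{fukaya:cyc} and \cite{fooo:book2}, and check that the structural ingredients already recorded in this subsection (Proposition \ref{boundary}, the Stokes formula and the pullback/pushforward compatibility with fiber products from the lemmas above) supply each axiom. First I would verify well-definedness and degree of the operators: the virtual dimension of $\cM_{k+1,\beta}(L)$ is $n+k+\mu(\beta)-2$, so integration along the fiber of the submersive evaluation $\ev_0$ lowers the degree of $\ev_1^*x_1\wedge\cdots\wedge\ev_k^*x_k$ by $k+\mu(\beta)-2$, giving the map $\m_{k,\beta}$ of degree $2-k-\mu(\beta)$ required by the gapped filtered $A_\infty$ axioms. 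The fact that these operators land in smooth forms on $L$ is the extension of fiber integration to the Kuranishi setting.

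Second, for the $A_\infty$ relations with $(k,\beta)\neq(1,0)$, I would apply the Stokes formula for submersions to $\ev_0$:
\begin{align*}
d\,\m_{k,\beta}(x_1,\ldots,x_k) = \pm\sum_{i=1}^k \m_{k,\beta}(x_1,\ldots,dx_i,\ldots,x_k) \pm (\ev_0|_{\partial \cM_{k+1,\beta}(L)})_\ast \bigl(\ev_1\times\cdots\times\ev_k\bigr)^\ast(x_1\times\cdots\times x_k).
\end{align*}
Plugging in the boundary description of Proposition \ref{boundary} and using $g^\ast f_\ast=t_\ast s^\ast$ for the two evaluations on the fiber product (so that fiber integration on $\cM_{k_1+1,\beta_1}(L){}_{\ev_i}\times_{\ev_0}\cM_{k_2+1,\beta_2}(L)$ factors through the inner $\m_{k_2,\beta_2}$ followed by the outer $\m_{k_1,\beta_1}$), the boundary term expands as the sum $\sum_{k_1+k_2=k+1,\,\beta_1+\beta_2=\beta}\sum_i \pm\,\m_{k_1,\beta_1}(x_1,\ldots,\m_{k_2,\beta_2}(x_{i+1},\ldots,x_{i+k_2}),\ldots,x_k)$. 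Combining the $dx_i$ term with the boundary decomposition (the $(k_2,\beta_2)=(1,0)$ contribution) yields the $A_\infty$ relation of the statement, provided the sign conventions $\sharp_1=k_1k_2+ik_2+i+n$ and $\sharp_2=\sum i|x_i|'+1$ combine to produce exactly the Koszul sign $(-1)^{|x_1|'+\cdots+|x_i|'}$.

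Third, I would establish unitality of $1\in\Omega^0(L)$ via the forgetful map $\pi\colon\cM_{k+1,\beta}(L)\to\cM_{k,\beta}(L)$ that drops a marked point. Away from $(k,\beta)=(2,0)$, the remaining evaluations factor through $\pi$, so the integrand $\ev_1^*x_1\wedge\cdots\wedge\ev_i^*(1)\wedge\cdots\wedge\ev_k^*x_k=\pi^*(\text{form on }\cM_{k,\beta}(L))$ is a pullback; since the fiber of $\pi$ is positive-dimensional, fiber integration of a pullback vanishes, giving $\m_{k,\beta}(\ldots,1,\ldots)=0$ for $(k,\beta)\neq(2,0)$. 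The case $(k,\beta)=(2,0)$ is direct: $\cM_{3,0}(L)\cong L$ with all three evaluations equal to the identity, so $\m_{2,0}(1,x)=\pm x$ and $\m_{2,0}(x,1)=\pm x$, and the sign convention in $\sharp_2$ is designed to reproduce exactly the unit axiom. Finally, $\mathrm{G}$-gappedness follows from Gromov compactness: the set of classes $\beta$ with $\omega(\beta)\leq E$ supporting non-empty moduli is finite for each $E$, so the submonoid of $2\mathbb{Z}\times\mathbb{R}_{\geq 0}$ generated by $(\mu(\beta),\omega(\beta))$ is discrete.

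The main obstacle is not the formal derivation sketched above but the foundational work of making the fiber integration, Stokes' formula, and the pullback/fiber-product formula of Section \ref{equivcoh} genuinely valid over moduli spaces equipped with Kuranishi structures with corners, and then tracking orientations carefully enough to match $\sharp_1$ and $\sharp_2$ with the Koszul signs. Both issues are addressed in \cite{fukaya:cyc} and \cite{fooo:book2}, and the proof amounts to invoking those constructions together with the boundary formula of Proposition \ref{boundary}.
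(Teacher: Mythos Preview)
The paper does not supply its own proof of this theorem: it is stated as a citation of \cite[Theorem 7.1]{fukaya:cyc}, and the surrounding text only remarks that the pushforward $\ev_{0\ast}$ is defined via CF perturbations and can be computed by ordinary pullback/pushforward under Assumption \ref{regularity}. There is therefore nothing in the paper to compare your argument against beyond the bare citation.

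That said, your sketch is a faithful outline of the standard proof strategy in \cite{fukaya:cyc} and \cite{fooo:book2}: degree count from the virtual dimension, Stokes' theorem on the moduli space combined with the boundary decomposition of Proposition \ref{boundary} and the pullback/fiber-product compatibility to obtain the $A_\infty$ relations, unitality from the forgetful map, and gappedness from Gromov compactness. You also correctly flag that the genuine content lies in making fiber integration, Stokes, and the sign bookkeeping work over Kuranishi structures with corners, which is exactly what those references supply. As a summary of the cited proof your proposal is accurate; just be aware that in the context of this paper the theorem is invoked rather than reproved.
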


The pushforward $\ev_{0\ast}$ 
in the right hand side of (\ref{floermk}) 
is defined using the CF perturbation.
It can however be calculated 
using ordinary pullback and pushforward 
under the following assumption.
\begin{assumption}\label{regularity}
All the moduli spaces concerned in the definition of the $A_\infty$ structure 
are manifolds with corners, and the evaluation maps are submersions.
\end{assumption}

\begin{remark}\label{maslov2} 
The condition that the resulting form in (\ref{floermk}) has nonnegative degree is 
$\deg x_1 +\cdots + \deg x_k +2-\mu(\beta)-k \geq 0$.
Therefore only holomorphic disks with Maslov index $\mu(\beta)\leq 2$ contribute 
the $A_\infty$ structure when $n=1$.
All the holomorphic disks bounded by a moment fiber in a toric manifold 
are classified by Cho-Oh \cite{CO}.
In Section \ref{equivhms} we only consider the cases 
$M = \C P^1$ and $\C$, 
and in these cases the disk with Maslov index less than or equal to $2$ is 
either a constant disk (then the Maslov index is $0$) 
or a disk with Maslov index $2$.
Since there is no nonconstant holomorphic sphere with $c_1 \leq 0$, 
the moduli space $\cM_{k+1,\beta}(L)$ 
compactified with stable disks becomes 
a compact manifold with corners when $\mu(\beta)\leq 2$, 
and the evaluation maps are submersions.
Hence Assumtion \ref{regularity} is satisfied.
\end{remark}

We also have the following formula.
\begin{proposition}[Divisor axiom. {\cite[Proposition 6.3]{cho}}, see also {\cite[Proof of Lemma 11.8]{fooo:toric1}}]
Let $\theta \in \Omega^1(L)$ such that $d\theta=0$, then 
\begin{align}
\sum_{i=1}^k \m_{k,\beta}
(x_1,\ldots,x_{i-1},\theta,x_i,\ldots,x_{k-1}) 
&= \langle \partial \beta,\theta \rangle \m_{k-1,\beta}(x_1,\ldots,x_{k-1}) \label{divisor}
\end{align}
if $\beta \neq 0$ or $k \geq 2$.
\end{proposition}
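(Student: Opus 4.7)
The plan is to combine all $k$ summands of the left-hand side by means of forgetful maps of moduli spaces together with the closedness of $\theta$. Under Assumption \ref{regularity}, for each $i = 1, \ldots, k$ the map
\[
\pi_i \colon \cM_{k+1, \beta}(L) \to \cM_{k, \beta}(L)
\]
forgetting the $i$-th boundary marked point is a smooth oriented submersion of manifolds with corners, and the source evaluations satisfy $\ev_j = \ev_j' \circ \pi_i$ for $j \neq i$, where $\ev_j'$ are the evaluations on the target after renumbering. The fibre of $\pi_i$ over a generic $[u']$ is the open arc of $\partial D$ between the two neighbours $z'_{i-1}$ and $z'_i$ of the forgotten marked point; as $i$ ranges over $\{1, \ldots, k\}$, these arcs tile $\partial D$ up to a measure-zero set.

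Using the composition rule and projection formula for fibre integration (the Stokes-type lemmas stated just above) on the definition of $\m_{k, \beta}$, I would rewrite the $i$-th summand as
\[
\m_{k, \beta}(x_1, \ldots, x_{i-1}, \theta, x_i, \ldots, x_{k-1}) = \varepsilon_i \cdot (\ev_0')_* \Bigl( \bigl( (\pi_i)_* \ev_i^* \theta \bigr) \cdot \ev_1'^* x_1 \wedge \cdots \wedge \ev_{k-1}'^* x_{k-1} \Bigr),
\]
where $\varepsilon_i$ collects all Koszul signs. The function $(\pi_i)_* \ev_i^* \theta$ evaluates at $[u']$ to the integral of $(u')^* \theta$ over the corresponding arc; summing over $i$, the arcs cover $\partial D$, and since $\theta$ is closed,
\[
\sum_{i=1}^k (\pi_i)_* \ev_i^* \theta = \langle \partial \beta, \theta \rangle
\]
is a constant function on $\cM_{k, \beta}(L)$. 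Provided all $\varepsilon_i$ are equal to a common sign $\varepsilon$, pulling this constant out of the pushforward then yields $\varepsilon \, \langle \partial \beta, \theta \rangle \, \m_{k-1, \beta}(x_1, \ldots, x_{k-1})$, which should match the right-hand side.

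The main obstacle is verifying that $\varepsilon_i$ is independent of $i$. Three $i$-dependent sign contributions must combine coherently: the factor $(-1)^{\sharp_2}$ from Definition \ref{ainfty} (which shifts with $i$ through the re-indexing of the $x_j$'s when $\theta$ is inserted at position $i$), the Koszul sign incurred when commuting $\ev_i^* \theta$ past the $\pi_i^*$-pulled forms before applying the projection formula, and the orientation sign on the fibre of $\pi_i$ determined by the convention for inserting a new marked point in the $i$-th slot. I expect these contributions to cancel into a uniform $\varepsilon$; this is the standard sign book-keeping underlying the divisor axiom, carried out in detail in \cite{cho} and \cite{fooo:toric1}. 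Finally, the degenerate case $(k, \beta) = (2, 0)$, in which $\cM_{2, 0}(L)$ is not defined as a stable-disc moduli space, requires a separate direct check; there the identity reduces to graded commutativity of the wedge product together with $\langle \partial 0, \theta \rangle = 0$.
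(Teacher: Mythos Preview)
The paper does not supply its own proof of this proposition; it is stated as a cited result from \cite[Proposition 6.3]{cho} and \cite[Proof of Lemma 11.8]{fooo:toric1}, with no argument given in the text. Your sketch is precisely the standard argument carried out in those references: use the forgetful map $\pi_i\colon\cM_{k+1,\beta}(L)\to\cM_{k,\beta}(L)$, identify its fibre with the $i$-th boundary arc, sum the arc integrals of $(u')^*\theta$ over $i$ to obtain $\int_{\partial D}(u')^*\theta=\langle\partial\beta,\theta\rangle$ (a constant on the moduli space since $\theta$ is closed), and factor this constant out of the pushforward. The sign bookkeeping you isolate as the main obstacle is exactly what \cite{cho} and \cite{fooo:toric1} verify in detail, and your separate treatment of the case $(k,\beta)=(2,0)$, where $\cM_{2,0}(L)$ is not a stable moduli space and $\m_{1,0}=d$ is defined by hand, is correct.
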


\subsection{Equivariant Floer theory}\label{equivfloertheory}
We now proceed to the construction of 
an equivariant Floer $A_\infty$ algebra.
Let $M$ be a compact smooth $G$-manifold 
with a $G$-invariant symplectic structure $\omega$ 
and a $G$-invariant $\omega$-compatible almost complex structure, 
$L$ be its compact oriented spin Lagrangian submanifold preserved by the $G$-action.
From now on we assume Assumption \ref{regularity}.
We first see the following.
\begin{proposition}\label{groupaction}
Under Assumption \ref{regularity}, 
we have the following formulae for $k \geq 2$ or $\beta \neq 0$.
\begin{align}
i_\xx \m_{k,\beta} (x_1,\ldots,x_k) + 
\sum_{i=1}^k(-1)^{|x_1|'+\cdots+|x_{i-1}|'}
\m_{k,\beta}(x_1,\ldots,i_\xx x_i,\ldots,x_k) =0 \nonumber
\end{align}
\end{proposition}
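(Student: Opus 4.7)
The plan is to follow the definition of $\m_{k,\beta}$ through the pullback-pushforward construction and use that the $G$-action on $M$ preserves the Lagrangian $L$, the symplectic form, and the almost complex structure. Since these structures are $G$-invariant, $G$ acts on the moduli space $\cM_{k+1,\beta}(L)$ by postcomposition $u \mapsto g \cdot u$, and every evaluation map $\ev_i \colon \cM_{k+1,\beta}(L) \to L$ is $G$-equivariant. Under Assumption \ref{regularity} the moduli space is a manifold with corners and $\ev_0$ is a submersion, so the integration along the fiber defining $\m_{k,\beta}$ is in the sense of Section \ref{equivcoh}, and the lemmas there apply: $i_\xx$ commutes with equivariant pullback and with pushforward along equivariant submersions.

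The main computation is then a chain of commutations:
\begin{align*}
i_\xx \m_{k,\beta}(x_1,\ldots,x_k)
&= (-1)^{\sharp_2} \, i_\xx \,{\ev_0}_\ast (\ev_1\times\cdots\times\ev_k)^\ast (x_1\times\cdots\times x_k) \\
&= (-1)^{\sharp_2} \, {\ev_0}_\ast \, i_\xx (\ev_1\times\cdots\times\ev_k)^\ast (x_1\times\cdots\times x_k) \\
&= (-1)^{\sharp_2} \, {\ev_0}_\ast (\ev_1\times\cdots\times\ev_k)^\ast \, i_\xx (x_1\times\cdots\times x_k),
\end{align*}
where in the last step $i_\xx$ on $L^k$ refers to the diagonal fundamental vector field, and equivariance of $\ev_1\times\cdots\times\ev_k$ with respect to the diagonal $G$-action on $L^k$ is what validates the commutation.

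Next I would apply the standard Leibniz rule for the interior product along a diagonal vector field on a product of forms,
\[ i_\xx (x_1\times\cdots\times x_k) = \sum_{i=1}^k (-1)^{|x_1|+\cdots+|x_{i-1}|} x_1\times\cdots\times i_\xx x_i\times\cdots\times x_k, \]
and then reassemble each term as $\m_{k,\beta}(x_1,\ldots,i_\xx x_i,\ldots,x_k)$. Since $|i_\xx x_i|' = |x_i|'-1$, recomputing $\sharp_2$ with $x_i$ replaced by $i_\xx x_i$ shifts the sign by $(-1)^{i}$. Combined with $|x_j|' = |x_j|-1$, the total sign on the $i$-th term becomes $-(-1)^{|x_1|'+\cdots+|x_{i-1}|'}$, which yields exactly the claimed identity.

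The only substantive input is the equivariance of the moduli-theoretic construction (the $G$-action lifts to $\cM_{k+1,\beta}(L)$ and commutes with all evaluations), together with the commutation of $i_\xx$ with pullback and fiber integration along equivariant submersions established in Section \ref{equivcoh}. The hard part is really ensuring that under Assumption \ref{regularity} one is entitled to interpret ${\ev_0}_\ast$ as ordinary fiber integration so that the lemmas of Section \ref{equivcoh} apply verbatim; once that is granted the rest is straightforward sign bookkeeping.
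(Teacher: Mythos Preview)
Your proof is correct and follows essentially the same route as the paper's: commute $i_\xx$ through ${\ev_0}_\ast$ and $(\ev_1\times\cdots\times\ev_k)^\ast$ using equivariance, apply the Leibniz rule for the interior product on $x_1\times\cdots\times x_k$, and then track the shift in $\sharp_2$ caused by $|i_\xx x_i|'=|x_i|'-1$ to obtain the sign $-(-1)^{|x_1|'+\cdots+|x_{i-1}|'}$. Your write-up is in fact slightly more detailed than the paper's in explaining why the commutations are valid (the induced $G$-action on $\cM_{k+1,\beta}(L)$ and equivariance of the evaluation maps), but the argument is the same.
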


\proof 
\begin{align}
i_\xx \m_{k,\beta} (x_1,\ldots,x_k) 
&= (-1)^{\sharp_2}\ev_{0\ast}(\ev_1\times \cdots \times \ev_k)^\ast
(i_\xx(x_1\times \cdots \times x_k)) \nonumber\\
&= \sum_{i=1}^k (-1)^{\sharp_2+|x_1|+\cdots+|x_{i-1}|}
\ev_{0\ast}(\ev_1\times \cdots \times \ev_k)^\ast 
(x_1\times \cdots \times i_\xx x_i \times 
\cdots \times x_k) \nonumber\\
&= \sum_{i=1}^k(-1)^{\sharp_2+|x_1|+\cdots+|x_{i-1}|
+\sharp_2-i}
\m_{k,\beta}(x_1,\ldots, i_\xx x_i ,\ldots,x_k) \nonumber\\
&= -\sum_{i=1}^k(-1)^{|x_1|'+\cdots+|x_{i-1}|'}
\m_{k,\beta}(x_1,\ldots, i_\xx x_i ,\ldots,x_k) .\nonumber
\end{align}
\qed

\begin{corollary}
$\Omega^\ast(L)$ with the Floer $A_\infty$ structure 
$\{ \m_{k,\beta} \}$ 
becomes a unital $\g$-differential gapped filtered $A_\infty$ algebra over $\C$ 
in the sense of Definition \ref{gfA}.
\end{corollary}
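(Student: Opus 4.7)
The plan is to verify the conditions of Definition \ref{gfA} one by one, noting that nearly all of the required structure has already been assembled in the preceding subsections, so the work reduces to a bookkeeping argument.

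First, I would check that each $\Omega^*(L)$ is a $\g$-differential space in the sense of Section \ref{sub 2.1}. This is exactly the content of the discussion in Section \ref{equivcoh}: the operators $d$, $L_{\underline X}$, $i_{\underline X}$ on $\Omega^*(L)$ (for the complexified Lie algebra $\g = Lie(G)\otimes_\R \C$, using that $L$ is preserved by the $G$-action so that $\underline X \in \Gamma(TL)$) satisfy the five Cartan magic formulae. Moreover, by definition of the Floer operations, $\m_{1,0}=d$ on $\Omega^*(L)$, so the differential of the $A_\infty$ algebra coincides with the differential of the $\g$-differential space, as required.

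Second, I would invoke Theorem \ref{fukayacat} to conclude that $(\Omega^*(L), \{\m_{k,\beta}\})$ is a unital $\mathrm G$-gapped filtered $A_\infty$ algebra over $\C$, with the constant function $1$ serving as the unit. This handles the $A_\infty$ relations and unitality axioms required by Definition \ref{gfA}.

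Finally, the only remaining condition is the compatibility equation (\ref{interior}) between $i_X$ and $\m_{k,\beta}$ for $(k,\beta)\neq(1,0)$. But this is exactly the statement of Proposition \ref{groupaction}, which was just proved using the fact that, under Assumption \ref{regularity}, the operations $\m_{k,\beta}$ are given by pullback along the $G$-equivariant evaluation maps $\ev_1,\ldots,\ev_k$ followed by pushforward along the $G$-equivariant submersion $\ev_0$, together with the compatibility of $i_{\underline X}$ with pullback, with pushforward along the fiber (the previous lemma), and with the exterior product. Assembling these three items gives all of the data and axioms of Definition \ref{gfA}. There is no serious obstacle: the corollary is essentially a repackaging of Proposition \ref{groupaction} and Theorem \ref{fukayacat} in the language of Section \ref{sub 2.4}.
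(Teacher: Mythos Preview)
Your proposal is correct and matches the paper's intent: the corollary is stated without proof precisely because it follows immediately from combining Theorem \ref{fukayacat} (unital gapped filtered $A_\infty$ structure), the $\g$-differential space structure on $\Omega^*(L)$ from Section \ref{equivcoh}, and Proposition \ref{groupaction} (the compatibility relation (\ref{interior})). There is nothing to add.
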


As discussed in Section \ref{algprelim}, 
its Cartan model 
$(\Omega^\ast_\bcar(L),\{ \m_{k,\beta}^\bcar \})$ becomes 
a unital gapped filtered $A_\infty$ algebra.

The last step is to define the $\Z/2\Z$-graded filtered $A_\infty$ algebra 
over the Novikov field by substituting equivariant parameters.
From now on we assume that $G$ is a compact torus of dimension $r$.
\begin{definition}\label{eqF}
Let $\lambda \colon S\g^\vee \to \Lambda_0$ be an equivariant parameter.
We call $(\Omega^\ast (L)^\g \hat\uotimes \Lambda , \m_k^\lambda )$ 
the equivariant Floer $A_\infty$ algebra of $L$, 
where $\hat\uotimes$ denotes the $\Z/2\Z$-graded completed tensor.
This is a unital $\Z/2\Z$-graded gapped filtered $A_\infty$ algebra over $\Lambda$ 
with the unit $1 \in \C \subseteq \Omega^0(L)^\g$.
\end{definition}

Lastly we introduce the deformation of an equivariant Floer $A_\infty$ algebra 
by a bounding cochain, slightly generalizing the construction 
of Section \ref{algprelim} (see also \cite[Section 4]{fooo:toric1}).

\begin{definition}\label{boundingc}
Take $\bb = \bb_0 + \bb_+$ with $\bb_0 \in \Omega^1(L;\C)$ 
and $\bb_+ \in \Omega^1(L;\C) \hat\uotimes \LLp$ 
satisfying $d\bb =0$, 
and set $\rho \colon \pi_1(L) \to \C^\ast$ to be 
$\rho (\gamma) := e^{\langle \bb_0,\gamma \rangle}$ for $\gamma \in \pi_1(L)$.
We define the operators $\m_k^{\bb,\lambda}$ deformed by $\bb$ 
by the following formula:
\begin{align}
\m_k^{\bb,\lambda}(x_1,...,x_k) :=& 
\sum_\beta \rho(\partial\beta)T^{\omega(\beta)/2\pi}
\sum_{l_0+\cdots+l_k=l}
\m_{k+l,\beta}
(\overbrace{\bb_+,\ldots,\bb_+}^{l_0},x_1,
\ldots,x_k,
\overbrace{\bb_+,...,\bb_+}^{l_k}) \nonumber\\
&- \left\{ \begin{array}{lc}
0 & k \neq 0,1\\
\sum_{j=1}^r\lambda(F^j)i_{e_j}(x_1) & k = 1\\
\sum_{j=1}^r\lambda(F^j)i_{e_j}(\bb) & k=0
\end{array} \right. \label{evaluate}
\end{align}
We call $\bb = \bb_0+\bb_+$ a bounding cochain of $L$ 
if it satisfies the following:
\begin{align}
i_X \bb = i_X \bb_0 + i_X \bb_+ \in \Lambda_0 \cdot 1, \qquad
& \m_0^{\bb,\lambda}(1) \in \Lambda_0 \cdot 1. \label{mc}
\end{align}
The second equation is called the weak Maurer-Cartan equation.

If $\bb=\bb_0+\bb_+$ is a bounding cochain, 
$\{ \m_k^{\bb,\lambda} \}$ gives a structure of 
a unital gapped filtered $A_\infty$ algebra 
on $\Omega^\ast(L)^\g \hat\uotimes \Lambda$ 
which satisfies 
$\m_1^{\bb,\lambda} \circ \m_1^{\bb,\lambda} = 0$, 
i.e.\ we can define its cohomology with respect to the differential 
$\m_1^{\bb,\lambda}$.
\end{definition}

Using (\ref{divisor}) 
the curvature of this deformed $A_\infty$ structure is calculated as
\begin{align}
\m_0^{\bb,\lambda} (1) &= 
\sum_{\beta\neq 0} 
T^{\omega(\beta)/2\pi} 
e^{\langle \bb,\partial \beta\rangle}
\m_{0,\beta}(1) 
-\sum_{j=1}^r\lambda(F^j)i_{e_j}(\bb). \nonumber
\end{align}

Take $\rho$ a $\C$-local system of rank $1$ on $L$ which is expressed as 
$\rho(\gamma) = e^{\langle\bb_0,\gamma\rangle}$ for all $\gamma \in \pi_1(L)$ 
with some closed $1$-form $\bb_0 \in \Omega^1(L;\C)$, 
and $\bb_+\in\Omega^1(L;\C)\hat\otimes\LL_+$, such that 
$\bb=\bb_0+\bb_+$ is a bounding cochain.
Then $\bb_0$ can be regarded as a choice of branch associated to $\rho$ 
which affects the curvature term of the $A_\infty$ structure only.
Therefore we may regard $\rho$ instead of $\bb_0$ as part of the relevant data 
consisting of an object of a Fukaya category.

\begin{definition}\label{lagbrane}
We call the triple $\lag = (L,\rho,\bb_+)$ an equivariant Lagrangian brane, 
where $\rho$ is a $\C$-local system of rank $1$ on $L$ such that there exists 
a closed $1$-form $\bb_0 \in \Omega^1(L;\C)$ satisfying $\rho(\gamma)=e^{\langle\bb_0,\gamma\rangle}$ 
for all $\gamma\in\pi_1(L)$ and $\bb:=\bb_0+\bb_+$ is a bounding cochain 
in the sense of Definition \ref{boundingc}.
\end{definition}
As we only use equivariant Lagrangian branes, 
we sometimes call them Lagrangian branes for short hereafter.
We also call $L$ the underlying Lagrangian submanifold of $\lag$.

\subsection{Category of equivariant Lagrangian branes}\label{fukaya}

Let $G$ be a compact connected Lie group 
and $\g := Lie(G)\otimes_\R\C$ be its complexified Lie algebra, 
$M$ be a compact $G$-manifold equipped with a $G$-invariant symplectic form $\omega$ 
and a $G$-invariant $\omega$-compatible almost complex structure.
In this section we first introduce 
the $\g$-differential gapped filtered $A_\infty$ category 
of equivariant Lagrangian branes and 
then deform it with bounding cochains 
to get a Fukaya $A_\infty$ category over $\LL$.

Take a finite collection of pairs of 
compact oriented spin Lagrangian submanifolds preserved by the $G$-action 
and a $\C$-local system of rank $1$ on it 
$\Lag = \{ (L,\rho) \}$, 
such that 
(i) each $L$ satisfies Assumption \ref{regularity}, 
(ii) any pair $L$ and $L'$ either coincide or do not intersect, 
and (iii) each $\rho$ can be expressed as 
$\rho(\gamma)=e^{\langle \bb_{0},\gamma \rangle}$ for some closed $1$-form 
$\bb_{0}\in\Omega^1(L;\C)$.
We first construct a unital $\g$-differential gapped filtered $A_\infty$ category 
$\F_\Lag$ with the set of objects $\Lag$ as follows.

Set 
\begin{align*}
\cF_\Lag((L,\rho),(L',\rho')) &:= 
\left\{ \begin{array}{lc}
\Omega^\ast(L;\hhom(\rho,\rho')) & {\rm if\ } L=L',\\
0 & {\rm otherwise}
\end{array} \right.
\end{align*}
Then this $\cF_\Lag((L,\rho),(L',\rho'))$ is a $\g$-differential space 
with the operators $L$ and $i$.

Take a sequence of $(k+1)$ objects $(L_0,\rho_0),\ldots,(L_k,\rho_k)$ 
such that $L_0=\cdots=L_k=:L$.
Recall the evaluation maps from the compactified moduli space of 
holomorphic disks 
\begin{align}
\xymatrix{
L^k && \cM_{k,\beta}(L) \ar[ll]_-{\ev_{1}\times\cdots\times\ev_{k}}
\ar[r]^-{\ev_0} & L.
} \nonumber
\end{align}
Denote $\rho_{j-1,j} := 
\hhom(\rho_{j-1},\rho_{j})$ 
and $\rho_{0,k} := 
\hhom(\rho_0,\rho_k)$.
Consider $1$-parameter families of evaluation maps 
$\ev_{j,t}$ along the $j$-th boundary arc $\partial_j$ 
connecting the $j$-th marked point to the $(j+1)$-th marked point for $0\leq j\leq k$ 
and $\partial_k$ connecting the $k$-th marked point to the $0$-th marked point, 
and define $P_j$ to be the bundle isomorphism 
$\ev^\ast_j\rho_{j'-1,j'}\to\ev^\ast_{j+1}\rho_{j'-1,j'}$ between the pull back of local systems 
along $\ev_{j,t}$ obtained by parallel transport 
(where $\ev_{k+1} := \ev_0$).
$P_j$'s are 
independent of the choices of $\ev_{j,t}$, and 
we define
\begin{align*}
\m_{k,\beta}(x_1,\ldots,x_k) &:=
(-1)^\#\ev_{0\ast}(\ev_1^\ast(\alpha_1)\wedge\cdots\wedge\ev_k^\ast(\alpha_k) \\
& \otimes P_k(s_k)\circ (P_k\circ P_{k-1}) (s_{k-1}) 
\circ\cdots\circ (P_k\circ \cdots\circ P_1)(s_1) ) \\
\intertext{for $\beta\neq 0$ or $k\geq 2$ where $\# = \sum_{i=1}^k i|x_i|' +1$, and}
\m_{1,0}(x_1) &:= dx_1 
\end{align*}
for $x_1=\alpha_1\otimes s_1\in \Omega^\ast(L;\hhom(\rho_{0},\rho_{1})) , \dots , 
x_k = \alpha_k\otimes s_k \in \Omega^\ast(L;\hhom(\rho_{k-1},\rho_{k}))$.

It is easy to see that these $\m_{k,\beta}$'s satisfy $A_\infty$ relations 
by carrying out the proof of Theorem \ref{fukayacat} with local systems.
They also satisfy the compatibility with the interior product,
\begin{align}
i_X \m_{k,\beta} (x_1,\ldots,x_k) + \sum_{i=1}^k (-1)^{|x_1|'+\cdots+|x_{i-1}|'} 
\m_{k,\beta}(x_1,\ldots,i_X x_i,\ldots,x_k) = 0 . \nonumber
\end{align}
These operators satisfy gapping conditions and the quadruple 
$(\cF_\Lag,\{\m_{k,\beta}\},L,i)$ forms 
a $\g$-differential gapped filtered $A_\infty$ category.

Next, take a finite set of equivariant Lagrangian branes 
$\{ \lag_{\alpha} = (L,\rho_\alpha,\bb_{+,\alpha}) \}$ 
with $(L,\rho_\alpha)\in\Lag$ 
and choose for each $\lag_{\alpha}$ 
a closed $1$-form $\bb_{0,\alpha} \in \Omega^1(L;\C)$ of the local system $\rho_\alpha$ 
such that 
$\rho_\alpha(\gamma)=e^{\langle \bb_{0,\alpha},\gamma \rangle}$ 
and that $\bb_\alpha := \bb_{0,\alpha}+\bb_{+,\alpha}$ is a bounding cochain.
We denote by $\lag$ the set of such pairs $(L,\bb_\alpha)$.

We're going to construct an uncurved $A_\infty$ category $\F_\lag$ 
with the set of objects $\lag$.
For each $(L,\bb_\alpha)$ we define its curvature $c_\alpha\in\Lambda_+$ by 
$\m_0^{\bb,\lambda}(1)=c_\alpha\cdot 1_{\lag_\alpha}$.

\begin{definition}\label{F}
$\F_\lag$ is 
a $\Z/2\Z$-graded uncurved $A_\infty$ category over $\Lambda$ 
with 
$\ob\F_\lag = \lag$, 
\begin{align}
\F_\lag((L,\bb_\alpha),(L',\bb_{\alpha'})) &:= 
\left\{ \begin{array}{cl}
\cF_\Lag((L,\rho_\alpha),(L',\rho_{\alpha'}))^\g\hat\uotimes\Lambda&
{\rm if \ }L=L',i_X(\bb_\alpha)=i_X(\bb_{\alpha'})\\
&{\rm for \ all \ }X\in\g {\rm \ and \ }
c_\alpha=c_{\alpha'}\\
0 & {\rm otherwise}
\end{array}\right. \nonumber
\end{align}
with operators $\m_0 := 0$ and
\begin{align}
\m_k(x_1,\ldots,x_k) &:= 
\m_k^\bb(x_1,\ldots,x_k) - \left\{ \begin{array}{lr}
0 & k \neq 0,1\\
\sum_{j=1}^r\lambda(F^j)i_{e_j}(x_1) & k = 1
\end{array} \right. \nonumber
\end{align}
for $k \geq 1$, where
\begin{align}
\m_k^{\bb}(x_1,...,x_k) :=& 
\sum_\beta T^{\omega(\beta)/2\pi}
\sum_{l_0+\cdots+l_k=l}
\m_{k+l,\beta}
(\overbrace{\bb_{+,0},\ldots,\bb_{+,0}}^{l_0},x_1,
\ldots,x_k,
\overbrace{\bb_{+,k},...,\bb_{+,k}}^{l_k}) \nonumber
\end{align}
whenever the spaces of morphisms concerned are nonzero.
\end{definition}
Note that the $\m_{k,\beta}$'s appeared in the above are the operators 
defined in this subsection.
It is easy to check that $\m_k$'s satisfy the $A_\infty$ relations.

\section{Matrix factorizations}\label{mf}

\subsection{Preliminaries on categories}\label{categories}
In \S \ref{categories} we recall some basic definitions on derived categories.
See \cite[\S5]{dyccom} for more details.

Let $k$ be an algebraically closed field of characteristic zero.
For a trianglurated category $\mathcal{T}$,
the idempotent completion of $\mathcal{T}$ is denoted by $\overline{\mathcal{T}}$ 
and $\mathcal{T}$ is said to be idempotent complete if $\mathcal{T}$ is naturally equivalent to $\overline{\mathcal{T}}$ 
(see, e.g., \cite{balide}).

In this section, a differential $\Z/2\Z$-graded category over $k$ is briefly called a dg-category. 
Let $T$ be a dg-category.
The $k$-linear category $[T]$ is defined by taking even cohomology $H^0$ of $T$. 
Let $T^\op$-$\module$ be the $k$-linear category of right $T$-modules, i.e., a category of dg functors from $T^\op$ to the dg-category of $\Z/2\Z$-graded complexes over $k$.
We denote by $D(T^\op)$ the localization of $T^\op$-$\module$ with respect to the set of objectwise quasi-isomorphisms.
Then $D(T^\op)$ is an idempotent complete triangulated category which admits arbitrary coproducts.
By the Yoneda embedding, $[T]$ is considered as a full subcategory of $D(T^\op)$. 
Let $[\widehat{T}_{\pe}]$ be the full subcategory of compact objects in $D(T^\op).$
Then $[\widehat{T}_{\pe}]$ is the smallest triangulated subcategory of $D(T^\op)$ containing $[T]$ and closed under direct summands.
\begin{remark}\label{label}
For a dg-category $T,$ an associated $\Z/2\Z$-graded $A_\infty$ category $T_\infty$ is defined as follows:\\
The set of objects of $T_\infty$ is the same as $T$.
For objects $X$ and $Y$, the morphism space $T_\infty(X, Y)$ is $T(Y, X)$ where $T(Y, X)$ is the morphism space of the dg-category $T.$
The $A_\infty$ structure $\{\m_k\}$ are defined by
 \[\m_1(x_1):=dx_1, \ \m_2(x_1, x_2):=(-1)^{|x_1|}x_1 \cdot x_2,\ \m_k=0 \ (k \ge 3).\]
\end{remark}
\subsection{Preliminaries on matrix factorizations}
Let $R$ be a commutative regular $k$-algebra with finite Krull dimension $n$. 
Take $w \in R \setminus k$.
We define a matrix factorization of $w$ by the pair $(P, d_P)$, 
where $P=P^0 \oplus P^1$ is a $\Z/2\Z$-graded finitely generated projective $R$-module 
and $d_P \in \mathrm{End}^{\mathrm{odd}}(P)$ is an $R$-linear morphism of odd degree with $d_P^2=w \cdot \mathrm{id}_P$.
Then $d_P$ consists of $\varphi \in \Hom_R(P^1, P^0)$ 
and $\psi \in \Hom_R(P^0, P^1)$ with 
$\varphi \circ \psi=w \cdot \mathrm{id}_{P_0}$, $\psi \circ \varphi = w \cdot \mathrm{id}_{P_1}$.
For matrix factorizations $(P, d_P)$ and $(P', d_{P'})$,  
 the $\Z/2\Z$-graded module of $R-$ linear morphisms from $P$ to $P'$ 
 with a differential 
 \[d(f):=d_{P'} \circ f-(-1)^{|f|} f \circ d_{P}\] is denoted by  $\mf(P, P')$.
 Here $f$ is a homogeneous $R$-linear morphism and $|f|\in \Z/ 2\Z$ is the degree of $f$.
 These data define a dg-category $\mf(w)$, where compositions of morphisms are naturally defined.
 Then $[\mf(w)]$ is a triangulated category.
 
Set $S:=R/w$.
Let $D^b(S)$ be the derived category of complexes of $S$-modules with finitely generated total cohomology.
A complex of $S$-modules is called perfect if it is quasi-isomorphic to a bounded complex of finitely generated projective $S$-modules. 
We denote by $D^b_\perf(S)$ the subcategory of perfect complexes in $D^b(S)$.
Then $D^b_\perf(S)$ is a thick subcategory of $D^b(S)$ and the Verdier quotient $D^b(S)/D^b_\perf(S)$ is denoted by $\underline{D}^b(S)$,
which is called a stabilized derived category of $S$.
In some references, a stabilized derived category is also called a triangulated category of singularity.
There exists a triangulated functor 
\begin{align}\label{cok}
\mathrm{cok} : [\mf(w)] \to \underline{D}^b(S),
\end{align}
which sends a matrix factorization $(P, d_P)$ to $\mathrm{cok}(\varphi)$.
Moreover, this functor gives an equivalence of triangulated categories. 

Let $\mathrm{L}$ be a finitely generated $S$-module.
Then $\mathrm{L}$ is naturally considered as an object of $\underline{D}^b(S)$.
A matrix factorization  $\mathrm{L}^\stab$ is defined by $\mathrm{cok}(\mathrm{L}^\stab)=\mathrm{L}$,
which is called a stabilization of $\mathrm{L}$.

Let $f_1, f_2, \dots, f_m$ be a regular sequence in $R$ and let $I \neq R$ be the ideal generated by $f_1, f_2, \dots, f_m$.
We assume that $I$ contains $w$.
Take $w_1, w_2, \dots, w_m \in R$ with $w=w_1f_1+w_2f_2+\cdots + f_m w_m$. 
Let $V \cong R^m$ be the free $R$-module of rank $m$ with a basis $e_1, e_2, \dots, e_m$
and let $e_1^\vee, e_2^\vee, \dots e_m^\vee$ be the dual basis.
The contraction by $e_i^\vee$ is denoted by $\iota_i \in \mathrm{End}\big(\displaystyle{\bigwedge^*} V\big)$.
We define $s_0, s_1\in \mathrm{End}\big(\displaystyle{\bigwedge^*} V\big)$ by 
\begin{align*}
s_0&:=f_1\iota_1+f_2\iota_2+\cdots+f_m \iota_m\\
s_1&:=w_1e_1\wedge+w_2e_2\wedge+\cdots+w_me_m\wedge.
\end{align*}
Then we easily see that the $\Z/2\Z$-graded $R$-module $\displaystyle{\bigwedge^*}V$ equipped with the odd degree morphism $s_0+s_1$ is a matrix factorization of $w$.
By \cite[COROLLARY 2.7]{dyccom}, this matrix factorization is a stabilization of the $S$-module $R/I$.
If $R$ is a local ring with the maximal ideal $\m$ and  $I=\m$, then this stabilization is denoted by $k^\stab$.

Let $\crit(w)$ be the critical locus of $w$, i.e., the scheme-theoretic zero locus of $dw$.
Set \[\sing(S):=\crit(w) \cap \Spec(S),\]
which is the singular locus of $\Spec(S)$.
If $\crit(w)$ (resp. $\sing(S)$) is zero-dimensional, then we say that  $w$ (resp. $S$) has isolated singularities.

Let $\m$ be a maximal ideal of $R$ and
let $\widehat{R}_\m$ be the completion of the local ring $R_\m$with respect to the $\m$-adic topology.
The element of $\widehat{R}_\m$ corresponding to $w$ is denoted by $\hat{w}_\m$.
By \cite[THEOREM 5.7]{dyccom}, it follows that $[\mf(\hat{w}_\m)]$ is idempotent complete
if $w \in \m$ and $\hat{w}_\m$ has isolated singularities. 
There exists a restriction functor from $\mf(w)$ to $\mf(\hat{w}_\m)$.
By the equivalence (\ref{cok}) and \cite[Theorem 2.10]{orlfor} (see also \cite[Proposition 3.4]{orlfor}),
we see that these restriction functors give an equivalence between triangulated categories
\[\overline{[\mf({w})]} \cong \prod_{\m \in \sing(S)} [\mf(\hat{w}_\m)]\]
if $S$ has isolated singularities.
By \cite[THEOREM 5.2, COROLLARY 5.3 and THEOREM 5.7]{dyccom}, we have the following:
\begin{theorem}
Suppose that $(R, \m)$ is a local $k$-algebra and $w \in \m$ has isolated singularities.
Then $k^\stab$ split-generates $[\mf(\hat{w}_\m)]$, i.e.,
the smallest triangulated subcategory of $[\mf(\hat{w}_\m)]$ 
containing $k^\stab$ and closed under direct summand is $[\mf(\hat{w}_m)]$.
Set $A:=\mf(k^\stab, k^\stab)$, which is considered as a dg-category with one object.
Then, the Yoneda embedding gives an equivalence between triangulated categories 
\[[\mf(\hat{w}_\m)] \cong [\widehat{A}_\pe].\]
\end{theorem}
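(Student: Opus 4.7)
The strategy is to combine the equivalence $\mathrm{cok}: [\mf(\hat w_\m)] \xrightarrow{\sim} \underline D^b(\hat S_\m)$ recorded in (\ref{cok}), with $\hat S_\m := \hat R_\m/\hat w_\m$, and the Morita-theoretic principle due to Keller: an idempotent complete triangulated category equipped with a compact split-generator $G$ is equivalent to $[\widehat A_\pe]$, where $A$ is the endomorphism dg-algebra of $G$, via the Yoneda functor. The previously recalled idempotent completeness of $[\mf(\hat w_\m)]$ under the isolated singularity hypothesis makes both halves of this machine available.

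To establish the split-generation statement, I would transport the question through $\mathrm{cok}$. The matrix factorization $k^\stab$ corresponds to the residue field $k \in \underline D^b(\hat S_\m)$, so it suffices to show that $k$ split-generates the singularity category. The isolated singularity hypothesis gives $\sing(\hat S_\m) = \{\hat\m\}$, so $\hat S_\m$ is regular on the punctured spectrum. Every object of $\underline D^b(\hat S_\m)$ can be represented by a maximal Cohen--Macaulay module $M$; its restriction to the punctured spectrum is perfect and therefore vanishes in the singularity category. A standard dévissage on the local cohomology then writes $M$, modulo perfect summands, as an iterated extension of copies of $k$, placing it in the thick subcategory generated by $k$.

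For the Yoneda equivalence, set $A := \mf(k^\stab, k^\stab)$, viewed as a one-object dg-category. Since $k^\stab$ is compact, the dg-functor $X \mapsto \mf(k^\stab, X)$ lands in perfect $A$-modules and induces a triangulated functor $[\mf(\hat w_\m)] \to [\widehat A_\pe]$. Full faithfulness at the generator $k^\stab$ is tautological; both sides then extend in parallel under cones and direct summands, on the left by the split-generation just proved and on the right by the defining property of $[\widehat A_\pe]$ as the thick closure of $A$. Hence the functor is an equivalence.

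The technically substantive step is the dévissage. The formalism of thick subcategories is routine, but the content lies in showing that MCM modules over a complete isolated hypersurface singularity are controlled by the residue field at the closed point. This is essentially Orlov's localization theorem for singularity categories and rests on the fact that, because $\hat R_\m$ is regular, the Koszul complex provides a finite resolution of $k$ which stabilizes to $k^\stab$; the isolated singularity hypothesis then concentrates all the singular information at $\hat\m$, so no further generators are needed. Once this is granted, the Yoneda step is formal and requires no genuinely new input.
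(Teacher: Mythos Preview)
The paper does not supply its own proof of this theorem; it records the statement as a direct consequence of \cite[Theorem 5.2, Corollary 5.3, Theorem 5.7]{dyccom}. Your proposal is therefore not competing with an argument in the paper but is, in effect, a sketch of what lies behind that citation, and at the architectural level it matches Dyckerhoff's approach: transport via $\mathrm{cok}$ to $\underline{D}^b(\hat S_\m)$, establish split-generation by the residue field, then invoke Keller-type Morita theory for the Yoneda equivalence.

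That said, your d\'evissage step contains a real imprecision. You claim that a maximal Cohen--Macaulay module $M$ is, ``modulo perfect summands, an iterated extension of copies of $k$.'' This is false as stated: over $k[[x,y]]/(xy)$, the MCM module $k[[x]]$ has infinite length and is not obtained from $k$ by finitely many extensions in $D^b(\hat S_\m)$. Dyckerhoff's actual argument is not a finite-length d\'evissage but a generation statement in the \emph{large} category $[\mf^\infty(\hat w_\m)]$: one shows that $\Hom(k^\stab, X[n])=0$ for all $n$ forces $X=0$, using the Koszul resolution of $k$ together with regularity on the punctured spectrum, and split-generation of the compact objects then follows formally from compact generation plus idempotent completeness. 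Your final paragraph gestures toward the right ingredients (Orlov localization, the Koszul complex), but the ``iterated extension of copies of $k$'' description misrepresents the mechanism and would not go through if taken literally.
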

Let $W:=\Hom_k(\m/\m^2, k)$ be the Zariski tangent space of $(R, \m)$.
A Hessian matrix of $w$ gives a quadratic form on $W$ and let $\cl(w)$ be the corresponding Clifford algebra.
If $\m$ is a non-degenerate critical point of $w$, i.e., Hessian is non-degenerate, then 
we have a natural inclusion $\cl(-w) \subseteq \mf(k^\stab, k^\stab)$ which gives a quasi-isomorphism (see, e.g., \cite[\S 5.5]{dyccom}). 

We define a dg-category of branes $\br(w)$ by 
\[\br(w):=\prod_{c \in k}\mf(w-c),\]
then $[\br(w)]$ is a triangulated category.
Moreover, if $w$ has isolated singularities, then we have  
\[[\overline{\br(w)}] \cong \prod_{\m \in \crit(w)}[\mf(\hat{w}_\m)].\]
\subsection{Categories of branes for Givental type potential functions}\label{givental}
Let $R$ be a commutative regular $k$-algebra with finite Krull dimension $n$. 
Choose $f,g_1,...,g_r \in R$ and $\lambda_1, \lambda_2, \dots, \lambda_r \in k$.
Assume that $g_1,...,g_r$ are invertible.
For abbreviation, these data is denoted by 
$f - \lambda_1 \log g_1 - \cdots - \lambda_r \log g_r$ or simply by $F$, which is called a Givental type potential function.
Although $F$ is not well-defined as a single-valued function, 
its differential can be defined as:
\begin{align}
dF=df-\lambda_1\dfrac{dg_1}{g_1} - \cdots - \lambda_r \dfrac{dg_r}{g_r}. \nonumber
\end{align}
We define $\crit (F)$ as the scheme theoretic zero-locus of $dF$. 
We assume that $F$ has isolated singularities, i.e., $\crit (F)$ is zero-dimensional. 

Let $\m \in \Spm(R)$ be a maximal ideal of $R$ and $(\widehat{R}_\m, \hat{\m})$ be the completion with respect to the $\m$-adic topology,
where $\hat{\m}$ is the maximal ideal. 
For $h \in R$, the corresponding element in $\widehat{R}_\m$ is denoted by $\hat{h}_\m$.
For an invertible element $h \in \widehat{R}_\m\setminus \hat{\m}$, we define $\log h \in \hat{m}$ by 
\[\log h =\sum_{k=1}^\infty (-1)^{k-1} \dfrac{1}{k} \left( \dfrac{h-h(0)}{h(0)} \right)^k,\]
where $h(0) \neq 0 \in k$ is the value of $h$ at $\hat{\m}$. 
Then $\hat{F}_\m$ at $\m \in \Spm(R)$ 
can be defined as 
\begin{align}
\hat{F}_\m := \hat{f}_\m-\hat{f}_\m(0) - \lambda_1 \log \hat{g}_{1,\m} - \cdots - \lambda_r \log \hat{g}_{r, \m} 
\in \hat{\m}. \nonumber
\end{align}

\begin{definition}
The dg-category of matrix factorizations 
of the Givental type potential $F$ is defined to be
\begin{align}
\br (F) &:= \prod_{\m \in \crit (F)}\mf (\hat{F}_\m). \nonumber
\end{align}
The idempotent complete triangulated category $[\br(F)]$ is also called the category of matrix factorizations of $F.$
\end{definition}
For $\m \in \crit(F),$ the matrix factoriziation $(R/\m)^\stab \in \br(F)$ (or $[\br(F)]$) is simply denoted by $k_\m^\stab$.
Then these matrix factorizations split generate $[\br(F)].$  

\section{Equivariant homological mirror symmetry}\label{equivhms}

In this section we study the cases $M = \C P^1$ and $\C$.
As explained in the introduction, 
we consider the equivariant Floer $A_\infty$ algebras of 
moment fiber Lagrangians of $M$, following 
the study in the non-equivariant case by 
Fukaya-Oh-Ohta-Ono \cite{fooo:toric1}.

\subsection{The case of $\C P^1$}

Let $\C P^1$ be equipped with the $S^1=U(1)$-action 
$[z_0:z_1] \mapsto [z_0:\zeta z_1]$ ($\zeta \in U(1)$), 
an $S^1$-invariant symplectic form $\omega$ with $\omega(\C P^1) = 2\pi$.
Let $\mu \colon \C P^1 \to [0,1]$ be the associated moment map 
and let $L_u := \mu^{-1}(u) \subset \C P^1$ be 
the moment fiber Lagrangian over an interior point $u \in (0,1)$.
By identifying $L_u$ with $S^1$ via the $S^1$-action, 
we choose an orientation of $L_u$ 
such that it is compatible with that of $S^1$.
We choose the standard spin structure of $L_u$ (\cite[Section 8]{MR2057871}).

Let $\g := Lie(S^1)\otimes\C$.
Take a basis $e_1$ of $\g$ consistent with the orientation of $S^1$ and 
an integral basis $\mathbf{e}^1 \in \Omega^1(L_u)^\g \cong H^1(L_u)$, 
such that $\mathbf{e}^1$ coincides with the dual basis $e^1 \in \g^\vee$ 
via the identification $S^1 \to L_u$ given by the $S^1$-action.

As we saw in Remark \ref{maslov2}, 
only constant disks and holomorphic disks 
with Maslov index $2$ contribute the equivariant Floer $A_\infty$ algebras of $L_u$.
There are two holomorphic disks of Maslov index $2$ 
up to automorphisms: 
the one which projects onto $[0,u] \subset [0,1]$ via $\mu$ 
and the other one which projects onto $[u,1] \subset [0,1]$, see \cite{CO}.
We denote their relative homology classes by 
$\beta_1, \beta_2 \in H_2(\C P^1,L_u;\Z)$ respectively.
Then $\langle \mathbf{e}^1,\partial\beta_1\rangle =1$, 
$\omega(\beta_1) = 2\pi u, \, \omega(\beta_2) = 2\pi (1-u)$ 
and $\partial \beta_2 = - \partial \beta_1$.

Take a closed $S^1$-invariant $1$-form $\bb = \bb_0 + \bb_+$ where $\bb_0 \in \Omega^1(L_u)$, 
$\bb_+ \in \Omega^1(L_u) \hat\uotimes \Lambda_+$ and put 
\begin{align}
c_0 := e^{\langle \bb_0,\partial\beta_1 \rangle} \in \C, \qquad
1+c_+ := e^{\langle \bb_+,\partial\beta_1 \rangle} \in \LLp. \label{c}
\end{align}

Take $\lambda \in \Lambda_0$ and take an equivariant parameter 
$\lambda \colon S\g^\vee \cong \C[e^1] \to \Lambda_0$ 
which sends $e^1$ to $\lambda$.
Recall that the curvature term of the equivariant Floer $A_\infty$ algebra 
evaluated at $\lambda$ and 
deformed by $\bb$ is given by
\begin{align}
\m_0^{\bb,\lambda}(1) &= T^{\omega(\beta_1)/2\pi} e^{\langle \bb,\partial \beta_1 \rangle} 
+ T^{\omega(\beta_2)/2\pi} e^{\langle \bb,\partial \beta_2 \rangle} 
- \lambda i_{e_1}(\bb) \nonumber\\
&=T^u c_0(1+c_+) + T^{1-u} c_0^{-1}(1+c_+)^{-1} 
- \lambda \langle \bb,e_1 \rangle . \nonumber\\
\intertext{The differential is calculated as}
\m_1^{\bb,\lambda}(\mathbf{e}^1) &= 
\sum_\beta T^{\omega(\beta)/2\pi} e^{\langle \bb_0,\partial \beta \rangle}
\sum_{l_0+l_1=l} 
\m_{l+1,\beta}(\overbrace{\bb_+,...,\bb_+}^{l_0},e^1,\overbrace{\bb_+,...,\bb_+}^{l_1})
- \lambda i_{e_1}(\mathbf{e}^1) \nonumber\\
&=T^uc_0(1+c_+) - T^{1-u}c_0^{-1}(1+c_+)^{-1} - \lambda \nonumber
\end{align}
by using the divisor axiom.
Givental's potential function for $\C P^1$
\begin{align}
F &= T^u x + T^{1-u}x^{-1} - \lambda \log x 
= X + \dfrac{T}{X} - \lambda \log X + {\rm const.} \nonumber
\end{align}
is recovered from the curvature term $\m_0^{\bb,\lambda}(1)$ 
by introducing the variables $x := c_0(1+c_+)$ and $X := T^u x$.
The condition that the differential $\m_1^{\bb,\lambda}$ vanishes 
coincides with the equation for the critical points
\begin{align}
\partial F = X-T/X-\lambda = 0, \label{criticalpts}
\end{align}
where $\displaystyle \partial := X \frac{\partial}{\partial X}$.

In the following we assume $\lambda \neq \pm 2\sqrt{-1}T^\frac{1}{2}$ 
so that (\ref{criticalpts}) does not have a double root, 
which implies the equivariant Landau-Ginzburg mirror potential $F$ 
does not have a degenerate critical point.
The case $\lambda = \pm 2\sqrt{-1}T^\frac{1}{2}$ can be treated similarly, 
see Remark \ref{degenerate}.

We associate an equivariant Lagrangian brane to a solution $X$ of (\ref{criticalpts}) 
satisfying $0<\val(X)<1$ as follows.
Take the moment fiber $L := L_u$ where $u := \val (X)$, 
take $c_0 \in \C^\ast$ and $c_+ \in \Lambda_+$ such that 
$X = T^uc_0(1+c_+)$.
The positive valuation part of the bounding cochain $\bb_+$ is given by 
$\displaystyle \bb_+ = \log (1+c_+)\mathbf{e}^1$ 
where $\log (1+c_+) := \sum_{k\geq 1}(-1)^{k-1}\frac{c_+^k}{k}$.
The leading term $\bb_0 = b_0\mathbf{e}^1$ is taken such that $e^{b_0} = c_0$, 
and gives a local system $\rho$ on $L_u$ with the monodromy $c_0$ 
($\rho$ is independent of the choice of $\bb_0$).
Then $\bb = \bb_0+\bb_+$ gives a bounding cochain 
since $\mathbf{e}^1$ is an invariant form, 
and we obtain an equivariant Lagrangian brane $(L_u,\rho,\bb_+)$.
This construction gives a one-to-one correspondence 
between the solutions of (\ref{criticalpts}) with $\val(X)\in(0,1)$ and 
Lagrangian branes (with standard spin structures) 
whose equivariant Floer $A_\infty$ algebra is not 
quasi-isomorphic to $0$.
This is compatible with (\ref{c}).

Next we study the solutions of (\ref{criticalpts}).
It has two solutions $X_1,X_2$. 
Then
\begin{enumerate}
\item if $\val (\lambda) \geq \dfrac{1}{2}$, 
both solutions have valuation $u_1=u_2=\dfrac{1}{2}$, i.e.\ 
we consider two Lagrangian branes 
with {\it the same} underlying Lagrangian submanifold $L_{\frac{1}{2}}$ 
but with different bounding cochains, and 
\item if $0 \leq \val (\lambda) < \dfrac{1}{2}$, 
the solutions have valuations $u_1=\val (\lambda)$ and $u_2=1-\val(\lambda)$ 
respectively 
and therefore we have two disjoint underlying Lagrangians 
$L_{u_1}$ and $L_{u_2}$.
We only consider the case $\val(\lambda)>0$ 
because the Lagrangians collapse to points when $\val(\lambda)=0$.
\end{enumerate}
Let us denote the corresponding maximal ideals by $\m_1,\m_2 \in \crit F$ 
and the corresponding bounding cochains by 
$\bb_i=\bb_{0,i}+\bb_{+,i}$ ($i=1,2$).
Since $\m_1^{\bb_i,\lambda}=0$, 
\[
H(\Omega^\ast(L_{u_i})^\g\hat\uotimes\Lambda,\m_1^{\bb_i,\lambda}) 
\cong H^\ast(L_{u_i})\underset{\C}{\otimes}\Lambda
\]
as $\Z/2\Z$-graded $\Lambda$-vector spaces.

Next we see the multiplicative structure.
$\m_2^{\bb_i,\lambda}$ of the equivariant Floer $A_\infty$ algebra 
is calculated as:
\begin{align}
2\m_2^{\bb_i,\lambda}(\mathbf{e}^1,\mathbf{e}^1) 
&= \m_2^{\bb_i,\lambda}(\mathbf{e}^1,\mathbf{e}^1)
+\m_2^{\bb_i,\lambda}(\mathbf{e}^1,\mathbf{e}^1)\nonumber\\
&=\sum_\beta T^{\omega(\beta)/2\pi}e^{\langle\bb_{0,i},\partial\beta\rangle}
\sum_{l_0+l_1+l_2=l}
(
\m_{l+2,\beta}(\overbrace{\bb_{+,i},...,\bb_{+,i}}^{l_0},\mathbf{e}^1,
\overbrace{\bb_{+,i},...,\bb_{+,i}}^{l_1},\mathbf{e}^1,
\overbrace{\bb_{+,i},...,\bb_{+,i}}^{l_2})\nonumber\\
&\qquad +\m_{l+2,\beta}(\overbrace{\bb_{+,i},...,\bb_{+,i}}^{l_0},\mathbf{e}^1,
\overbrace{\bb_{+,i},...,\bb_{+,i}}^{l_1},\mathbf{e}^1,
\overbrace{\bb_{+,i},...,\bb_{+,i}}^{l_2}))\nonumber
\\
&=\sum_\beta T^{\omega(\beta)/2\pi}e^{\langle\bb_{0,i},\partial\beta\rangle}
\langle\bb_{+,i},\partial\beta\rangle
\sum_{l_0+l_1=l}
\m_{l+1,\beta}(\overbrace{\bb_{+,i},...,\bb_{+,i}}^{l_0},\mathbf{e}^1,
\overbrace{\bb_{+,i},...,\bb_{+,i}}^{l_1})\nonumber\\
&=\sum_\beta T^{\omega(\beta)/2\pi}e^{\langle\bb_{0,i},\partial\beta\rangle}
\langle\bb_{+,i},\partial\beta\rangle^2\sum_l\m_l(
\bb_{+,i},...,\bb_{+,i}
)\nonumber\\
&=T^uc_0(1+c_+)+T^{1-u}c_0^{-1}(1+c_+)^{-1}\nonumber\\
&=X+TX^{-1} \nonumber
\end{align}
by applying the divisor axiom twice.
Hence we have $2\m_2^{\bb_i,\lambda}(\mathbf{e}^1,\mathbf{e}^1) = \partial^2 F(\m_i)$.
This is nonzero since we assumed $\lambda\neq\pm2\sqrt{-1}T^{\frac{1}{2}}$.

Let $H(\Omega^\ast(L_{u_i})^\g\hat\uotimes\Lambda,\m_1^{\bb_i,\lambda})$ 
be the cohomology algebra associated with 
the equivariant Floer $A_\infty$ algebra 
$(\Omega^\ast(L_{u_i})^\g\hat\uotimes\Lambda,\{\m_k^{\bb_i,\lambda}\})$, 
with the product defined as 
\[
[x]\cdot[y] := (-1)^{|x|}[\m_2^{\bb_i,\lambda}(x,y)].
\]
Then we have an isomorphism of $\Lambda$-algebras 
\begin{align}
H(\Omega^\ast(L_{u_i})^\g\hat\uotimes\Lambda,\m_1^{\bb_i,\lambda}) 
\to \cl (-\hat{F}_{\m_i}) \nonumber
\end{align}
by sending $[\mathbf{e}^1]$ to the generator of the Clifford algebra.
By Sheridan's intrinsic formality for the Clifford algebras \cite[Corollary 6.4]{sheridan}, 
we can lift it to a quasi-isomorphism of 
$A_\infty$ algebras
\begin{align}
(\Omega^\ast(L_{u_i})^\g\hat\uotimes\Lambda,\{\m_k^{\bb_i,\lambda}\}) 
\to \cl (-\hat{F}_{\m_i})_\infty, \label{floercl}
\end{align}
where $\cl (-\hat{F}_{\m_i})_\infty$ is defined in Remark \ref{label}.

Now let $\lag$ be the set of pairs $\{(L_{u_1},\bb_1),(L_{u_2},\bb_2)\}$ 
and let $\F_\lag$ be the uncurved $A_\infty$ category over $\Lambda$ 
as defined in Definition \ref{F}.
Since $i_{e_1}(\bb_1)\neq i_{e_1}(\bb_2)$, 
we have $\F_\lag((L_{u_i},\bb_i),(L_{u_j},\bb_j))=0$ if $i\neq j$.
Then by (\ref{floercl}) and by Dyckerhoff's result reviewed in Section 4.3, 
we have an objectwise $A_\infty$ functor
\begin{align}
\F_\lag((L_{u_i},\bb_i),(L_{u_i},\bb_i)) \stackrel{\sim}\longrightarrow \cl(-\hat{F}_{\m_i})_\infty 
\subseteq 
\mf (\Lambda^\stab_{\m_i},\Lambda^\stab_{\m_i})_\infty , \nonumber
\end{align}
where $\mf$ denotes the dg-category of matrix factorizations of $F$.
Therefore we have the following.
\begin{theorem}
[Equivariant homological mirror symmetry for $\C P^1$]\label{maintheorem1}

We have a cohomologically fully-faithful $A_\infty$ functor 
for $\val(\lambda)>0$ and $\lambda\neq\pm2\sqrt{-1}T^\frac{1}{2}$
\begin{align*}
\phi \colon \F_\lag \to \br(F)_\infty
\end{align*}
by sending $(L_{u_i},\bb_i)$ to $\Lambda^\stab_{\m_i}$, 
whose image split-generates $[\br(F)]$.
\end{theorem}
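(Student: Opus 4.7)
The plan is to assemble $\phi$ from three ingredients that the preceding discussion has essentially already prepared: (i) the vanishing of off-diagonal hom spaces on both sides, (ii) the $A_\infty$ quasi-isomorphism (\ref{floercl}) from the equivariant Floer $A_\infty$ algebra to $\cl(-\hat{F}_{\m_i})_\infty$ provided by Sheridan's intrinsic formality of Clifford algebras, and (iii) Dyckerhoff's quasi-isomorphic inclusion $\cl(-\hat{F}_{\m_i})_\infty \hookrightarrow \mf(\Lambda^\stab_{\m_i},\Lambda^\stab_{\m_i})_\infty$ reviewed in Section \ref{mf}.

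First I would dispatch the off-diagonal vanishing. Under the hypotheses $\val(\lambda)>0$ and $\lambda\neq\pm 2\sqrt{-1}T^{1/2}$ the critical equation (\ref{criticalpts}) has two distinct solutions $X_1, X_2$ with $X_1 X_2 = -T$, which translates into $i_{e_1}(\bb_1)\neq i_{e_1}(\bb_2)$ and $\m_0^{\bb_1,\lambda}(1)\neq \m_0^{\bb_2,\lambda}(1)$. Definition \ref{F} then forces $\F_\lag((L_{u_1},\bb_1),(L_{u_2},\bb_2))=0$. On the mirror side, $\br(F)=\prod_{\m\in\crit(F)}\mf(\hat{F}_\m)$ is a product dg-category, so the hom space between $\Lambda^\stab_{\m_1}$ and $\Lambda^\stab_{\m_2}$ vanishes as well.

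Next I would define $\phi$ on objects by $(L_{u_i},\bb_i)\mapsto \Lambda^\stab_{\m_i}$, setting all higher components between distinct objects to zero (forced by the vanishing of both sides) and, on each diagonal block, taking the composition of (\ref{floercl}) with the Clifford inclusion into $\mf(\Lambda^\stab_{\m_i},\Lambda^\stab_{\m_i})_\infty$. Since no composable sequence in $\F_\lag$ mixes the two blocks, the $A_\infty$ functor relations collapse to two independent diagonal checks, each satisfied by construction; cohomological full-faithfulness then follows from the two diagonal quasi-isomorphisms together with the matching zero off-diagonals.

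For split-generation, the hypotheses guarantee that $\crit(F)=\{\m_1,\m_2\}$ with non-degenerate Hessians, so $[\br(F)]=[\mf(\hat{F}_{\m_1})]\times[\mf(\hat{F}_{\m_2})]$, and by the theorem of Dyckerhoff recalled in Section \ref{mf} each $\Lambda^\stab_{\m_i}=k^\stab_{\m_i}$ split-generates its factor, hence the pair $\{\Lambda^\stab_{\m_1},\Lambda^\stab_{\m_2}\}$ split-generates the product. The main obstacle is step (ii): the cohomology-level identification of the Floer algebra with $\cl(-\hat{F}_{\m_i})$ is immediate from the $\m_0$, $\m_1$, $\m_2$ computations already performed, but lifting it to an $A_\infty$ quasi-isomorphism is a nontrivial formality statement, handled here precisely by Sheridan's intrinsic formality for Clifford algebras, which applies because $\partial^2 F(\m_i)$ is non-degenerate exactly under our exclusion of $\lambda=\pm 2\sqrt{-1}T^{1/2}$.
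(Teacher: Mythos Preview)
Your proposal is correct and follows essentially the same route as the paper: off-diagonal vanishing via Definition \ref{F} (the paper invokes only $i_{e_1}(\bb_1)\neq i_{e_1}(\bb_2)$, which already suffices---your additional curvature inequality is not needed and in fact depends on branch choices for $\bb_0$), the diagonal quasi-isomorphisms (\ref{floercl}) from Sheridan's intrinsic formality composed with Dyckerhoff's Clifford inclusion, and split-generation by $\Lambda^\stab_{\m_i}$ from Dyckerhoff's theorem. The only cosmetic point is that $X_1X_2=-T$ is incidental; the relevant fact is simply $X_1\neq X_2$, which is exactly the non-degeneracy hypothesis $\lambda\neq\pm 2\sqrt{-1}T^{1/2}$.
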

\begin{remark}[Degenerate cases]\label{degenerate}
Suppose that $\lambda=\pm2\sqrt{-1}T^\frac{1}{2}$.
Then the potential function $F$ has a unique degenerate critical point $\m$ with valuation $\frac{1}{2}$.
Let $(L_\frac{1}{2}, \rho, \bb=\bb_0+\bb_+)$ be the equivariant Lagrangian brane with a bounding cochain corresponding to $\m$.
By using the divisor axiom, we easily see that
\begin{align}\label{higher}
k!\cdot\m^{\bb, \lambda}_k(\mathbf{e}^1, \mathbf{e}^1, \dots, \mathbf{e}^1)=\partial^kF(\m).
\end{align}
We note that all structure constants of the equivariant Floer algebra are determined by the above equation {\rm (\ref{higher})}.
On the other hand, by \cite[Theorem 5.8]{dyccom}, this unital $A_\infty$ algebra is quasi-isomorphic to the 
$A_\infty$ algebra $\mf(\Lambda^\stab_\m, \Lambda^\stab_\m)_\infty$.
\end{remark}
This means that $\F_\lag$ and $\br(F)_\infty$ are Morita equivalent.

\subsection{The case of $\C$}

Let $\C$ be equipped with the $S^1=U(1)$-action 
$z \mapsto \zeta z$ ($\zeta \in U(1)$), 
the standard symplectic form $\omega = dx \wedge dy$ 
($z = x+iy$) and the standard complex structure. 
Let $\mu \colon \C \to \R_{\geq 0}; \, z \mapsto \frac{1}{2}|z|^2$ be 
the moment map associated to the $S^1$-action, 
and let $L_u := \mu^{-1}(u)$ be the moment fiber Lagrangian over $u \in \R_{>0}$.
By identifying $L_u$ with $S^1$ via the $S^1$-action, 
we choose an orientation of $L_u$ 
such that it is compatible with that of $S^1$.
We choose the standard spin structure of $L_u$ (\cite[Section 8]{MR2057871}).

Let $\g := Lie(S^1)\otimes\C$.
Take a basis $e_1$ of $\g$ consistent with the orientation of $S^1$ and 
an integral basis $\mathbf{e}^1 \in \Omega^1(L_u)^\g \cong H^1(L_u)$, 
such that $\mathbf{e}^1$ coincides with the dual basis $e^1 \in \g^\vee$ 
via the identification $S^1 \to L_u$ given by the $S^1$-action.

Although $\C$ is noncompact, the arguments in Section \ref{equivfloer} 
applies to this case as well, 
because all the holomorphic and stable disks bounded by $L_u$ are 
contained in a compact set thanks to the maximum principle.
There is a unique holomorphic disk up to automorphisms 
of Maslov index $2$ bounded by $L_u$, 
which projects onto $[0,u] \subset \R_{\geq 0}$ and whose relative homology class 
we denote by $\beta \in H_2(\C P^1,L_u;\Z)$.
Then $\langle \mathbf{e}^1,\partial \beta \rangle = 1$ and $\omega(\beta) = 2\pi u$.

Take a closed $S^1$-invariant $1$-form 
$\bb = \bb_0+\bb_+$ where $\bb_0 \in \Omega^1(L_u)$ and 
$\bb_+ \in \Omega^1(L_u) \hat\uotimes \LLp$, 
and put 
\begin{align}
c_0 := e^{\langle \bb_0,\partial\beta \rangle} \in \C, \qquad
1+c_+ := e^{\langle \bb_+,\partial\beta \rangle} \in \LLp. \label{c}
\end{align}
Take $\lambda \in \Lambda_0$ and take an equivariant parameter 
$\lambda \colon S\g^\vee \cong \C[e^1] \to \Lambda_0$ 
which sends $e^1$ to $\lambda$.
The the curvature term and the differential 
 of the equivariant $A_\infty$ algebra are
\begin{align*}
\m_0^{\bb,\lambda}(1) &= T^{\omega(\beta)/2\pi} e^{\langle \bb,\partial\beta \rangle}
- \lambda i_{e_1}(\bb)\nonumber\\
&= T^uc_0(1+c_+) - \lambda \langle \bb , e_1 \rangle ,\\
\m_1^{\bb,\lambda}(\mathbf{e}^1) &=T^{\omega(\beta)/2\pi} e^{\langle \bb_0,\partial \beta \rangle}
\sum_{l_0+l_1=l} 
\m_{l+1,\beta}(\overbrace{\bb_+,...,\bb_+}^{l_0},e^1,\overbrace{\bb_+,...,\bb_+}^{l_1})
- \lambda i_{e_1}(\mathbf{e}^1)\nonumber\\
&= T^uc_0(1+c_+) - \lambda .
\end{align*}
Givental's potential function for $\C$
\begin{align}
F = T^ux-\lambda\log x = X - \lambda \log X + {\rm const.} \nonumber
\end{align}
is recovered by introducing the variables $x:=c_0(1+c_+)$ and $X:=T^ux$, 
and the differential $\m_1^{\bb,\lambda}$ vanishes 
when the derivative $\partial F = X - \lambda$ of $F$ equals zero.

Suppose $\lambda \neq 0$.
Then $F$ has a unique nondegenerate critical point $\m \in \crit F$.
If $\val(\lambda)>0$, $\m$ 
corresponds to $(L_u,\bb)$ with nonvanishing equivariant Floer cohomology, 
where $u=\val (\lambda)$, $L_u$ is the moment fiber over $u$, 
and $\bb=\bb_0+\bb_+$ is as in the case of $\C P^1$.

Then 
\begin{align}
2\m_2^{\bb,\lambda}(\mathbf{e}^1,\mathbf{e}^1) &= T^uc_0(1+c_+) 
= X = \partial^2 F \neq 0. \nonumber
\end{align}

Let $\F_\lag$ be the uncurved $A_\infty$ category 
with a single object $\lag = \{ (L_u,\bb=\bb_0+\bb_+) \}$.
Then we have the following as in the case of $\C P^1$.
\begin{theorem}[Homological mirror symmetry for $\C$]\label{maintheorem2}
Suppose $\lambda \neq 0$ and $\val(\lambda)>0$.
Then we have a cohomologically fully faithful $A_\infty$ functor
\begin{align*}
\phi \colon \F_\lag \to \br(F)_\infty = \mf(F_\m)_\infty
\end{align*}
by sending $(L_u,\bb)$ to $\Lambda_\m^\stab$, 
whose image split-generates $[\br(F)]$.
\end{theorem}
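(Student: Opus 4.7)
The plan is to follow the same strategy used for $\C P^1$ in Theorem \ref{maintheorem1}, adapted to the simpler situation where $F$ has exactly one nondegenerate critical point. Most of the Floer-theoretic preparation has already been carried out just before the theorem statement, so the proof will amount to assembling these pieces through intrinsic formality and Dyckerhoff's description of the matrix factorization category.

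First, I would observe that under the hypothesis $\lambda \ne 0$ with $\val(\lambda) > 0$, the equation $\partial F = X - \lambda = 0$ has the unique solution $X = \lambda$ with valuation $u = \val(\lambda) \in (0, \infty)$, and this critical point $\m$ is nondegenerate because $\partial^2 F = X \neq 0$ at $\m$. The parameters $(c_0, c_+)$ determined by $\lambda = T^u c_0(1+c_+)$ then specify the unique equivariant Lagrangian brane $\lag = (L_u, \rho, \bb_+)$ whose equivariant Floer $A_\infty$ algebra could be nonzero, matching the unique object of $\F_\lag$. The computation of $\m_1^{\bb,\lambda}(\mathbf{e}^1)$ already performed shows that the differential vanishes, so the cohomology of $(\Omega^\ast(L_u)^\g \hat\uotimes \Lambda, \m_1^{\bb,\lambda})$ is just $H^\ast(L_u) \underset{\C}\otimes \Lambda$ as a $\Z/2\Z$-graded module.

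Next I would pin down the product. Using the divisor axiom twice, exactly as in the $\C P^1$ case (but with only the class $\beta$ contributing), I get $2\m_2^{\bb,\lambda}(\mathbf{e}^1, \mathbf{e}^1) = T^u c_0 (1+c_+) = \partial^2 F(\m)$, which is nonzero. Hence the cohomology ring is isomorphic as a $\Lambda$-algebra to the Clifford algebra $\cliff(-\hat F_\m)$ associated to the Hessian of $-F$ at $\m$, via $[\mathbf{e}^1] \mapsto$ the Clifford generator. Invoking Sheridan's intrinsic formality result \cite[Corollary 6.4]{sheridan} for Clifford algebras of nondegenerate quadratic forms, this algebra-level isomorphism lifts to an $A_\infty$ quasi-isomorphism
\begin{align*}
(\Omega^\ast(L_u)^\g \hat\uotimes \Lambda, \{\m_k^{\bb,\lambda}\}) \xrightarrow{\sim} \cliff(-\hat F_\m)_\infty.
\end{align*}

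Finally, since $\m$ is a nondegenerate critical point, Dyckerhoff's results \cite{dyccom} reviewed in Section \ref{mf} provide a quasi-isomorphism $\cliff(-\hat F_\m) \hookrightarrow \mf(\Lambda_\m^\stab, \Lambda_\m^\stab)$ and tell us that $\Lambda_\m^\stab$ split-generates $[\mf(\hat F_\m)] = [\br(F)]$. Composing these gives the desired cohomologically fully faithful $A_\infty$ functor $\phi: \F_\lag \to \br(F)_\infty$ sending $(L_u, \bb)$ to $\Lambda_\m^\stab$, and split-generation of the image follows immediately from the fact that $\crit(F) = \{\m\}$.

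I do not expect any serious obstacle here: the only conceptual subtlety is the noncompactness of $\C$, but as remarked in the excerpt this is handled by the maximum principle, which confines all holomorphic and stable disks bounded by $L_u$ to a compact subset, so Assumption \ref{regularity} and the rest of the Floer package apply verbatim. The main work is therefore purely a bookkeeping adaptation of the $\C P^1$ argument to the one-critical-point setting.
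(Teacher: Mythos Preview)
Your proposal is correct and follows essentially the same approach as the paper: the paper simply states the theorem ``as in the case of $\C P^1$'' after recording the computations of $\m_1^{\bb,\lambda}(\mathbf{e}^1)$ and $2\m_2^{\bb,\lambda}(\mathbf{e}^1,\mathbf{e}^1)=\partial^2 F$, so the intended argument is precisely the one you outline---identify the cohomology algebra with $\cliff(-\hat F_\m)$, invoke Sheridan's intrinsic formality to lift to an $A_\infty$ quasi-isomorphism, and then use Dyckerhoff's results to embed into $\mf(\Lambda_\m^\stab,\Lambda_\m^\stab)_\infty$ and conclude split-generation.
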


\section{Appendix A}
In this Appendix, we will compute the dimension of the Jacobian ring associated with
 a Landau-Ginzburg mirror of a smooth semi-projective toric variety (see Theorem \ref{maintheorem}).
\subsection{Preliminaries on polyhedrons} \label{polyhedron}
In \S \ref{polyhedron}, we introduce some notions of polyhedron, which is used throughout this appendix.
We mainly follow \cite{coxtor}.

Let $N \cong \Z^n$ be a free abelian group of rank $n$ and $M:=\Hom_{\Z}(N, \Z)$ be the dual lattice.
For a commutative ring $R$, set $N_R:=N \otimes_\Z R, M_R:=M \otimes_\Z R$.
For a convex polyhedral cone $\sigma \in N_\R,$ the dual cone $\sigma^\vee$ is defined by 
\[\sigma^\vee=\{u \in M_\R \mid \lan{u,v} \ge 0 \ \text{for all}\  v \in \sigma\}.\] 
Note that a convex polyhedral cone is full-dimensional if and only if its dual cone is strictly convex.
For convex polyhedral cones $\tau, \sigma \subseteq N_\R,$ we write $\tau \prec \sigma$ if $\tau$ is a face of $\sigma$.

Let $\Sigma$ be a fan in $N_\mathbb{R}$\  and 
$\varphi$ be a strictly convex support function on $\Sigma$ (see \cite[Definitions 3.1.2, 4.2.11, 6.1.12]{coxtor}).
In this appendix, we assume that 
\begin{align}
\text{the support} \ |\Sigma| \ \text{is a full-dimensional rational polyhedral cone.}
\end{align}
Let $v_1, v_2, \dots, v_m \in N$ be the set of integral generators of rays of $\Sigma.$

Let $P$ be a polyhedron in $M_\R$ determined by $\Sigma$ and $\varphi$, i.e., 
\[P=\{u \in M_\R\mid \ell_i(u):=\lan{u, v_i}+\varphi(v_i) \ge 0, \ i=1,2, \dots, m \}.\]
We call integral affine functions $\ell_i \ (i=1,2,\dots,m)$ the defining functions of $P$. 
For $v \in |\Sigma|$, we note that 
\begin{align*}\min_{u \in P} \lan{u, v}=-\varphi(v)\end{align*} .

Let $\uP$ be the recession cone of $P$, i.e., 
\[\uP:=\{u \in M_\R \mid \lan{u, v_i} \ge 0,\  \ i=1,2, \dots, m\}.\]
We note that 
\[\uP^\vee=|\Sigma|=\R_{\ge 0}v_1+\R_{\ge 0}v_2+\cdots +\R_{\ge 0}v_m.\] 
Since $|\Sigma|$ is a full-dimensional rational polyhedral cone, the polyhedron $P$ is pointed, i.e., $\uP$ is strictly convex. 

\subsection{Preliminaries on polyhedral subdomains}\label{polyhedral subdomain}
We define a $\Lambda$-algebra $\Lambda\lan{U_P}$ by
\[\Lambda\lan{U_P}:=\left\{ \sum_{\ v \in N \cap |\Sigma|} c_v y^v
   \middle|c_v \in \Lambda, \lim_{|v| \to \infty}  (\lan{u, v}+\val(c_v))=\infty \ \text {for all} \ u \in P \right\},\]
where $|v|$ is a standard Euclidean norm on $N_\R$.
For $f=\sum c_v y^v \in \Lambda\lan{U_P},$ set 
	\[|f|:=\exp\big(-\!\!\!\inf_{\substack{u \in P, \\ v \in N \cap \uP^\vee}} (\lan{u, v}+\val(c_v))\big).\]
We note that $\inf_{u \in P}(\lan{u, v}+\val(c_v))=\val(c_v)-\varphi(v)$ for $v \in |\Sigma| \cap N$.	
Then $|\cdot|$ gives a complete non-archimedean norm on $\Lambda\lan{U_P}$
and the pair $(\Lambda\lan{U_P}, |\cdot|)$ is a $\Lambda$-Banach algebra (\cite[Remark 6.6]{rabtro}).
Moreover, we know that $(\Lambda\lan{U_P}, |\cdot|)$ is a $\Lambda$-affinoid algebra (\cite[Proposition 6.9]{rabtro}).

By construction, the affinoid algebra $\Lambda\lan{U_P}$ contains the monoid ring $\Lambda[\uP^\vee \cap N],$
which induces an inclusion  \[\Spm(\Lambda\lan{U_P}) \subseteq \Spm(\Lambda[\uP^\vee \cap N]),\]
where we denote by $\Spm$ the maximal spectrums (see \cite[Proposition 6.9]{rabtro}).

Let $\m \in \Spm(\Lambda\lan{U_P})$ and 
$\tilde{\m}:=\m \cap \Lambda[\uP^\vee \cap N]$ be the corresponding maximal ideal of $\Lambda[\uP^\vee \cap N]$.
Then we have an isomorphism between complete local rings
\[\widehat{\Lambda\lan{U_P}_\m} \cong \widehat{\Lambda[\uP^\vee \cap N]_{\tilde{\m}}}.\]
(See the proof of \cite[Proposition 6.9]{rabtro}.  See also \cite[Proposition 7.3.2.3]{bosnon} and \cite[Lemma 5.1.2]{conirr}.)
We note that $\Lambda[\uP^\vee \cap N]_{\tilde{\m}}$ is a Cohen-Macaulay ring of Krull dimension $n$ (see, e.g., \cite[Theorem 1]{hocrin}, \cite[\S 13.1]{eiscom}).
Hence $\Lambda\lan{U_P}_\m$ is a Cohen-Macaulay ring (\cite[Proposition 6.9]{rabtro}) of Krull dimension $n$.
Moreover, $\Lambda\lan{U_P}_\m$ is regular if and only if $\Lambda[\uP^\vee \cap N]_{\tilde{\m}}$ is regular.

For a $\Lambda$-Banach algebra $A$, set
\begin{align*}
A_0:=\{f \in A\mid |f| \le 1 \}, \ \ A_+:=\{f \in A\mid |f| < 1 \}, \ \ \overline{A}:=A_0/A_+.
\end{align*}
Then $A_0$ is a $\Lambda$-Banach algebra, $A_+$ is an ideal of $A_0$, and $\overline{A}$ is a $\C$-algebra.
For $v \in |\Sigma| \cap N$, let $\overline{y}^v \in \overline{\Lambda\lan{U_P}}$ be the residue class of 
$T^{\varphi(v)}y^v \in  \Lambda\lan{U_P}_0$.

For $c=\sum_{i=0}^\infty c_i T^{\lambda_i} \in \Lambda \setminus \{0\}$, the leading term coefficient $L(c) \in \C^*$ is defined by the coefficient of $T^{\val(c)}$ and 
set $\mathrm{Sp}(c):=\{\lambda_i\mid c_i \neq 0\} \subset \R.$
For $f=\sum_{v \in |\Sigma| \cap N} c_v y^v \in \Lambda\lan{U_P}$,
set $\mathrm{Sp}(f):=\bigcup_{v \in |\Sigma| \cap N}\mathrm{Sp}(T^{-\varphi(v)}c_v).$
Note that $\mathrm{Sp}(c)$ and $\mathrm{Sp}(f)$ are discrete subsets of $\R$. 
\begin{proposition}
We have \[\overline{\Lambda\lan{U_P}} \cong \bigoplus_{v \in |\Sigma| \cap N} \C \overline{y}^v\]
as $\C$-vector spaces.
The ring structure of $\overline{\Lambda\lan{U_P}}$ is given by 
\[\overline{y}^v \cdot \overline{y}^{v'}=
\begin{cases} 
\overline{y}^{v+v'} & \text{if}\ v, v' \in \sigma \ \text{for some}\ \sigma \in \Sigma\\
0 &\text{otherwise}. 
\end{cases}
\]
\end{proposition}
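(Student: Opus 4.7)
The plan is to define a $\C$-linear map
\[
\Phi\colon \bigoplus_{v \in |\Sigma| \cap N} \C \overline{y}^v \to \overline{\Lambda\lan{U_P}}
\]
sending each basis element $\overline{y}^v$ to the class of $T^{\varphi(v)} y^v$, and then verify that $\Phi$ is a bijection respecting the multiplication described in the proposition. Well-definedness is immediate from the identity $\inf_{u \in P}(\lan{u, v}+\varphi(v))=0$ already noted in the paper, which shows $|T^{\varphi(v)}y^v|=1$ and hence $T^{\varphi(v)}y^v \in \Lambda\lan{U_P}_0$. Injectivity is formal: if a finite combination $\sum_v a_v T^{\varphi(v)} y^v$ lies in $\Lambda\lan{U_P}_+$, the norm inequality forces $\val(a_v T^{\varphi(v)}) > \varphi(v)$ for every $v$, so each $a_v \in \C$ must vanish.

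The substantive step is surjectivity. Given $f = \sum c_v y^v \in \Lambda\lan{U_P}_0$, I would first prove the growth statement
\[
\val(c_v) - \varphi(v) \to \infty \text{ as } |v| \to \infty.
\]
This is where the geometry of $P$ enters: since $|\Sigma|$ is full-dimensional and strictly convex, $P$ is pointed with only finitely many vertices, and for any $v \in |\Sigma|$ the minimum $-\varphi(v) = \min_{u \in P}\lan{u,v}$ is attained at one of those vertices. If the growth statement failed along some sequence $v_n \to \infty$, a pigeonhole argument on the finite vertex set would produce a single vertex $u_0 \in P$ attaining $-\varphi(v_n) = \lan{u_0, v_n}$ for infinitely many $n$, contradicting the defining convergence condition $\lan{u_0, v_n}+\val(c_{v_n}) \to \infty$ at $u_0 \in P$.

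With the growth statement in hand, the set $V := \{v : \val(c_v) = \varphi(v)\}$ is finite. Setting $a_v := L(c_v T^{-\varphi(v)}) \in \C$ for $v \in V$ and forming
\[
g := f - \sum_{v \in V} a_v T^{\varphi(v)} y^v,
\]
every coefficient $c'_v$ of $g$ satisfies $\val(c'_v) > \varphi(v)$; moreover the growth statement implies that only finitely many of the strictly positive numbers $\val(c'_v) - \varphi(v)$ can fall below any chosen threshold, so their infimum is itself strictly positive. Hence $|g| < 1$, i.e., $g \in \Lambda\lan{U_P}_+$, which yields the decomposition $f \equiv \sum_{v \in V} a_v T^{\varphi(v)} y^v \pmod{\Lambda\lan{U_P}_+}$ and completes the proof of surjectivity.

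The multiplicative rule is then a one-line computation: $T^{\varphi(v)} y^v \cdot T^{\varphi(v')} y^{v'} = T^{\varphi(v)+\varphi(v')} y^{v+v'}$. When $v, v'$ lie in a common cone $\sigma \in \Sigma$, linearity of $\varphi$ on $\sigma$ gives $\varphi(v+v') = \varphi(v) + \varphi(v')$, so the product represents $\overline{y}^{v+v'}$. When no such $\sigma$ exists, strict convexity of $\varphi$ forces $\varphi(v+v') < \varphi(v) + \varphi(v')$, so the product has valuation strictly greater than $\varphi(v+v')$, hence lies in $\Lambda\lan{U_P}_+$ and maps to zero. The main obstacle in the whole argument is the growth statement $\val(c_v) - \varphi(v) \to \infty$; every other step is either formal or a short direct calculation.
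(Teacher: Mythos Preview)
Your proposal is correct and follows the same route the paper takes, simply spelling out the details the paper omits (the paper's proof is two sentences: the vector-space isomorphism ``easily follows from the definition'' and the ring structure follows from strict convexity of $\varphi$). Your growth lemma $\val(c_v)-\varphi(v)\to\infty$ via the pigeonhole on the finitely many vertices of the pointed polyhedron $P$ is exactly what makes ``easily follows'' precise, and your treatment of the product via the inequality $\varphi(v+v')<\varphi(v)+\varphi(v')$ off a common cone is the intended use of strict convexity.
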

\begin{proof}
The isomorphism $\overline{\Lambda\lan{U_P}} \cong \bigoplus_{v \in |\Sigma| \cap N} \C \overline{y}^v$ easily follows from the definition.
Since $\varphi$ is strictly convex, we have the desired conclusion on the ring structure.
\end{proof}

Let $\Sym(M_\Lambda)$ be the symmetric algebra of the $\Lambda$-vector space $M_\Lambda$
and $\Lambda\lan{M}$ be the completion of the polynomial ring $\Sym(M_\Lambda)$ with respect to the Gauss norm so that $\Lambda\lan{M}$ is a Tate algebra.
Then we have $\overline{\Lambda\lan{M}} \cong \Sym(M_\C)$.
We see that $\Spm(\Lambda\lan{M})$ is naturally isomorphic to $N_{\Lambda_0}$.

For $f=\sum c_v y^v \in \Lambda\lan{U_P}$ and $\mr \in M$
we define a derivative $\partial_\mr f \in \Lambda \lan{U_P}$ by $\partial_\mr f:=\sum c_v \lan{\mr, v} y^v.$
We note that $\Lambda\lan{U_P}_0$, $\Lambda\lan{U_P}_+$ are closed under $\partial _\mr$ 
and $\partial_\mr$ also acts on $\overline{\Lambda\lan{U_P}}.$
We define a $\Lambda$-algebra homomorphism $\psi_f : \Lambda\lan{M} \to \Lambda\lan{U_P}$ by 
$\psi_f(\mr):=\partial_\mr f$.
If $f \in \Lambda\lan{U_P}_0$, 
then we have $\psi_f(\Lambda\lan{M}_0) \subseteq \Lambda\lan{U_P}_0$
and $\psi_f(\Lambda\lan{M}_+) \subseteq \Lambda\lan{U_P}_+$.
Hence $\psi_f$ induces a $\C$-algebra morphism $\overline{\psi}_f : \Sym(M_\C) \to \overline{\Lambda\lan{U_P}}$.
\begin{proposition}\label{finite}
Let $f \in \Lambda\lan{U_P}_0$.
Suppose that $\overline{\Lambda\lan{U_P}}$ is finite over $\Sym(M_\C)$.
Then $\Lambda\lan{{U_P}}_0$ is finite over $\Lambda\lan{M}_0$.
\end{proposition}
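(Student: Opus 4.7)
The plan is a Nakayama-type lifting argument, passing from finite generation of $\overline{\Lambda\lan{U_P}}$ over $\Sym(M_\C)$ (via $\overline{\psi}_f$) to that of $\Lambda\lan{U_P}_0$ over $\Lambda\lan{M}_0$ (via $\psi_f$). Using the hypothesis, fix finitely many elements $\overline{g}_1,\dots,\overline{g}_s \in \overline{\Lambda\lan{U_P}}$ generating it as a $\Sym(M_\C)$-module, and lift each to $g_i \in \Lambda\lan{U_P}_0$. Exploiting the decomposition $\overline{\Lambda\lan{U_P}} = \bigoplus_v \C\, \overline{y}^v$, one may in fact take $g_i := T^{\varphi(v_i)} y^{v_i}$ for some $v_i \in |\Sigma|\cap N$.

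We then claim that the $g_i$'s generate $\Lambda\lan{U_P}_0$ as a $\Lambda\lan{M}_0$-module via $\psi_f$. Given $h \in \Lambda\lan{U_P}_0$, we construct the desired $a_i \in \Lambda\lan{M}_0$ with $h = \sum_i \psi_f(a_i) g_i$ by successive approximation. Set $h^{(0)} := h$; at step $k$, reduce $\overline{h^{(k)}} \in \overline{\Lambda\lan{U_P}}$ against the generators to obtain $\overline{a}_i^{(k)} \in \Sym(M_\C)$ satisfying $\overline{h^{(k)}} = \sum_i \overline{a}_i^{(k)}\, \overline{g}_i$, lift each to $a_i^{(k)} \in \Lambda\lan{M}_0$, and define $h^{(k+1)} := h^{(k)} - \sum_i \psi_f(a_i^{(k)}) g_i$, which lies in $\Lambda\lan{U_P}_+$ by construction. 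Provided the remainders satisfy $|h^{(k)}| \to 0$, the formal series $a_i := \sum_k a_i^{(k)}$ converges in $\Lambda\lan{M}_0$ (complete for its Gauss norm), yielding $h = \sum_i \psi_f(a_i) g_i$.

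The main obstacle is ensuring $|h^{(k)}| \to 0$. Since $\Lambda\lan{U_P}_+ = \bigcup_{\epsilon > 0} T^\epsilon \Lambda\lan{U_P}_0$ is \emph{not} a principal ideal --- the Novikov ring admits no single uniformizer --- a crude iteration only yields the strict inequality $|h^{(k+1)}| < |h^{(k)}|$, with no a priori uniform contraction. The key technical lemma to prove is a \emph{uniform one-step reduction}: there exists a fixed $\delta > 0$ such that, for every $h \in \Lambda\lan{U_P}_0$, there is a lift $a_i^{(0)} \in \Lambda\lan{M}_0$ with $|h - \sum_i \psi_f(a_i^{(0)}) g_i| \le e^{-\delta} |h|$. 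Once this is established, iteration gives $|h^{(k)}| \le e^{-k\delta}|h| \to 0$, and the argument concludes.

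To prove the uniform reduction one combines three ingredients: the graded decomposition of $\overline{\Lambda\lan{U_P}}$, the Noetherianity of $\Sym(M_\C)$ (which yields a finite presentation $\Sym(M_\C)^r \to \Sym(M_\C)^s \to \overline{\Lambda\lan{U_P}} \to 0$ of the module), and the completeness of $\Lambda\lan{U_P}_0$ with respect to the descending filtration $\{T^k \Lambda\lan{U_P}_0\}_{k \in \Z_{\ge 0}}$. The finite presentation lets one realize the lifting map $\Lambda\lan{M}_0^s \to \Lambda\lan{U_P}_0$ as a strict morphism of $\Lambda$-Banach modules, so that the non-archimedean open mapping theorem provides the required uniform contraction factor $\delta$. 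With the uniform reduction in hand, the Nakayama iteration converges, showing that $g_1,\dots,g_s$ generate $\Lambda\lan{U_P}_0$ as a $\Lambda\lan{M}_0$-module via $\psi_f$, i.e.\ that $\Lambda\lan{U_P}_0$ is finite over $\Lambda\lan{M}_0$.
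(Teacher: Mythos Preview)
Your overall strategy---choose monomial lifts $g_i = T^{\varphi(v_i)} y^{v_i}$ of generators and run a Nakayama-type successive approximation---coincides with the paper's, and you correctly isolate the crux: the naive iteration yields only $|h^{(k+1)}| < |h^{(k)}|$, and since the value group of $\Lambda$ is all of $\R$ this alone does not force $|h^{(k)}| \to 0$.

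The gap is in how you propose to obtain the uniform one-step contraction $\delta$. You invoke the non-archimedean open mapping theorem for the map $\Phi \colon \Lambda\lan{M}_0^s \to \Lambda\lan{U_P}_0$, $(a_i) \mapsto \sum_i \psi_f(a_i)\, g_i$. But that theorem takes \emph{surjectivity} of a continuous linear map between Banach spaces as a hypothesis and returns openness; here surjectivity of $\Phi$ is exactly the statement you are trying to prove, so the appeal is circular. The finite presentation $\Sym(M_\C)^r \to \Sym(M_\C)^s \to \overline{\Lambda\lan{U_P}} \to 0$ lives purely at the residue level and does not lift to any strictness assertion for $\Phi$ without further input. (There is also a framework issue: $\Lambda\lan{M}_0$ and $\Lambda\lan{U_P}_0$ are $\Lambda_0$-modules, not Banach spaces over a non-archimedean field, so one would have to say precisely which open-mapping statement is meant.) Note too that your uniform-$\delta$ lemma, once granted, immediately yields surjectivity of $\Phi$ by iteration; so it is essentially equivalent to the conclusion, and one cannot expect to establish it by a soft argument that does not already encode the full result.

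The paper's mechanism is different and does not pass through any uniform $\delta$. For a \emph{fixed} $g \in \Lambda\lan{U_P}_0$ it introduces the discrete submonoid $G \subset \R_{\ge 0}$ generated by $\mathrm{Sp}(g)$ together with the sets $\mathrm{Sp}(T^{\varphi(v_i)} f^k y^{v_i})$ for $k \ge 1$ and $i = 1,\dots,r$, and takes every lift $\phi_{i,k}$ in $\Sym(M_\C)$ (purely complex coefficients). The correction $\sum_i T^{c_k+\varphi(v_i)} \phi_{i,k}\, y^{v_i}$ then has $\mathrm{Sp}$ contained in $c_k + G \subset G$, whence $\mathrm{Sp}(g_{k+1}) \subset G \setminus \{c_0,\dots,c_k\}$. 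Since $G$ is discrete and enumerated as $c_0 < c_1 < \cdots \to \infty$, this forces $|g_k| \to 0$. Replacing your open-mapping appeal with this spectrum bookkeeping closes the gap.
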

\begin{proof}
Take $v_1, v_2, \cdots, v_r \in |\Sigma| \cap N$ such that $\overline{y}^{v_1}, \overline{y}^{v_2}, \cdots, \overline{y}^{v_r}$ 
generate $\overline{\Lambda\lan{U_P}}$ as a $\Sym(M_\C)$-module.
Take $g=\sum T^{\varphi(v)}c'_v y^v \in \Lambda\lan{U_P}_0$. 
Let $G$ be the discrete submonoid of 
$\R_{\ge 0}$ generated by \[\mathrm{Sp}(g) \cup \bigcup_{\substack{k \ge 1, \\ i=1, \dots, r}} \mathrm{Sp}(T^{\varphi(v_i)}f^ky^{v_i}).\]
We enumerate $G$ by 
\[G=\{c_0, c_1, c_2, \cdots \}, \ 0=c_0<c_1<c_2<\cdots.\]
Let $\overline{g}=\sum_{\val(c'_v)=0} L(c'_v)\overline{y}^v \in \overline{\Lambda\lan{U_P}}$ be the residue class of $g$.
Choose $\phi_{1, 0}, \cdots, \phi_{r, 0} \in \Sym(M_\C)$ such that 
$\overline{g}=\sum_{i=1}^r\phi_{i, 0}\overline{y}^{v_i}.$
Set $g_0=g$ and $g_1=g_0-\sum_{i=1}^rT^{c_0+\varphi(v_i)}\phi_{i, 0} y^{v_i}$, 
where $\phi_{i,0}$ are considered as elements of $\Lambda\lan{M}_0.$
Then we have $\mathrm{Sp}(g_1) \subset G \setminus\{c_0\}.$
Inductively, we can construct 
\[g_{k+1} \in \Lambda\lan{U_P}_0, \ \ \phi_{1, k}, \phi_{2, k}, \cdots, \phi_{r, k} \in \Sym(M_\C)\] 
with 
\[g_{k+1}=g_k-\sum_{i=1}^rT^{c_k+\varphi(v_i)}\phi_{i, k} y^{v_i}, \ \ \mathrm{Sp}(g_{k+1}) \subset G\setminus\{c_0, c_1, \cdots, c_k\}.\]
Set $\phi_i:=\sum_{j=0}^\infty T^{c_j}\phi_{i, j} \in \Lambda\lan{M}_0,$ then we have $g=\sum_{i=1}^r\phi_i T^{\varphi(v_i)}y^{v_i}$.
This implies that $T^{\varphi(v_i)} y^{v_i} (i=1, 2, \cdots, r)$ generate $\Lambda\lan{U_P}_0$ as a $\Lambda\lan{M}_0$-module.
\end{proof}
\begin{corollary}\label{flat}
Under the same assumptions of Proposition \ref{finite}, 
$\Lambda\lan{U_P}$ is flat over $\Lambda\lan{M}$.
\end{corollary}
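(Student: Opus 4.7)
The plan is to deduce flatness of $\psi_f\colon\Lambda\lan{M}\to\Lambda\lan{U_P}$ by reducing to the miracle flatness theorem at each maximal ideal of $\Lambda\lan{U_P}$.

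First I would upgrade the conclusion of Proposition \ref{finite} from the $\Lambda_0$-level to the $\Lambda$-level: tensoring with $\Lambda$ over $\Lambda_0$ turns $\Lambda\lan{U_P}_0$ into $\Lambda\lan{U_P}$ and $\Lambda\lan{M}_0$ into $\Lambda\lan{M}$, so the elements $T^{\varphi(v_i)}y^{v_i}$ produced in the proof of Proposition \ref{finite} still generate $\Lambda\lan{U_P}$ as a $\Lambda\lan{M}$-module via $\psi_f$. In particular $\psi_f$ is a finite ring homomorphism, hence integral.

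Since flatness is a local property, it suffices to show that for every $\mathfrak{n}\in\Spm(\Lambda\lan{U_P})$ the induced local homomorphism $\Lambda\lan{M}_\mathfrak{m}\to\Lambda\lan{U_P}_\mathfrak{n}$ is flat, where $\mathfrak{m}:=\psi_f^{-1}(\mathfrak{n})$. Integrality of $\psi_f$ forces $\mathfrak{m}$ to be maximal in $\Lambda\lan{M}$. I would then invoke the miracle flatness criterion (Matsumura, Commutative Ring Theory, Theorem 23.1), which requires three inputs:
\begin{itemize}
\item $\Lambda\lan{M}_\mathfrak{m}$ is a regular local ring of Krull dimension $n$, since $\Lambda\lan{M}$ is the Tate algebra in $n$ variables over $\Lambda$.
\item $\Lambda\lan{U_P}_\mathfrak{n}$ is Cohen--Macaulay of Krull dimension $n$, as already recalled in \S\ref{polyhedral subdomain} via the completion isomorphism $\widehat{\Lambda\lan{U_P}_\mathfrak{n}}\cong\widehat{\Lambda[\uP^\vee\cap N]_{\tilde\mathfrak{n}}}$ together with Hochster's theorem for normal affine semigroup rings.
\item The dimension formula $\dim\Lambda\lan{U_P}_\mathfrak{n}=\dim\Lambda\lan{M}_\mathfrak{m}+\dim\bigl(\Lambda\lan{U_P}_\mathfrak{n}/\mathfrak{m}\Lambda\lan{U_P}_\mathfrak{n}\bigr)$ holds: the fiber is a finite-dimensional algebra over the residue field $\Lambda\lan{M}_\mathfrak{m}/\mathfrak{m}$ by finiteness of $\psi_f$, hence of Krull dimension $0$, while both of the other terms equal $n$.
\end{itemize}
Miracle flatness then gives that $\Lambda\lan{U_P}_\mathfrak{n}$ is flat over $\Lambda\lan{M}_\mathfrak{m}$ for every $\mathfrak{n}$, and hence $\Lambda\lan{U_P}$ is flat over $\Lambda\lan{M}$.

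The main obstacle is the verification that the Cohen--Macaulay property and the dimension count transfer cleanly from the monoid algebra $\Lambda[\uP^\vee\cap N]$ to the affinoid localization $\Lambda\lan{U_P}_\mathfrak{n}$. This is where the completion isomorphism recalled from \cite{rabtro} together with the fact that completion preserves Krull dimension and the Cohen--Macaulay property does the heavy lifting; once this is in hand, the rest is standard bookkeeping in non-archimedean commutative algebra.
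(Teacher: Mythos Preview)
Your proposal is correct and follows essentially the same route as the paper: both localize at a maximal ideal of $\Lambda\lan{U_P}$, use finiteness from Proposition~\ref{finite} to see the fiber has Krull dimension $0$, and then apply miracle flatness (the paper cites \cite[Theorem 18.16]{eiscom}, you cite Matsumura 23.1) using that $\Lambda\lan{M}_{\mathfrak m}$ is regular of dimension $n$ and $\Lambda\lan{U_P}_{\mathfrak n}$ is Cohen--Macaulay of dimension $n$. The only cosmetic difference is that the paper phrases the fiber-dimension step via incomparability, while you argue directly from finiteness over the residue field.
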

\begin{proof}
Let $\m$ be a maximal ideal of $\Lambda\lan{U_P}$ and set $\m':=\psi_f^{-1}(\m),$ then $\m'$ is a maximal ideal of $\Lambda\lan{M}.$
By Proposition \ref{finite}, we easily see that $\Lambda\lan{U_P}$ is finite over $\Lambda\lan{M}$.
Combining with the incomparability (e.g., \cite[Corollary 4.18, Proposition 9.2]{eiscom}), the fiber $\Lambda\lan{U_P}/ \m'\Lambda\lan{U_P}$ over $\m'$ has Krull dimension $0$.
Since $\Lambda\lan{U_P}_\m$ is a Cohen-Macaulay local ring of Krull dimension $n$ and $\Lambda\lan{M}_{\m'}$ is a regular local ring of Krull dimension $n$, we see that  $\Lambda\lan{U_P}_\m$ is flat over $\Lambda\lan{M}_{\m'}$(e.g., \cite[Theorem 18.16]{eiscom}).
Thus $\Lambda\lan{U_P}$ is flat over $\Lambda\lan{M}$.
\end{proof}
In the last of \S \ref{polyhedral subdomain}, we introduce a logarithmic Jacobian ring.
\begin{definition} 
Let $f \in \Lambda\lan{U_P}_0$, $\lambda \in N_{\Lambda_0}$, and $\m_\lambda$ be the maximal ideal of $\Lambda\lan{M}$ corresponding to $\lambda$.
Set $\partial_{\mr}f_\lambda:=\partial_\mr f- \lan{\mr, \lambda}$.
A logarithmic Jacobian ideal $I_{f, \lambda}$ is defined by \[ I_{f, \lambda} :=\lan{\partial_\mr f_\lambda \mid \mr \in M},\]
and a logarithmic Jacobian ring $J_\lambda(f)$ is defined by $\Lambda\lan{U_P}/I_{f, \lambda}.$  
We note that $I_{f, \lambda}$ is the ideal generated by $\psi_f(\m_\lambda)$ and $\Spec(J_\lambda(f))$ is the fiber over $\lambda$.
\end{definition}

\subsection{Preliminaries on tropicalizations}\label{subtrop}
Let $\overline{\R}$ be the additive monoid $\R \cup \{\infty\}$ 
and let $\sigma \subseteq N_\R$ be a rational full-dimensional convex polyhedral cone.
We define $M_\R(\sigma^\vee)$ by the space of monoid morphisms 
\[M_\R(\sigma^\vee):=\Hom_{\R_{\ge0}}(\sigma, \overline{\R}) \]
respecting multiplication by $\R_{\ge 0}.$
Since $\sigma$ is rational, we easily see that
\[\Hom_{\R_{\ge0}}(\sigma, \overline{\R}) \cong \Hom(\sigma \cap N,  \overline{\R}),\]
where $\Hom$ is the space of monoid morphisms.
Let $\tau$ be a face of $\sigma$, $u \in M_\R/\tau^\perp$ and $v \in \sigma.$
We define $\iota(u) \in M_\R(\sigma^\vee)$ by
 \[\lan{\iota(u), v}:=
 \begin{cases}\lan{u, v} \ &\text{if} \ v \in \tau\\
                  \infty \ &\text{otherwise}. 
 \end{cases}\]  
Then this correspondence gives an isomorphism 
\[\iota : \coprod_{\tau \prec \sigma} M_\R/ \tau^\perp \xrightarrow{\simeq} M_\R(\sigma^\vee)\]
(see \cite[Proposition 3.4]{rabtro}).
Since $\sigma$ is full-dimensional, $M_\R$ is naturally contained in $M_\R(\sigma^\vee)$. 
For $\m \in \Spm(\Lambda[\sigma \cap N])$, by using $\Spm(\Lambda[\sigma \cap N]) \cong \Hom(\sigma \cap N, \Lambda),$
we denote by $\phi_\m$ the corresponding element of $\Hom(\sigma \cap N, \Lambda)$. 
We define a tropicalization morphism 
\[\trop : \Spm(\Lambda[\sigma \cap N]) \to M_\R(\sigma^\vee)\]
by 
$\trop(\m):=\val \circ \phi_\m \in M_\R(\sigma^\vee).$

Recall that  $P \subseteq M_\R$ is a polyhedron determined by $\Sigma$ and $\varphi$ with a strictly convex recession cone $\uP.$
We define $\overline{P} \subseteq M_\R(\uP)$ as follows:
Let $\tau$ be a face of $\uP^\vee$ and $u \in M_\R/\tau^\perp$.
Then $\iota(u) \in \overline{P}$ if and only if
\[\ell_i(u)=\lan{u, v_i}+\varphi(v_i) \ge 0 \ \text{for all}\  v_i \in \tau.\]

Set \[ U_P:=\{\m \in \Spm(\Lambda[\uP^\vee \cap N])\mid \trop(\m) \in \overline{P}\},\]
then we see that $\Spm(\Lambda\lan{U_P})=U_P \subseteq \Spm(\Lambda[\uP^\vee \cap N])$ (\cite[Proposition 6.9]{rabtro}).
The restriction of the tropicalization morphism $\trop$ to $U_P$ is also denoted by $\trop$.
\begin{definition}
Let $\aaa$ be an ideal of $\Lambda\lan{U_P}$ and set $V(\aaa):=\Spm(\Lambda\lan{U_P}/\aaa) \subseteq U_P$.
Then the image $\trop(V(\aaa)) \subseteq \overline{P}$ is called a tropicalization of $V(\aaa)$ and denoted by $\Trop(\aaa).$
If $\aaa$ is generated by $f \in \Lambda \lan{U_P}$, $V(\aaa)$ is also denoted by $V(f)$ and its tropicalization is denoted by $\Trop(f)$.
\end{definition}

For $c \neq 0 \in \Lambda,$ recall that the leading term $L(c) \in \C^*$ is the coefficient of $T^{\val(c)}.$
For $f=\sum c_vy^v \in \Lambda\lan{U_P}, \tau \prec \uP^\vee,$ and $u \in (M_\R/\tau^\perp) \cap \overline{P}$, 
set \[m:=\min_{v \in \tau \cap N} \big(\lan{u, v}+\val(c_v)\big).\] 
We define the initial term $\ini_u(f)$ by 
\[\ini_u(f):=\sum_{\substack{v \in \tau \cap N \\ \lan{u, v}+\val(c_v)=m}}L(c_v)y^v \in \C[\tau \cap N].\] 
By \cite[Lemma 8.4]{rabtro}, we have 
\begin{align}\label{hyp}
\Trop(f)=\{u \in \overline{P} \mid \ini_u(f) \ \text{is not a monomial}\}.
\end{align} 
\begin{remark}
It is possible that $\ini_u(f)=0.$
If $\ini_u(f)=0$, then $\ini_u(f)$ is not considered as a monomial.
\end{remark}
By the fundamental theorem of tropical geometry\cite[Theorem 7.8]{rabtro}, we have
\begin{align}\label{fundamental}
\Trop(\aaa)=\bigcap_{f \in \aaa}\Trop(f).\end{align}

 \subsection{Main theorem}\label{main}
 In \S \ref{main}, we assume that the fan $\Sigma$ is smooth.
 Let \[\Lambda\lan{Z_1,Z_2, \dots, Z_m}\] be the Tate algebra over $\Lambda$ with variables $Z_1, Z_2, \dots, Z_m$
 (the affinoid algebra associated with the polyhedron $\R^m_{\ge 0}).$  
 Let \[\psi : \Lambda\lan{Z_1,Z_2, \dots, Z_m} \to \Lambda\lan{U_P}\]
 be the continuous morphism which is defined by 
 $\psi(Z_i)=T^{\varphi(v_i)}y^{v_i}.$
 Surjectivity of $\psi$ follows from the smoothness of $\Sigma$. 
 Let \[\overline{F}=c_1 Z_1+c_2Z_2+\cdots+c_mZ_m \in \Lambda\lan{Z_1, Z_2, \dots, Z_m},\] where $c_i \in \C^*$
 and let $F$ be an element of $\Lambda\lan{Z_1, Z_2, \dots, Z_m}$ with $|F-\overline{F}|<1$.
 Set $\overline{f}=\psi(\overline{F})$ and $f=\psi(F)$.
 Take $\lambda \in N_{\Lambda_0}$.
 The next statement is the main theorem of this appendix.
 \begin{theorem}\label{maintheorem}
 Let $\Sigma$ be a smooth fan such that the support $|\Sigma|$ is a full dimensional rational polyhedral cone,
 $\lambda \in N_{\Lambda_0}$, and $f$ is as avobe.
 Then $\dim_\Lambda J_\lambda(f)=\dim_\C H^*(X_\Sigma ; \C).$ 
 \end{theorem}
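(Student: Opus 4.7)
The plan is to compute $\dim_\Lambda J_\lambda(f)$ by combining the flatness of $\psi_f$ established in Corollary \ref{flat} with an explicit identification of the special fiber $J_0(\overline{f})$ as a Jurkiewicz--Danilov type presentation of $H^*(X_\Sigma;\C)$.

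First, I would identify the classical limit. Since $\Sigma$ is smooth, every $v\in|\Sigma|\cap N$ has a unique expression as a nonnegative integer combination of the rays of its minimal cone. Combined with the multiplication rule $\overline{y}^v\cdot\overline{y}^{v'}=\overline{y}^{v+v'}$ when $v,v'$ share a cone and $0$ otherwise, this yields a graded $\C$-algebra isomorphism
\[
  \overline{\Lambda\lan{U_P}} \;\cong\; \C[x_1,\ldots,x_m]/\sr_\Sigma,\qquad x_i \mapsto \overline{y}^{v_i},
\]
where $\sr_\Sigma$ is the Stanley--Reisner ideal of $\Sigma$. Under this isomorphism, $\overline{\psi}_f(\mr) = \sum_i c_i\lan{\mr,v_i}\overline{y}^{v_i}$ corresponds to the linear form $\sum_i c_i\lan{\mr,v_i}x_i$, and the rescaling $x_i\mapsto c_i^{-1}x_i$ (which preserves $\sr_\Sigma$) identifies
\[
  J_0(\overline{f}) \;\cong\; \C[x_1,\ldots,x_m]/(\sr_\Sigma + L),\qquad L=\big(\textstyle\sum_i\lan{\mr,v_i}x_i\bigm|\mr\in M\big).
\]
For a smooth semi-projective toric variety with $|\Sigma|$ full-dimensional, this quotient equals $H^*(X_\Sigma;\C)$ by a standard Jurkiewicz--Danilov type theorem (using equivariant formality in the noncompact case), so $d:=\dim_\C J_0(\overline{f})=\dim_\C H^*(X_\Sigma;\C)$ is finite.

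Next, with $d<\infty$, graded Nakayama applied to the positively graded ring $\C[x_1,\ldots,x_m]/\sr_\Sigma$ shows that it is finite as a $\Sym(M_\C)$-module via $\overline{\psi}_f$; equivalently, $\overline{\Lambda\lan{U_P}}$ is finite over $\Sym(M_\C)$. Here I use that $v_1,\ldots,v_m$ span $N_\R$ so the linear forms $\sum_i\lan{\mr,v_i}x_i$ are genuinely $n$ independent linear generators modulo $\sr_\Sigma$. Proposition \ref{finite} then lifts this to give $\Lambda\lan{U_P}_0$ finite over $\Lambda\lan{M}_0$, and Corollary \ref{flat} supplies the flatness of $\Lambda\lan{U_P}$ over $\Lambda\lan{M}$. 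Being finite and flat over the Noetherian ring $\Lambda\lan{M}$, the algebra $\Lambda\lan{U_P}$ is locally free of some constant rank $d'$, and hence $\dim_\Lambda J_\lambda(f)=d'$ is independent of $\lambda\in N_{\Lambda_0}$.

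The last step is to identify $d'=d$ via reduction modulo $T$. Since $\Lambda\lan{U_P}_0$ is a finite torsion-free $T$-adically complete module over $\Lambda\lan{M}_0$, a $T$-adic Nakayama argument combined with the flatness established above shows that $\Lambda\lan{U_P}_0$ is topologically free of rank $d'$ over $\Lambda\lan{M}_0$; reducing modulo $T$ then gives $\overline{\Lambda\lan{U_P}}$ free of rank $d'$ over $\Sym(M_\C)$, and specializing at the origin of $\Sym(M_\C)$ recovers $J_0(\overline{f})$, of $\C$-dimension $d$, forcing $d'=d$. The main obstacle is this final rank-comparison step, which requires carefully combining the integral finiteness of Proposition \ref{finite}, the flatness of Corollary \ref{flat}, and a $T$-adic Nakayama argument to pass rigorously from ranks over $\Lambda\lan{M}$ to $\C$-dimensions of the residue; the secondary obstacle is pinning down the precise form of the Jurkiewicz--Danilov presentation in the smooth semi-projective (noncompact) setting.
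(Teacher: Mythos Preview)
Your approach diverges from the paper's in the second half. Both the paper and you first identify $\overline{\Lambda\lan{U_P}}$ with the Stanley--Reisner ring (equivalently $H^*_T(X_\Sigma;\C)$) and invoke Proposition~\ref{finite} and Corollary~\ref{flat} to get finite flatness of $\psi_f$, hence constant fiber dimension over $N_{\Lambda_0}$; this is exactly Proposition~\ref{independent}. The paper then computes that constant at a \emph{generic} $\lambda$ using tropical geometry (Theorem~\ref{main2}): for $\lambda$ with small positive valuations it locates, via tropicalizations of the $\partial_\mr\overline{f}_\lambda$, exactly one non-degenerate critical point per maximal cone, giving $\dim_\Lambda J_\lambda(f)=\#\Sigma(n)=\dim_\C H^*(X_\Sigma;\C)$. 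You instead try to read the rank off the residue ring. This is a legitimate and more direct route for the bare dimension count, while the paper's tropical route buys extra geometric information (location and non-degeneracy of critical points).

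Your final rank-comparison step, however, is not quite right as written. Corollary~\ref{flat} gives flatness of $\Lambda\lan{U_P}$ over $\Lambda\lan{M}$, not of $\Lambda\lan{U_P}_0$ over $\Lambda\lan{M}_0$, so you cannot directly conclude that $\Lambda\lan{U_P}_0$ is ``topologically free of rank $d'$'' and then reduce. A cleaner fix avoids this detour entirely: use that $\overline{\Lambda\lan{U_P}}\cong H^*_T(X_\Sigma;\C)$ is in fact \emph{free} of rank $d$ over $\Sym(M_\C)$ (equivariant formality, as cited in Proposition~\ref{independent}; or argue that the Stanley--Reisner ring is Cohen--Macaulay and the images $\overline{\psi}_f(\mr)$ form a linear system of parameters). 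Choosing a basis $\overline{y}^{w_1},\ldots,\overline{y}^{w_d}$, the proof of Proposition~\ref{finite} already shows that the lifts $T^{\varphi(w_i)}y^{w_i}$ generate $\Lambda\lan{U_P}_0$ over $\Lambda\lan{M}_0$. For linear independence, suppose $\sum_i\phi_i\,T^{\varphi(w_i)}y^{w_i}=0$ with $\phi_i\in\Lambda\lan{M}_0$ not all zero; since the value group of $\Lambda$ is $\R$, multiply by $T^{-\alpha}$ where $\alpha=\min_i(-\log|\phi_i|)$ to arrange $\max_i|\phi_i|=1$, then reduce to $\overline{\Lambda\lan{U_P}}$ to contradict freeness there. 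Hence $\Lambda\lan{U_P}_0$ is free of rank $d$ over $\Lambda\lan{M}_0$, so $\Lambda\lan{U_P}$ is free of rank $d$ over $\Lambda\lan{M}$ and every fiber $J_\lambda(f)$ has $\Lambda$-dimension $d$. This bypasses both the $T$-adic subtleties you flagged and Theorem~\ref{main2}.
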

 To prove this theorem, we first show the next proposition.
 \begin{proposition}\label{independent}
 $\dim_\Lambda J_\lambda(f)$ is independent of $\lambda \in N_{\Lambda_0}.$
 \end{proposition}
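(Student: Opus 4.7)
The plan is to establish that $\Lambda\lan{U_P}$, viewed as a $\Lambda\lan{M}$-algebra via $\psi_f$, is a finite flat module, and then conclude by constancy of rank on a connected base. Since $\Lambda\lan{M}$ is a Noetherian integral domain (a Tate algebra over $\Lambda$), a finitely generated flat $\Lambda\lan{M}$-module is projective and hence has constant rank on the irreducible scheme $\Spec(\Lambda\lan{M})$, in particular on $\Spm(\Lambda\lan{M})$. Because $J_\lambda(f) = \Lambda\lan{U_P} \otimes_{\Lambda\lan{M}} \Lambda\lan{M}/\m_\lambda$ is precisely the fiber of $\psi_f$ over $\lambda \in N_{\Lambda_0}$, its $\Lambda$-dimension equals this constant rank, yielding the claim.

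Finiteness and flatness of $\Lambda\lan{U_P}$ over $\Lambda\lan{M}$ will be obtained from Proposition \ref{finite} and Corollary \ref{flat}, which together reduce the problem to showing that $\overline{\Lambda\lan{U_P}}$ is a finite $\Sym(M_\C)$-module under $\overline{\psi}_f$. Since $|f-\overline{f}|<1$, we have $\overline{\psi}_f=\overline{\psi}_{\overline{f}}$, and a direct computation with $\overline{f}=\sum_i c_iT^{\varphi(v_i)}y^{v_i}$ gives
\[
\overline{\psi}_{\overline{f}}(\mr)\;=\;\sum_{i=1}^m c_i \lan{\mr, v_i}\,\overline{y}^{v_i}\qquad (\mr\in M).
\]
The surjectivity of $\psi$ (coming from the smoothness of $\Sigma$) together with the multiplication rule for $\overline{y}^v\cdot\overline{y}^{v'}$ identifies $\overline{\Lambda\lan{U_P}}\cong\C[Z_1,\dots,Z_m]/\sr(\Sigma)$, where $Z_i\leftrightarrow\overline{y}^{v_i}$ and $\sr(\Sigma)$ is the Stanley--Reisner ideal of $\Sigma$ (this uses that for smooth $\Sigma$ every $v\in\sigma\cap N$ has a unique non-negative integer expansion in the rays of $\sigma$, ruling out any further relations). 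Giving both sides the grading $\deg Z_i=1=\deg\mr$ makes $\overline{\psi}_{\overline{f}}$ a graded ring homomorphism, and graded Nakayama reduces finiteness of $\overline{\Lambda\lan{U_P}}$ as a $\Sym(M_\C)$-module to finite-dimensionality of
\[
\overline{\Lambda\lan{U_P}}\big/\overline{\psi}_{\overline{f}}(\Sym(M_\C)_+)\,\overline{\Lambda\lan{U_P}}\;\cong\;\C[Z_1,\dots,Z_m]\big/(\sr(\Sigma)+\mathcal{L}_{\vec{c}}),
\]
where $\mathcal{L}_{\vec{c}}$ is the ideal generated by the linear forms $\sum_i c_i\lan{\mr, v_i}Z_i$ for $\mr\in M$.

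The hard part is verifying this last finite-dimensionality. After the change of coordinates $Z_i\mapsto c_i^{-1}Z_i$ (possible since each $c_i\in\C^*$), the quotient becomes the Jurkiewicz--Danilov style presentation associated to $X_\Sigma$. For the smooth semi-projective toric variety $X_\Sigma$ (which is the setting, since $|\Sigma|$ is a full-dimensional rational polyhedral cone), this ring is finite-dimensional and is in fact isomorphic to $H^*(X_\Sigma;\C)$; this is the most delicate step, since $X_\Sigma$ need not be compact, and it rests on the general theory of semi-projective toric varieties (e.g.\ via $X_\Sigma$ deformation retracting onto the compact ``core'' toric subvariety whose cohomology gives the presentation). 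Once this is in place, Proposition \ref{finite} yields finite generation of $\Lambda\lan{U_P}_0$ over $\Lambda\lan{M}_0$ and hence of $\Lambda\lan{U_P}$ over $\Lambda\lan{M}$, Corollary \ref{flat} gives flatness, and the argument concludes as in the first paragraph.
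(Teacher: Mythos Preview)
Your argument is correct and follows essentially the same route as the paper: identify the reduction $\overline{\Lambda\lan{U_P}}$ with the Stanley--Reisner ring of $\Sigma$ (equivalently $H^*_T(X_\Sigma;\C)$), use known finiteness properties over $\Sym(M_\C)$ coming from the cohomology of the semi-projective toric variety $X_\Sigma$, and then lift via Proposition~\ref{finite} and Corollary~\ref{flat}. The only cosmetic difference is that the paper invokes freeness of $H^*_T(X_\Sigma;\C)$ over $\Sym(M_\C)$ directly, whereas you unpack this via graded Nakayama and finite-dimensionality of $H^*(X_\Sigma;\C)$; these are two faces of the same equivariant formality statement, and both rely on the same references (\cite{coxtor}, \cite{irishi}).
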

 \begin{proof}
 We recall that $f$ induces a $\C$-algebra morphism $\overline{\psi}_f : \Sym(M_\C) \to \overline{\Lambda\lan{U_P}}.$
 Using this morphism, we consider $\overline{\Lambda\lan{U_P}}$ as a module over $\Sym(M_\C).$
 Set $\overline{y'}^{v_i}:=c_i\overline{y}^{v_i}.$
 By definition, for $\mr \in M_\C$, we have $\overline{\psi}_f(\mr)=\sum_{i=1}^m \lan{\mr, v_i}\overline{y'}^{v_i}.$
 Let $X_\Sigma$ be the $n$-dimensional smooth toric variety  associated with the fan $\Sigma.$
 Then the torus $T:=N\otimes_\Z \C^*$ naturally acts on $X_\Sigma.$
 We naturally identify the equivariant cohomology $H^*_T(\mathrm{pt})$ with $\Sym(M_\C)$ as $\C$-algebras.
 We easily see that $\overline{\Lambda\lan{U_P}}$ is isomorphic to 
 the equivariant cohomology $H^*_T(X_\Sigma; \C)$ as $\Sym(M_\C)$-algebras (see, e.g., \cite[\S 12.4]{coxtor}).
 Since $H^*_T(X_\Sigma; \C)$ is a free module over $\Sym(M_\C)$ of finite rank (e.g., \cite[Proposition 2.1]{irishi} and \cite[\S 7.2]{coxtor}), 
 the $\Lambda\lan{M}$-module $\Lambda\lan{U_P}$ is finite and flat by Proposition \ref{finite} and Corollary \ref {flat}.
 Hence $\Lambda\lan{U_P}$ is a finite locally free $\Lambda\lan{M}$-module and dimensions of fibers are independent of 
 $\lambda \in N_{\Lambda_0}.$
 \end{proof}
 We next compute $\dim_\Lambda J_\lambda(f)$ at a specific point $\lambda \in N_{\Lambda_0}$ (Theorem \ref{main2}).
 \begin{lemma}
 Let $\mr \in M, \tau \prec |\Sigma|$ and $u \in M_\R/\tau^\perp$.
 Then $u \in \Trop(\partial_\mr f_\lambda)$ if and only if $u \in \Trop(\partial_\mr \overline{f}_{\lambda})$.
 \end{lemma}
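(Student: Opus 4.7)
The plan is to prove the stronger statement $\ini_u(\partial_\mr f_\lambda) = \ini_u(\partial_\mr \overline{f}_\lambda)$ for every $u\in (M_\R/\tau^\perp)\cap\overline{P}$; combined with the characterization~\eqref{hyp} of tropicalizations via initial forms, this immediately yields the equivalence. The main idea is that although $f$ differs from $\overline{f}$ by monomial corrections at many $v\in|\Sigma|\cap N$, the hypothesis $|F-\overline{F}|<1$ forces each correction to have a \emph{uniform} strict valuation gap, and smoothness of $\Sigma$ then forces those extra contributions to lie strictly above the minimum already realized at the ray generators.

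First I would unpack the hypothesis. Writing $f=\sum_v c_vy^v$ and setting $c'_v:=c_iT^{\varphi(v_i)}$ if $v=v_i$ and $c'_v:=0$ otherwise, the norm formula for $\Lambda\lan{U_P}$ together with $\min_{u\in P}\lan{u,v}=-\varphi(v)$ produces a uniform $\epsilon>0$ with
\[
\val(c_v-c'_v)\;\ge\;\varphi(v)+\epsilon\qquad\text{for every } v\in|\Sigma|\cap N.
\]
In particular $\val(c_{v_i})=\varphi(v_i)$ with $L(c_{v_i})=c_i$, while $\val(c_v)\ge\varphi(v)+\epsilon$ whenever $v\notin\{v_1,\ldots,v_m\}$. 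Since $\lan{\mr,0}=0$, the $y^0$-coefficient of both $\partial_\mr f_\lambda$ and $\partial_\mr \overline{f}_\lambda$ equals $-\lan{\mr,\lambda}$, and the difference $\partial_\mr f_\lambda-\partial_\mr\overline{f}_\lambda$ has $y^v$-coefficient $(c_v-c'_v)\lan{\mr,v}$ for $v\ne 0$.

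Next, fix $u\in(M_\R/\tau^\perp)\cap\overline{P}$ and examine $\lan{u,v}+\val(\mathrm{coef}_v)$ at each $v\in\tau\cap N$. At $v=v_i$ with $\lan{\mr,v_i}\ne 0$, both expressions give $\ell_i(u)\ge 0$ with identical leading coefficient $c_i\lan{\mr,v_i}$; at $v=0$ with $\lan{\mr,\lambda}\ne 0$, both give $\val(\lan{\mr,\lambda})$ with identical leading term $-L(\lan{\mr,\lambda})$; at any remaining $v\in\tau\cap N$ with $\lan{\mr,v}\ne 0$ the term is absent from $\partial_\mr\overline{f}_\lambda$, while in $\partial_\mr f_\lambda$ it contributes a value of at least $\ell(u,v)+\epsilon$, where $\ell(u,v):=\lan{u,v}+\varphi(v)$.

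The decisive step is to show these ``extra'' contributions never attain the minimum $m_{\overline{f}}$ of $\partial_\mr\overline{f}_\lambda$. Smoothness of $\Sigma$ gives $\tau\cap N=\sum_{v_i\in\tau}\Z_{\ge 0}v_i$, so any extra $v$ is $\sum_i a_iv_i$ with $a_i\in\Z_{\ge 0}$ and $\sum a_i\ge 2$; by linearity of $\varphi$ on $\tau$, $\ell(u,v)=\sum_i a_i\ell_i(u)$ with each $\ell_i(u)\ge 0$. From $\lan{\mr,v}=\sum a_i\lan{\mr,v_i}\ne 0$ one selects $i_0$ with $a_{i_0}\ge 1$ and $\lan{\mr,v_{i_0}}\ne 0$, so $\ell(u,v)\ge a_{i_0}\ell_{i_0}(u)\ge\ell_{i_0}(u)$, and hence
\[
\ell(u,v)+\epsilon\;>\;\ell_{i_0}(u)\;\ge\;m_{\overline{f}}.
\]
Therefore the minima, the $v$'s achieving them, and the leading coefficients at those $v$'s all match for $\partial_\mr f_\lambda$ and $\partial_\mr\overline{f}_\lambda$, so $\ini_u(\partial_\mr f_\lambda)=\ini_u(\partial_\mr\overline{f}_\lambda)$; in particular one is a monomial iff the other is. I expect the only subtle point to be this final inequality, which simultaneously uses the uniform valuation gap supplied by $|F-\overline{F}|<1$ and smoothness of $\tau$ to decompose $v$ as a nonnegative integral combination of ray generators.
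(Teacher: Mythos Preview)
Your approach is essentially the paper's: both show that the correction terms coming from $F-\overline{F}$ never achieve the minimum defining the initial form, by decomposing each relevant $v\in\tau\cap N$ into ray generators and using $\sum a_i\ell_i(u)\ge\ell_{i_0}(u)$. The paper works monomial-by-monomial upstairs in $\Lambda\langle Z_1,\dots,Z_m\rangle$, where $\varphi_{\vec b}:=\sum b_i\varphi(v_i)$ is a definition, while you pass to the aggregated coefficients $c_v$ downstairs in $\Lambda\langle U_P\rangle$ and then decompose $v$.

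There is one step that, as written, does not go through. You invoke ``linearity of $\varphi$ on $\tau$'' to get $\ell(u,v)=\sum_i a_i\ell_i(u)$, but $\tau$ is merely a face of $|\Sigma|$, not a cone of the fan $\Sigma$, and $\varphi$ is in general only piecewise linear on $\tau$. For an arbitrary decomposition $v=\sum a_iv_i$ with $v_i\in\tau$ convexity gives only $\varphi(v)\le\sum a_i\varphi(v_i)$, hence $\ell(u,v)\le\sum a_i\ell_i(u)$, which is the wrong direction for your bound. The fix is easy: since $\tau$ is a face of $|\Sigma|$, for any cone $\sigma\in\Sigma$ the intersection $\sigma\cap\tau$ is again a (smooth) cone of $\Sigma$; taking $\sigma$ to contain $v$ you may choose the decomposition $v=\sum_{v_i\in\sigma\cap\tau}a_iv_i$, and linearity of $\varphi$ on $\sigma\cap\tau$ then yields the needed equality. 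The paper's upstairs formulation avoids this subtlety altogether, since there one works with $\varphi_{\vec b}$ rather than $\varphi(v_{\vec b})$.
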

 \begin{proof}
 For $G \in \Lambda\lan{Z_1,Z_2,\dots, Z_m}$, we set \[\partial_\mr G:=\sum_{i=1}^m\lan{\mr, v_i}Z_i \frac{\partial G}{\partial{Z_i}}.\]
 Then we see that $\partial_\mr G \in \Lambda\lan{Z_1,Z_2,\dots, Z_m}$ and $\psi(\partial_\mr G)=\partial_\mr\psi(G)$.
 Choose a monomial $Z^{\bv}:=Z_1^{b_1} Z_2^{b_2} \cdots Z_m^{b_m}$, where $b_1, b_2, \dots, b_m \in \Z_{\ge 0}$. 
 Set 
 \[v_{\bv}:=b_1v_1+b_2v_2+\cdots+b_mv_m, \ \varphi_{\bv}:=b_1\varphi(v_1)+b_2\varphi(v_2)+\cdots+b_m \varphi(v_m).\]
 Then we have
 \begin{align*}
 \psi( \partial_\mr Z^{\bv})=\lan{\mr, v_{\bv}}T^{\varphi_{\bv}} y^{v_{\bv}}.
 \end{align*}
 Suppose that $v_{\bv} \in \tau$ and $\partial_\mr Z^{\bv} \neq 0$,  which implies $b_i=0$ for $v_i \notin \tau$.
 Then there exists $v_j \in \tau$ with $\lan{\mr, v_j} \neq 0$ and $b_j \neq 0$.
 We see that 
 \[\lan{u, v_{\bv}}+\varphi_{\bv}=\sum_{v_i \in \tau} b_i \ell_i(u) \ge \ell_j(u).\]
 Since $|F-\overline{F}| < 1$, the valuation of each coefficient of $F-\overline{F}$ is positive.
 Hence $Z^{\bv}$-term of $F-\overline{F}$ does not contribute to $\Trop(\partial_\mr f_\lambda)$. 
 \end{proof}
 Combining this lemma with Equations (\ref{hyp}) and (\ref{fundamental}), it follows that 
 \begin{align}
 \Trop\big(J_{\lambda}(f)\big) \subseteq \bigcap_{\mr \in M}\Trop(\partial_\mr \overline{f}_{\lambda}). \nonumber
 \end{align}
 For each maximal cone $\sigma \in \Sigma(n)$,  let $e_1^\sigma, e_2^\sigma, \dots, e_n^\sigma$ be the integral basis of $N$ which   
 generate $\sigma$ and let $f_1^\sigma, f_2^\sigma, \dots f_n^\sigma$ be the dual basis. 
 Set \[\lambda_i^\sigma:=\lan{f_i^\sigma, \lambda} \in \Lambda_0.\]
         
 For each positive dimensional cone $\tau  \in \Sigma\setminus \{\{0\}\}$ and 
 $\epsilon \in \R_{>0},$ set 
 \begin{align*}I_\tau &:=\{v_i \mid \tau \ \text{and}\  v_i \ \text{do not span any cone in} \ \Sigma \},\\
                 P_\tau&:=\{u \in P \mid \ell_i(u) \le \ell_j(u) \ \text{for all} \ v_i \in \tau \ \text{and} \ v_j \in I_\tau\}, \\
                 V_{\tau, \epsilon}&:=\{u \in P \mid \ell_i(u) \le \epsilon \ \text{for all}\ v_i \in \tau\}.\end{align*}
 For $\tau \in \Sigma \setminus \{\{0\}\}$ with $I_\tau \neq \emptyset$, we also set         
 \begin{align*}
 \epsilon_\tau&:=\inf_{v_i \in I_\tau, u \in P_\tau}\ell_i(u), \\
 \epsilon_P&:=\min\{\epsilon_\tau \mid \tau \in \Sigma \setminus \{\{0\}\} \text{ with } I_\tau \neq \emptyset \}.
 \end{align*}
 We easily see that $\epsilon_P>0$ and $V_{\tau, \epsilon_\tau} \subseteq P_\tau$.
 \begin{theorem}\label{main2}
 Suppose that $\epsilon_P>\val(\lambda_i^\sigma)>0$ for all $\sigma \in \Sigma(n)$ and $i$.
 For $\sigma \in \Sigma(n)$, we define $u_\lambda^\sigma \in \inte P$ by $\ell_i(u)=\val(\lambda_i ^\sigma) \ (v_i \in \sigma)$.
 Then 
 \[\Trop(J_\lambda(f))=\{u_\lambda^\sigma\mid \sigma \in \Sigma(n)\}\]
 and, for each $\sigma \in \Sigma(n)$, 
 there exists a unique critical point $\m_\lambda^\sigma \in \Spm(J_\lambda(f))$ with $\trop(\m^\sigma_\lambda)=u_\lambda^\sigma$. 
 Moreover, these critical points $\m^\sigma_\lambda$ are non-degenerate.  
 \end{theorem}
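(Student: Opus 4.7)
The plan is to exploit the smoothness of $\Sigma$ to reduce, chart by chart, the system $\partial_{\mathrm{m}} f = \langle \mathrm{m},\lambda\rangle$ to a system with an invertible leading Jacobian, and then apply the non-archimedean implicit function theorem. By the lemma immediately preceding the theorem, $\Trop(\partial_{\mathrm{m}} f_\lambda) = \Trop(\partial_{\mathrm{m}} \overline{f}_\lambda)$, so by $(\ref{fundamental})$ we have
\[
\Trop(J_\lambda(f)) \;\subseteq\; \bigcap_{\mathrm{m} \in M} \Trop(\partial_{\mathrm{m}}\overline{f}_\lambda),
\]
and the question becomes one of deciding when $\ini_u(\partial_{\mathrm{m}}\overline{f}_\lambda)$ is a monomial.

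For each maximal cone $\sigma \in \Sigma(n)$, smoothness gives that $\{e_i^\sigma\}$ is a $\Z$-basis of $N$ with dual basis $\{f_i^\sigma\}$, and the monomials $T^{\varphi(e_i^\sigma)} y^{e_i^\sigma}$ freely generate the Tate algebra $\Lambda\langle U_\sigma\rangle$. Specializing $\mathrm{m} = f_i^\sigma$ gives
\[
\partial_{f_i^\sigma}\overline{f}_\lambda
= c_i T^{\varphi(e_i^\sigma)} y^{e_i^\sigma}
+ \sum_{v_j \notin \sigma}\langle f_i^\sigma,v_j\rangle c_j T^{\varphi(v_j)} y^{v_j}
- \lambda_i^\sigma.
\]
From $\val(\lambda_i^\sigma) < \epsilon_P$ together with $V_{\sigma,\epsilon_\sigma} \subseteq P_\sigma$ we first deduce $u_\lambda^\sigma \in P_\sigma$, and then $\ell_j(u_\lambda^\sigma) \geq \epsilon_\sigma \geq \epsilon_P > \val(\lambda_i^\sigma)$ for all $v_j \in I_\sigma$; the correction terms coming from $F - \overline{F}$ have yet higher valuation. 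Hence $\ini_{u_\lambda^\sigma}(\partial_{f_i^\sigma}\overline{f}_\lambda) = c_i y^{e_i^\sigma} - L(\lambda_i^\sigma)$, a non-monomial binomial, so $u_\lambda^\sigma$ will indeed lie in $\Trop(J_\lambda(f))$ once an actual critical point is produced above it. To produce one, I would set $y^{e_i^\sigma} = c_i^{-1}\lambda_i^\sigma T^{-\varphi(e_i^\sigma)}(1 + w_i)$ with unknowns $w_i$ of positive valuation; the equations $\partial_{f_i^\sigma} f = \lambda_i^\sigma$ then take the form $w_i = \Phi_i(w_1,\ldots,w_n)$ with each $\Phi_i$ contracting on $\Lambda_+^n$, and Hensel's lemma (non-archimedean Newton iteration) yields a unique solution $\mathfrak{m}_\lambda^\sigma$ with $\trop(\mathfrak{m}_\lambda^\sigma) = u_\lambda^\sigma$. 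Non-degeneracy follows because the leading Jacobian, diagonal with entries $c_i T^{\varphi(e_i^\sigma)}$, is invertible.

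The main obstacle will be the reverse inclusion. Suppose $u \in \overline{P}$ with $u \neq u_\lambda^\sigma$ for every $\sigma \in \Sigma(n)$; I must exhibit $\mathrm{m} \in M$ with $\ini_u(\partial_{\mathrm{m}}\overline{f}_\lambda)$ a monomial. The plan is a case analysis. If $u \in \iota(M_\R/\tau^\perp)$ for a proper face $\tau \prec |\Sigma|$ (so $u \in \overline{P}\setminus P$), choose $\sigma' \in \Sigma(n)$ with $\tau \subsetneq \sigma'$ and an index $i$ with $e_i^{\sigma'} \notin \tau$; the term $c_i T^{\varphi(e_i^{\sigma'})} y^{e_i^{\sigma'}}$ in $\partial_{f_i^{\sigma'}}\overline{f}_\lambda$ has valuation $\infty$ at $u$, while $-\lambda_i^{\sigma'}$ has finite valuation, and a short verification using $\epsilon_P > \val(\lambda_i^{\sigma'})$ shows that the constant strictly dominates any surviving contribution from $v_j \in \tau$. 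If instead $u \in P$, one chooses $\sigma$ with $u \in P_\sigma$; since $u \neq u_\lambda^\sigma$ there is an $i$ with $\ell_i(u) \neq \val(\lambda_i^\sigma)$, and taking $\mathrm{m} = f_i^\sigma$ one splits into the subcases $\ell_i(u) < \val(\lambda_i^\sigma)$ and $\ell_i(u) > \val(\lambda_i^\sigma)$; in each case the characterizing inequality $\ell_j(u) \geq \ell_i(u)$ of $P_\sigma$ forces a unique valuation-minimum term, yielding a monomial initial form. The delicate step is the behaviour on the walls of the $P_\sigma$-decomposition, where several $\ell_j(u)$'s can tie: it is precisely the strict inequality $\epsilon_P > \val(\lambda_i^\sigma)$ that prevents an accidental extra tropical critical point there. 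Combined with the existence, uniqueness and non-degeneracy established above, this gives the theorem.
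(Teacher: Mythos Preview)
Your Hensel/Newton argument for existence, uniqueness and non-degeneracy is fine and amounts to the content of \cite[Theorem 11.7]{rabtro}, which the paper simply cites. The gaps are in the reverse inclusion. In the boundary case, $\tau$ is a face of the support $|\Sigma|$, not a cone of the fan, so in general there is no $\sigma'\in\Sigma(n)$ with $\tau\subsetneq\sigma'$; moreover your ``short verification'' that the constant dominates the surviving $v_j\in\tau$ terms does not go through, since those $\ell_j(u)$ can be arbitrarily small. The paper instead chooses $\mathrm{m}\in\tau^\perp$ with $\langle\mathrm{m},\lambda\rangle\neq 0$, which kills every $v_j\in\tau$ term outright and leaves the nonzero constant as the initial form.

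In the interior case you assume $u\in P_\sigma$ for some maximal $\sigma$, but the regions $P_\sigma$ do not cover $P$: already for the fan with rays $(1,0),(1,1),(0,1)$ and maximal cones $\langle(1,0),(1,1)\rangle$, $\langle(1,1),(0,1)\rangle$, any $u$ with $\ell_2(u)$ strictly largest lies in neither $P_\sigma$. The paper's key device is an inductive construction that \emph{uses} the hypothesis $u\in\Trop(J_\lambda(f))$ at each step. One first picks $v_{i_1}$ minimizing $\ell_i(u)$ and applies the tropical condition to $\partial_{f_i^\sigma}\overline{f}_\lambda$ with $\langle f_i^\sigma,v_{i_1}\rangle=1$ to force $\ell_{i_1}(u)\le\val(\lambda_i^\sigma)<\epsilon_P$, hence $u\in P_{\langle v_{i_1}\rangle}$; then among rays spanning a cone with $v_{i_1}$ one picks the next smallest and repeats, each time invoking the hypothesis to bound the new $\ell$ below $\epsilon_P$. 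This builds $\sigma_u\in\Sigma(n)$ with $\ell_i(u)<\epsilon_P$ for $v_i\in\sigma_u$ and $\ell_j(u)\ge\epsilon_P$ otherwise, after which $\Trop(\partial_{f_i^{\sigma_u}}\overline{f}_\lambda)$ pins $u$ to $u_\lambda^{\sigma_u}$. Your contrapositive case split does not supply a substitute for this inductive use of the hypothesis.
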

 \begin{proof}
 Let $u \in \Trop(J_\lambda(f))$.
 We first show that $u \in P$.
 Suppose that $u \in M_\R /\tau^\perp$ for some proper face $\tau \prec |\Sigma|$.
 Take $\mr \neq 0 \in \tau^\perp$ with $\lan{\mr, \lambda} \neq 0.$ 
 Then only the constant term $\lan{\mr, \lambda}$ contributes to $\ini_u(\partial_\mr \overline{f}_\lambda)$. 
 This contradicts $u \in \Trop(J_\lambda(f))$.

 Suppose that $u \in P \cap \Trop(J_\lambda(f))$. 
 Choose $v_{i_1}$ with $\ell_{i_1}(u) \le \ell_i(u)$ for all $i=1,2, \dots, m$ and let $\tau_1$ be the cone in $\Sigma$ spaned by $v_{i_1}.$
 By considering $\Trop(\partial_{f_i^\sigma} \overline{f}_\lambda)$ for some $\sigma \in \Sigma(n)$ and $i$ with $\lan{f^\sigma_i, v_{i_1}}=1$,
 we see that $\ell_{i_1}(u)  \le \val(\lambda_i^\sigma)< \epsilon_P \le \epsilon_{\tau_1}$, which implies $u \in P_{\tau_1}$.
 Choose $v_{i_2}$ with $\ell_{i_2}(u) \le \ell_i(u)$ for all $v_i \notin I_{\tau_1} \cup \{v_{i_1}\}$.
 We note that $\epsilon_{\tau_1} \le \ell_i(u)$ for all $v_i \in I_{\tau_1}$.
 By considering $\Trop(\partial_{f_i^\sigma} \overline{f}_\lambda)$ for some $\sigma \in \Sigma(n)$ and $i$ with 
 $\lan{f^\sigma_i, v_{i_1}}=0, \lan{f^\sigma_i, v_{i_2}}=1$, we see that $\ell_{i_2}(u)  \le \val(\lambda_i^\sigma)< \epsilon_P.$
 Inductively, we can construct a maximal cone $\sigma_u \in \Sigma(n)$ such that $\ell_i(u)<\epsilon_P$ for all $v_i \in \sigma_u$.
 We note that $\ell_i(u) \ge \epsilon_P$ if $v_i \notin \sigma_u$.
 We can assume $\lan{f_i ^{\sigma_u}, v_j}=\delta_{i,j} \ (i,j=1,2, \dots n)$.
 Then we have 
 \[\trop(\partial_{f_i^{\sigma_u}}\overline{f}_\lambda)=\{ u \in P \mid \ell_i(u)=\val(\lambda_i^{\sigma_u})\},\]
 which implies that \[\Trop(J_\lambda(f)) \subseteq \{u_\lambda^\sigma\mid \sigma \in \Sigma(n)\}.\]
 On the other hand, 
 by \cite[Theorem 11.7]{rabtro}, 
 it follows that for each $\sigma \in \Sigma(n)$ there exists a unique critical point $\m \in \Spm(J_\lambda(f))$ with    $\trop(\m)=u^{\sigma}_\lambda$ and these critical points are non-degenerate.
 \end{proof}
 Combining Proposition \ref{independent} and Theorem \ref{main2}, we complete the proof of Theorem \ref{maintheorem}.

 \section{Appendix  B}\label{variant}
 In \S \ref{variant}, we introduce a variant of a category of matrix factorizations of Givental type potentials (Definition \ref{variantmf}).
In this section, we assume that the base field $k$ is equal to $\C$.
Let $R, f, g_1, g_2, \dots g_r$ be the same as \S \ref{givental}, i.e., $R$ is a commutative regular  $\C$-algebra of Krull dimension $n$,
$f \in R$, and $g_1, g_2, \dots, g_r $ are invertible elements of $R.$
 Set
\[T:=R/\lan{g_1-1, g_2-1, \dots, g_r-1}.\]
Suppose that $dg_1, dg_2, \dots, dg_r$ are linearly independent at each point of $\Spec(T) \subseteq \Spec(R)$, which implies that $T$ is also regular.
We denote by $f_T$ the restriction of $f$ to $\Spec(T)$.
We will give a relationship between $[\overline{\br(f_T)}]$ and a matrix factorization category for a Givental type potential function.

Let $X$ be the analytification of $\Spec(R)$, $Y \subseteq X$ be the analytification of $\Spec(T)$, and $\C^r_\lambda$ be the $r$-dimensional complex plane with coordinates $\lambda_1, \lambda_2, \dots, \lambda_r$.
For $\epsilon \in \R_{>0}$, set \[U:=\{x \in X \mid |g_1(x)-1|<\epsilon,|g_2(x)-1|<\epsilon, \dots, |g_r(x)-1|<\epsilon\}.\]
By taking $\epsilon$ enough small, we can choose the branch of $\log g_i$ on $U$ such that $\log g_i =0$ along $Y$. 
We define $F^\an$ by 
\[F^\an:=f-\lambda_1 \log g_1-\lambda_1 \log g_2- \cdots - \lambda_r \log g_r.\]
This is an element of the ring $\ano_{U \times \C^r_\lambda}$ of holomorphic functions on $U \times \C^r_\lambda$.
Let $\pr_1$ be the projection from $X \times \C^r_\lambda$ to $X$ and $i$ be the inclusion of $X \cong X \times \{0\}$ to $X \times \C^r_\lambda$.
By the method of Lagrange multipliers, for $y \in \crit(f_T)$, there exists a unique critical point $L(y) \in \crit(F^\an)$ with $\pr_1(L(y))=y$
and this map $L$ gives an isomorphism $\crit(f_T) \cong \crit(F^\an)$.
For a complex manifold $Z$ with a holomorphic function $h \in \ano_Z$, a point $p \in Z$,
 and a coordinate system $z_1, z_2, \dots z_n$ near $p$,  we define the Jacobian algebra and the Tyurina algebra of $h$ at $p$ by 
\begin{align*}
J(h)_p:=&\ano_{Z, p}/\lan{\partial_{z_1}h, \partial_{z_2}h, \dots, \partial_{z_n}h}, \\
T(h)_p:=&\ano_{Z, p}/\lan{h, \partial_{z_1}h, \partial_{z_2}h, \dots, \partial_{z_n}h},
\end{align*}
where $\ano_{Z, p}$ is the ring of analytic germs of the holomorphic functions at $p$.
The formal Taylor expansion of $h$ at $p$ is denoted by $\hat{h}_p$.
This is an element of the formal completion $\hat{\ano}_{Z, p}.$

For $y \in \crit (f_T)$, take an open neighborhood $V \subseteq U \subseteq X$ of $y$ and a coordinate system $t_1, t_2, \dots, t_n$ on $V$ with $t_1=\log g_1, t_2=\log g_2, \dots, t_r=\log g_r$ and $t_1(y)=\cdots=t_n(y)=0.$ 
Using this coordinate system, we easily see that 
\[J(F^\an)_{L(y)} \cong J(f_T)_y.\]
Hence $L(y)$ is an isolated singular point of $F^\an$ if and only if $y$ is an isolated singular point of $f_T$.
For an isolated singular point $y \in \crit(f_T)$, we will show that 
\[[\mf(\hat{f}_{T, y})] \cong [\mf(\hat{F}^\an_{L(y)})].\]
Using the coordinate system $t_1, t_2, \dots, t_n$ near $y \in \crit(f_T)$,
we define $F^\an_T \in \ano_{V \times \C^r_\lambda, i(y)}$ by 
\[f_T-\lambda_1 t_1-\lambda_2 t_2-\cdots-\lambda_r t_r.\]
By easy computation, we see that 
\[T(F^\an)_{L(y)} \cong T(F^\an_T)_{i(y)} \cong T(f_T)_y.\]
By a theorem of Mather and Yau \cite{matcla}, we see that 
\[\ano_{V \times \C^r_\lambda, L(y)}/ F^\an \cong \ano_{V \times \C^r_\lambda, i(y)}/ F^\an_T.\]
Combining with \cite[Theorem 2.10]{orlfor}, we have
\[[\mf(\hat{F}^\an_{L(y)})] \cong [\mf(\hat{F}_{T, i(y)})].\]
On the other hand, by the Kn\"{o}rrer periodicity (\cite[Theorem 3.1]{knocoh}, see also \cite[\S 2.1]{dyccom}), we have
\[[\mf(\hat{F}_{T, i(y)})] \cong [\mf(\hat{f}_{T, y})].\]
Thus we see that   
\[[\mf(\hat{f}_{T, y})] \cong [\mf(\hat{F}^\an_{L(y)})].\]

\begin{definition}\label{variantmf}
Suppose that $f_T$ has isolated singularities.
We define a triangulated category of matrix factorizations of $F^\an$ by
\[\overline{[\br^{I\hspace{-.1em}I}(F^\an)]}:=\prod_{p \in \crit(F^\an)}[\mf(\hat{F}^\an_p)]\] 
\end{definition}
\begin{proposition}[cf. {\cite[Theorem 1.2]{hirder}}]
Suppose that $f_T$ has isolated singularities.
Then we have 
\[\overline{[\br^{I\hspace{-.1em}I}(F^\an)]} \cong \overline{[\br(f_T)]}.\]
\end{proposition}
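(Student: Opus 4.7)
The plan is to reduce the statement to the local equivalences at each singular point that were established just above the proposition. Recall that by Dyckerhoff's result reviewed in Section \ref{mf}, if $S$ has isolated singularities then
\[
\overline{[\mf(w)]} \cong \prod_{\m \in \sing(S)} [\mf(\hat{w}_\m)],
\]
where $S = R/w$. Applying this to each fibre of $\br(f_T) = \prod_{c \in \C} \mf(f_T - c)$ and noting that a maximal ideal $\m$ lies in $\sing(T/(f_T-c))$ exactly when $\m$ corresponds to a critical point $y \in \crit(f_T)$ with critical value $f_T(y)=c$, we obtain a decomposition
\[
\overline{[\br(f_T)]} \cong \prod_{y \in \crit(f_T)} [\mf(\hat{f}_{T,y})],
\]
where each $\hat{f}_{T,y}$ is regarded as living in the maximal ideal by subtracting its critical value.

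First I would verify that the analytic critical locus $\crit(F^\an)$ is precisely the image of $\crit(f_T)$ under the map $L$ introduced in the paragraph before Definition \ref{variantmf}, and that $F^\an$ has an isolated singularity at $L(y)$ iff $f_T$ has one at $y$; this was already observed in the text via the isomorphism $J(F^\an)_{L(y)} \cong J(f_T)_y$. So under our hypothesis the product defining $\overline{[\br^{I\hspace{-.1em}I}(F^\an)]}$ is indexed by the same finite set $\crit(f_T) \cong \crit(F^\an)$.

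The heart of the argument is the chain of local equivalences assembled just before the proposition:
\[
[\mf(\hat{F}^\an_{L(y)})] \xrightarrow{\sim} [\mf(\hat{F}^\an_{T,i(y)})] \xrightarrow{\sim} [\mf(\hat{f}_{T,y})],
\]
where the first equivalence uses the Mather--Yau theorem together with \cite[Theorem 2.10]{orlfor}, and the second is Kn\"orrer periodicity applied to the quadratic (indeed linear in the new variables) terms $-\lambda_i t_i$. I would package these pointwise equivalences into an equivalence of products
\[
\prod_{p \in \crit(F^\an)} [\mf(\hat{F}^\an_p)] \;\cong\; \prod_{y \in \crit(f_T)} [\mf(\hat{f}_{T,y})],
\]
and then compose with the Dyckerhoff decomposition of $\overline{[\br(f_T)]}$ displayed above.

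The main obstacle is bookkeeping: one must check that the Mather--Yau and Kn\"orrer equivalences, which are established at formal completions, restrict compatibly from the analytic germ $F^\an$ to the algebraic germ $\hat{f}_{T,y}$, and that the resulting equivalences $[\mf(\hat{F}^\an_{L(y)})] \cong [\mf(\hat{f}_{T,y})]$ do \emph{not} depend on the choice of local coordinate $t_1,\dots,t_n$ with $t_i = \log g_i$ for $i \le r$. Any two such choices are related by a coordinate change fixing $\log g_1,\dots,\log g_r$ and restricting to a local biholomorphism of the fibre $\Spec T$ near $y$; the induced automorphisms on both sides of the equivalence coincide, so the resulting triangulated equivalence is canonical. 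Once this compatibility is in place, taking the product over $\crit(f_T)$ yields the desired equivalence $\overline{[\br^{I\hspace{-.1em}I}(F^\an)]} \cong \overline{[\br(f_T)]}$.
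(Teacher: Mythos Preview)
Your proposal is correct and follows exactly the approach implicit in the paper: the proposition is stated without a separate proof because it follows immediately from the local equivalences $[\mf(\hat{f}_{T,y})] \cong [\mf(\hat{F}^{\an}_{L(y)})]$ established in the paragraphs preceding Definition~\ref{variantmf}, the bijection $L\colon \crit(f_T)\xrightarrow{\sim}\crit(F^{\an})$, and the product decomposition $\overline{[\br(f_T)]}\cong\prod_{y\in\crit(f_T)}[\mf(\hat{f}_{T,y})]$ from \S\ref{mf}. Your only addition is the discussion of independence of the choice of coordinates $t_1,\dots,t_n$; since the proposition asserts merely the existence of an equivalence of triangulated categories and not any naturality, this check is not needed---any choice of coordinates produces \emph{some} equivalence, and that suffices.
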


\end{document}